\documentclass[11pt]{article}

\usepackage{amsmath,amssymb,amsthm,amsfonts}
\usepackage{mathtools}
\usepackage{mathrsfs}
\usepackage{bm}
\usepackage{graphicx}
\usepackage{float}
\usepackage{multirow}
\usepackage{booktabs}
\usepackage{longtable}
\usepackage{arydshln}
\usepackage{enumerate}
\usepackage{cite}

\usepackage{hyperref}
\usepackage[a4paper,margin=2.5cm]{geometry}
\hypersetup{colorlinks=true,linkcolor=blue,citecolor=blue,urlcolor=blue,
  pdftitle={Beurling-Type Theorem in Weighted Bergman Spaces},
  pdfauthor={Zhaopeng Lin, Shibo Xu, Tao Yu}}
\allowdisplaybreaks[2]
\numberwithin{equation}{section}
\newtheorem{theorem}{Theorem}[section]
\newtheorem{lemma}[theorem]{Lemma}
\newtheorem{proposition}[theorem]{Proposition}
\newtheorem{corollary}[theorem]{Corollary}

\newtheorem{remark}[theorem]{Remark}

\newcommand{\D}{\mathbb D}
\newcommand{\R}{\mathbb R}
\newcommand{\C}{\mathbb C}
\newcommand{\Z}{\mathbb Z}
\newcommand{\Hh}{\mathcal H_\alpha}
\newcommand{\Mh}{\mathcal M_h}

\DeclareMathOperator{\spanop}{span}
\DeclareMathOperator{\Ran}{Ran}
\DeclareMathOperator{\Hol}{Hol}
\DeclareMathOperator{\Res}{Res}
\DeclareMathOperator{\sech}{sech}

\title{\Large\bfseries The Critical Value for the Beurling-Type Theorem in Weighted Bergman Spaces
\thanks{This work was supported by the National Natural Science Foundation
of China (Grant No.~ 12571132).}}

\author{Zhaopeng Lin, Shibo Xu\thanks{Corresponding author.}, and Tao Yu}

\date{}
\begin{document}
\maketitle
\vspace{-0.8cm}
\begin{center}
\begin{minipage}{14cm}\small
\noindent\textbf{Abstract}\quad
For every $\alpha>1$ we construct a finite set $A\subset\D$ such that the
zero-based invariant subspace $I_A$ fails the wandering subspace property.  Consequently, the Beurling-type theorem holds on the weighted Bergman space
\(A^2_\alpha\) if and only if \(-1<\alpha\le1\), confirming a conjecture of Shimorin. As a further application,  we extend the prescribed-curvature construction of
Hedenmalm and Perdomo to all $\alpha>1$, thereby replacing the constant
$\alpha_0\approx1.04$ in their result by $1$.

\medskip
\noindent\textbf{Mathematics Subject Classification (2020).}\quad
Primary 47A15; Secondary 30H20, 46E22.

\medskip
\noindent\textbf{Keywords.}\quad
Weighted Bergman spaces, wandering subspaces, zero-based invariant
subspaces,  extraneous zeros.
\end{minipage}
\end{center}

\section{Introduction}\label{sec:introduction}

Let $T$ be a bounded operator on a Hilbert space $\mathcal H$, and let
$\mathcal M$ be a closed $T$-invariant subspace. Its wandering subspace is
\[
   \mathcal M\ominus T\mathcal M.
\]
For $\mathcal E\subset\mathcal H$, let $ [\mathcal E]_T$ denote the $T$-invariant subspace generated by $\mathcal E$, i.e.
\[
   [\mathcal E]_T
   =
   \overline{\operatorname{span}}
   \{T^n f:n\ge0,\ f\in\mathcal E\}.
\]
We say that $\mathcal M$ has the wandering subspace property if
\begin{equation}\label{wsp}
   \mathcal M=[\mathcal M\ominus T\mathcal M]_T.
\end{equation}
The Beurling-type theorem is said to hold for $T$ on $\mathcal H$ if
\eqref{wsp} holds for every closed $T$-invariant subspace $\mathcal M$.

Shift operators  appear naturally in the Wold--Kolmogorov decomposition. For a pure isometry, the operator is
unitarily equivalent to a unilateral shift. Shimorin showed that
related Wold-type questions for a broader class of operators can also
be studied by means of shift operators acting on Hilbert spaces of
analytic functions; see \cite{Shimorin2001}. This leads naturally to
the Beurling-type theorem for the shift operator
\[
 T_zf(z)=zf(z)
\]
on Hilbert spaces of analytic functions.

Let \(\D=\{z\in\C:|z|<1\}\). For the Hardy space \(H^2(\D)\), the
Beurling-type theorem follows from the classical Beurling theorem
\cite{B}. Indeed, every nonzero closed \(T_z\)-invariant subspace
\(\mathcal M\) of \(H^2(\D)\) is of the form $
 \mathcal M=\theta H^2(\D)$ 
for some inner function \(\theta\), and
\[
 \mathcal M\ominus z\mathcal M=\theta \C.
\]
Hence the Beurling-type theorem holds for \(T_z\) on \(H^2(\D)\).

 For $\alpha>-1$, the weighted Bergman space $A^2_\alpha(\D)$ consists of the
holomorphic functions $f$ on $\D$ such that
\begin{equation}\label{eq:bergman-norm}
 \|f\|_\alpha^2=\int_\D |f(z)|^2\,dA_\alpha(z)<\infty,
\end{equation}
where $ dA_\alpha(z)=(\alpha+1)(1-|z|^2)^\alpha dA(z)$ and  $
dA(z)=\frac{dx\,dy}{\pi}$ 
is the normalized area measure on \(\D\).

In contrast to the Hardy shift, \(T_z\) is not an isometry on
\(A^2_\alpha\), and hence the classical isometric structure underlying
Beurling's theorem is no longer available. It is therefore natural to
ask for which \(\alpha>-1\) the Beurling-type theorem holds for \(T_z\)
on \(A^2_\alpha\).

On the positive side, Aleman, Richter and Sundberg \cite{ARS}
proved that the Beurling-type theorem holds for the unweighted
Bergman space $A^2_0$. Shimorin subsequently proved \cite{Shimorin}
that it holds throughout the range $
   -1<\alpha\le1$.

To investigate the failure of the Beurling-type theorem, it is natural
to consider invariant subspaces determined by prescribed zero sets. 
For any set \(A\subset\D\), define
\begin{equation}\label{eq:zero-space}
I_A=\{f\in A^2_\alpha:f(a)=0\text{ for every }a\in A\}.
\end{equation}
Since point evaluations are bounded on $A^2_\alpha$, the space
$I_A$ is closed; moreover, it is invariant under $T_z$.
Such subspaces are called zero-based invariant subspaces
(see \cite{HP,Shimorin2001}).

Let $K_A$ denote the reproducing kernel of $I_A$.  If \(A\subset\D\setminus\{0\}\) is finite, then the wandering subspace is
one-dimensional; more precisely, as shown in
Section~\ref{sec:preliminaries},
\[
 I_A\ominus zI_A=\operatorname{span}\{K_A(\cdot,0)\}.
\]
A zero of $K_A(\cdot,0)$ outside the prescribed zero set $A$ is
called an extraneous zero.  The existence of such a zero implies
\[
   I_A\ne[I_A\ominus zI_A]_{T_z}.
\]

The study of such zero-based invariant subspaces is closely related
to extremal functions and canonical divisors in Bergman spaces.
Hedenmalm \cite{Hedenmalm1991} introduced canonical divisors through
an extremal problem for prescribed zero sets, and this point of view
was further developed for invariant subspaces in
\cite{DKSS1993,DKS1996}.  In the present Hilbert-space setting, the normalized
extremal function associated with \(I_A\) is a positive multiple of
\(K_A(\cdot,0)\).  Thus extraneous zeros of the extremal function provide a direct
obstruction to the wandering subspace property.

In 1992,  Hedenmalm and Zhu \cite{HZ} showed that the factorization properties available in the range 
\(-1<\alpha\le1\) break down once \(\alpha>1\).  They also showed
that for \(\alpha>4\) there exist zero-based invariant subspaces whose
extremal functions have extraneous zeros, and hence the
Beurling-type theorem fails in that range. 

Hedenmalm and Perdomo \cite{HP} later narrowed this gap
substantially. By relating extraneous zeros of weighted Bergman kernels
to the optimization problem for mean value surfaces with prescribed
curvature form, they showed that the Beurling-type theorem fails for
\[
\alpha>\alpha_0,\qquad \alpha_0\approx1.04.
\]
Their construction also suggested that the threshold
\(\alpha_0\) should be replaced by \(1\).

They also suggested an analytical construction in the upper
half-plane in which the zeros are distributed along two half-lines
symmetric with respect to the imaginary axis.  They expected that a
suitable choice of the angle and density might produce a zero of the
corresponding reproducing kernel for every \(\alpha>1\), but pointed
out that the main difficulty is the lack of an explicit formula for
the resulting weighted Bergman kernel.

Through study of weighted Bergman spaces with logarithmically subharmonic weights, Shimorin \cite{Shimorin2001} proposed the following conjecture:

Shimorin's Conjecture: The Beurling-type theorem fails for weighted Bergman spaces $A_{\alpha}^2$ when $\alpha>1$.

It remained open whether the Beurling-type theorem holds in the range 
\[
 1<\alpha\leq\alpha_0\approx1.04.
\]

Further work showed that the failure of Shimorin's sufficient
condition does not by itself imply the failure of the wandering
subspace property.  Wu and Yu \cite{WY} proved that, in the weighted
Bergman space \(A^2_2\), the one-point zero-based invariant subspaces
have the wandering subspace property, although Shimorin's condition
may fail on such subspaces.  Wu, Wang and Yu \cite{WWY} later showed
that Shimorin's condition fails for certain one-point zero-based
invariant subspaces in \(A^2_\alpha\) for \(\alpha>0\).

More recently, Gu and Park \cite{GuPark} studied finite zero-based
invariant subspaces in a general class of reproducing kernel Hilbert
spaces. As an application to the weighted Bergman spaces, they proved
that every one-point zero-based invariant subspace $I_a$ has the
wandering subspace property when $1<\alpha\le4$. Moreover, for each
fixed $\alpha>1$ and each $n\ge2$, there exists $\varepsilon>0$ such
that $I_A$ has the wandering subspace property whenever the $n$
prescribed zeros of $A$ lie in $\varepsilon\D$, counting
multiplicities. For such zero sets, they also proved that the
corresponding extremal function has no extraneous zeros.

The results of Gu and Park show that, within the class of finite
zero-based invariant subspaces, any counterexample in this range must
involve a more delicate choice of zeros. This leads us to search for
a finite zero set whose corresponding extremal function has an
extraneous zero.

In this paper, we answer Shimorin's conjecture affirmatively by
constructing a class of finite zero-based invariant subspaces.

\begin{theorem}\label{thm:main}
For every \(\alpha>1\), there exists a finite set
\(A \subset\D\setminus\{0\}\) such that \(I_A\) fails the wandering subspace property.
More precisely, there exists \(c\in(0,1)\setminus A\) such that $ K_A(c,0)=0$.
\end{theorem}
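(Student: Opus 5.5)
Since $I_A\ominus zI_A=\operatorname{span}\{K_A(\cdot,0)\}$ for finite $A\subset\D\setminus\{0\}$ (Section~\ref{sec:preliminaries}), it suffices to find $A$ and $c\in(0,1)\setminus A$ with $K_A(c,0)=0$. Then every function in $[I_A\ominus zI_A]_{T_z}$ vanishes at $c$, while $I_A$ contains functions with $f(c)\neq0$. I would compute $K_A(\cdot,0)$ by Gram elimination. Let $k_w(z)=(1-\bar w z)^{-2-\alpha}$ be the kernel of $A^2_\alpha$, let $A=\{a_1,\dots,a_n\}$, and let $G=(k_{a_j}(a_i))$. Then
\[
K_A(z,0)=1-\sum_{i,j} k_{a_i}(z)\,(G^{-1})_{ij}\,\overline{k_{a_j}(0)}=1-\mathbf k(z)^{T}G^{-1}\mathbf 1 ,
\]
and $K_A(c,0)=0$ is equivalent to the vanishing of the bordered determinant $\det\begin{pmatrix}1&\mathbf 1^T\\ \mathbf k(c)&G\end{pmatrix}$.

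The plan is a continuum limit in the spirit of the Hedenmalm--Perdomo half-line heuristic, discretized to finitely many points. First, take $A$ symmetric under $z\mapsto\bar z$, so $K_A(x,0)$ is real for real $x$. Then look for a sign change of $x\mapsto K_A(x,0)$ on an interval of $(0,1)$ avoiding $A$. At $x=0$ we have $K_A(0,0)=\|P_{I_A}1\|^2>0$, so it is enough to exhibit one point $c_1\in(0,1)\setminus A$ with $K_A(c_1,0)<0$; the intermediate value theorem then gives $c$. To obtain negativity, I would transfer to the upper half-plane by a Möbius map sending $0$ to $i$ and a boundary point to $\infty$, so that zeros placed densely along two rays $\{re^{i\theta}\}$ and $\{re^{i(\pi-\theta)}\}$ become an approximately dilation-invariant configuration. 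For a genuinely dilation-invariant (continuous or geometric-lattice) zero set, the extremal problem can be diagonalized by the Mellin transform. The kernel $(z-\bar w)^{-2-\alpha}$ is homogeneous, so Mellin analysis in $\log r$ turns $K(\cdot,i)$ restricted to the imaginary axis into an explicit integral whose sign can be read off from a factor such as $\Gamma$-quotients in $\alpha$ and $\theta$. The aim is to show that for every $\alpha>1$ some angle $\theta$ and density make this limit function negative at some point $iy$. I expect the condition to reduce to an inequality like $\alpha-1>0$ entering through $(1+\alpha)$-dependent exponents of $\sin\theta$.

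Then I would truncate. Use a geometric sequence of points $r_k=q^k$, $|k|\le N$, on both rays, and pull back to $\D$. Continuity is the tool here. For fixed $N$, $K_A(c_1,0)$ depends continuously on the points, since $G$ is invertible for distinct points. As $N\to\infty$ with nested $A_N$, $I_{A_N}$ decreases to $I_{A_\infty}$, so $K_{A_N}(\cdot,0)\to K_{A_\infty}(\cdot,0)$ locally uniformly (projections converge strongly). So a strict negative value for the infinite configuration persists for some finite $N$, giving Theorem~\ref{thm:main}.

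The main obstacle is the middle step: obtaining a computable formula and a sign for the infinite symmetric configuration's kernel, which is exactly the difficulty Hedenmalm and Perdomo pointed out. I would attack it first by working with the half-plane space and a lattice invariant under $z\mapsto qz$. I would decompose $A^2_\alpha$ of the sector/half-plane into $q$-quasi-periodic fibers, so that each fiber carries only finitely many zero conditions and the projection becomes a finite linear problem per Mellin frequency. Then I would analyze the regime $\theta\to0$ (rays approaching the real axis) asymptotically in $\alpha$, where the sign of the leading term should change exactly at $\alpha=1$. If this fails, the fallback is a direct small-configuration numeric-analytic approach: a few points near a boundary arc with a perturbative expansion in the distance to the boundary.
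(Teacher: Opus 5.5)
There is a genuine gap. Your reduction and your endgame match the paper's. Under $u\mapsto ie^u$, your half-plane/Mellin picture becomes the paper's strip/Fourier picture. Your geometric lattice of ratio $e^h$ on the rays of angles $\theta$ and $\pi-\theta$ becomes $\Lambda_h=\{kh\pm iT_h\}$ with $T_h=\pi/2-\theta$. Your quasi-periodic fibers are the fibers $\beta\in(-q/2,q/2]$, where now $q=2\pi/h$ (the paper's $q$, not your ratio); each fiber carries the two constraints \eqref{eq:constraints}. Your truncation and intermediate-value step is Lemma~\ref{lem:projections} together with Section~\ref{sec:finite}.

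The step you leave open, however, is the whole substance of the proof, and your plan for it lacks the two ideas that make it work. The first is the right limit. No fixed angle and density gives a kernel whose sign you can compute, and neither one-parameter limit helps. Letting $\theta\to0$ at a fixed lattice ratio makes the constraint Gram matrix $S_h(\beta)$ of \eqref{eq:Gram} blow up, so the kernel tends to the unconstrained kernel $\frac{\alpha+1}{4\pi}\sech^{\alpha+2}(x/2)>0$. Densifying at fixed $\theta$ instead sends $S_h(\beta)\to0$ and the kernel to $0$. What is needed is the coupled limit \eqref{eq:scaling}: $h\downarrow0$ with $e^{-2q\theta_h}=\Gamma(\alpha+1)/(\tau(2q)^{\alpha+1})$. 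This is tuned so that only the modes $n=\pm1$ survive and $S_h(\beta)\to\tau^{-1}I_2$. Then $K_h(x,0)\to K_\tau(x)=\frac1{2\pi}\int_\R e^{ix\xi}(m(\xi)+2\tau\cosh\pi\xi)^{-1}\,d\xi$ locally uniformly (Proposition~\ref{lem:Gram-limit} and Theorem~\ref{thm:kernel-limit}). The $(\alpha+1)$-dependent exponent you anticipate does appear here, but only as a normalization that makes sense for every $\alpha$; it is not where $\alpha>1$ is used.

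The second missing idea is how the sign is obtained; it cannot be ``read off'' from a Gamma-quotient factor. $K_\tau$ is the Fourier transform of a positive even function, so $K_\tau(0)>0$. Negativity therefore has to be extracted from the analytic continuation of $1/D_\tau$, that is, from the zeros of $D_\tau=m+2\tau\cosh(\pi\,\cdot)$ in the upper half-plane. Proposition~\ref{lem:bifurcation} produces $\tau$ for which the zeros of $D_\tau$ nearest to $\R$ are simple and lie off the imaginary axis. By \eqref{eq:gamma}, $m(iy)>0$ on $[0,3/2]$ precisely because $(\alpha+2)/2>3/2$. Hence $T_\alpha(y)=-m(iy)/(2\cos\pi y)$ tends to $+\infty$ at both ends of $(1/2,3/2)$ and has an interior minimum $\tau_*$, where $D_{\tau_*}$ has a double zero $iy_*$. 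For generic $\tau$ slightly below $\tau_*$, Rouch\'e's theorem keeps zeros near $iy_*$, while $D_\tau(iy)>0$ for $0<y<3/2$ forces them off the axis. Residues then give $K_\tau(x)=e^{-s_0x}P(x)+O(e^{-bx})$, where $P$ is a real, nonzero trigonometric sum with nonzero frequencies. Such a $P$ is negative along some sequence $x_k\to\infty$ (Lemma~\ref{lem:trig}). This is where $\alpha>1$ enters: at $\alpha=1$ one gets $T_1(y)=1/(4y^2-1)$, which has no interior minimum, and all zeros of $D_\tau$ are purely imaginary.

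Without the coupled limit and this zero-location argument, your proposal establishes only the reduction and the truncation, not the negative value on which everything rests. The fallback with a few points near a boundary arc is a search strategy, not an argument.
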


The positive result of Shimorin \cite{Shimorin}, together with
Theorem~\ref{thm:main}, gives the sharp range for the Beurling-type
theorem.

\begin{corollary}\label{cor:critical-range}
For \(\alpha>-1\), the Beurling-type theorem holds for \(T_z\) on
\(A^2_\alpha\) if and only if \(\alpha\le1\).
\end{corollary}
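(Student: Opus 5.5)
The corollary follows by combining two results. For the ``if'' direction we invoke Shimorin's theorem \cite{Shimorin}. It states that for $-1<\alpha\le1$ every closed $T_z$-invariant subspace $\mathcal M$ of $A^2_\alpha$ satisfies $\mathcal M=[\mathcal M\ominus z\mathcal M]_{T_z}$. This is precisely the Beurling-type theorem in that range, so nothing further needs to be shown there.

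For the ``only if'' direction, fix $\alpha>1$. We apply Theorem~\ref{thm:main} to obtain a finite set $A\subset\D\setminus\{0\}$ and a point $c\in(0,1)\setminus A$ with $K_A(c,0)=0$. The argument then proceeds in three steps.

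\textbf{Step 1: the wandering subspace is spanned by $K_A(\cdot,0)$.} By the description of the wandering subspace recalled above (proved in Section~\ref{sec:preliminaries}), we have $I_A\ominus zI_A=\operatorname{span}\{K_A(\cdot,0)\}$.

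\textbf{Step 2: every element of $[I_A\ominus zI_A]_{T_z}$ vanishes at $c$.} Every element of $\operatorname{span}\{z^nK_A(\cdot,0):n\ge0\}$ has the form $p\,K_A(\cdot,0)$ with $p$ a polynomial, so it vanishes at $c$. Point evaluation at $c$ is bounded on $A^2_\alpha$, so this vanishing persists in the closure. Hence every element of $[I_A\ominus zI_A]_{T_z}$ vanishes at $c$.

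\textbf{Step 3: $I_A$ contains a function not vanishing at $c$.} Since $c\notin A$ and $A$ is finite, the polynomial $f(z)=\prod_{a\in A}(z-a)$ lies in $I_A$ and satisfies $f(c)\ne0$. Therefore $I_A\ne[I_A\ominus zI_A]_{T_z}$, so the Beurling-type theorem fails for $\alpha>1$.

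The only substantive input is Theorem~\ref{thm:main}, which this corollary takes as given. Within the corollary itself, the one point needing care is that vanishing at $c$ survives the closure in Step 2; bounded point evaluation handles this. There is no real obstacle beyond that.
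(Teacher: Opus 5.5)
Your proof is correct and follows the paper's argument: Shimorin's theorem \cite{Shimorin} gives the range $-1<\alpha\le1$, and an explicit zero-based counterexample handles $\alpha>1$. The only difference is that the paper cites the infinite periodic counterexample of Proposition~\ref{prop:infinite-counterexample}, whereas you cite the finite-set Theorem~\ref{thm:main}; its first sentence already asserts that the wandering subspace property fails, so your Steps~1--3 just re-derive the last part of Lemma~\ref{lem:extra}.
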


Following the notation of Hedenmalm and Perdomo \cite{HP}, let
\[
\Delta=\frac14(\partial_x^2+\partial_y^2),
\]
and denote by
\[
\boldsymbol K_{\mathbb H}(z)
=
-\frac{4}{(1-|z|^2)^2}dA(z)
\]
the curvature form of the Poincaré metric. We recall that the optimization problem \textup{(OP)} of Hedenmalm and
Perdomo \cite{HP} asks, for a prescribed curvature density \(\mu\), to
maximize \(\omega(0)\) among positive smooth weights satisfying
\[
\Delta\log\omega=-\frac12\mu,
\qquad
\int_{\mathbb D}\omega\,dA=1.
\]
The existence of a smooth positive optimizer is equivalent to a
zero-free condition for the associated weighted Bergman kernel.
The stronger conclusion in Theorem~\ref{thm:main},  the existence
of an extraneous zero of the reproducing kernel, also has a consequence
for the prescribed curvature problem.
\begin{corollary}\label{cor:HP}
For every $\alpha>1$, there exists a real-valued
$\mu\in C^\infty(\overline{\mathbb D})$ such that
\[
\boldsymbol{\mu}
+\frac{\alpha}{2}\boldsymbol K_{\mathbb H}\le0 ,
\]
where $
\boldsymbol{\mu}=\mu\,dA$, but the associated optimization
problem \textup{(OP)} admits no $C^\infty$-smooth positive solution.
Consequently, the constant \(\alpha_0\) in
   \cite[Theorem~1.2]{HP} can be replaced by \(1\).
\end{corollary}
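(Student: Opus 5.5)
The plan is to derive Corollary~\ref{cor:HP} from Theorem~\ref{thm:main}, using the equivalence recalled just before the statement: (OP) has a smooth positive optimizer if and only if the associated weighted Bergman kernel is zero-free. We begin by fixing $\alpha>1$ and taking the finite set $A\subset\D\setminus\{0\}$ and the point $c\in(0,1)\setminus A$ with $K_A(c,0)=0$ supplied by Theorem~\ref{thm:main}. Next we transfer $I_A$ to a weighted space without zero conditions. Let $B$ be the finite Blaschke product (or polynomial) with zero set $A$. Then $f\mapsto f/B$ is an isometry of $I_A$ onto the space of holomorphic $g$ with $\int_\D|g|^2|B|^2\,dA_\alpha<\infty$, and the reproducing kernel at $0$ of that space equals $K_A(z,0)/\bigl(B(z)\overline{B(0)}\bigr)$. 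Since $B(0)\neq0$ and $B(c)\ne0$, this kernel still vanishes at $c$.

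The weight $|B|^2(1-|z|^2)^\alpha$ vanishes on $A$, so it is not smooth and positive. We therefore regularize. Replace $\log|B|^2$ by a smooth subharmonic function $\varphi_\varepsilon$ that agrees with $\log|B|^2$ outside small discs around the points of $A$; for example, mollify the point masses in $\Delta\log|B|^2=\frac{\pi}{2}\cdot\sum_{a\in A}\delta_a$ (up to the paper's normalisation of $\Delta$ and $dA$) and solve back. Set $\omega_\varepsilon=e^{\varphi_\varepsilon}(1-|z|^2)^\alpha$, normalized to have total mass one, and define $\mu_\varepsilon$ by $\Delta\log\omega_\varepsilon=-\frac12\mu_\varepsilon$. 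A direct computation gives
\[
\Delta\log(1-|z|^2)^\alpha=-\frac{\alpha}{(1-|z|^2)^2},
\]
so
\[
\mu_\varepsilon+\frac{\alpha}{2}\,\frac{-4}{(1-|z|^2)^2}=-2\Delta\varphi_\varepsilon\le 0,
\]
which is exactly the required inequality $\boldsymbol\mu+\frac\alpha2\boldsymbol K_{\mathbb H}\le0$. I will need to align this with the precise form of (OP) in \cite{HP}. There the hyperbolic part $\frac\alpha2\boldsymbol K_{\mathbb H}$ is built into the reference weight and $\mu$ is the smooth compactly supported perturbation $-2\Delta\varphi_\varepsilon$, which lies in $C^\infty(\overline{\D})$. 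The sign condition is then just subharmonicity of $\varphi_\varepsilon$.

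It remains to show that the kernel of $\omega_\varepsilon$ at $0$ has a zero for small $\varepsilon$. As $\varepsilon\to0$, the weights $\omega_\varepsilon$ converge to $|B|^2(1-|z|^2)^\alpha$ (suitably normalized), uniformly on compact sets away from $A$, and they coincide with it near the boundary. For kernels at the fixed point $0$ this gives convergence $k_\varepsilon(\cdot,0)\to K_A(\cdot,0)/\bigl(B\,\overline{B(0)}\bigr)$ locally uniformly on $\D$. One way to see this is through the extremal-problem characterization: minimize $\|g\|$ subject to $g(0)=1$. The norms are comparable uniformly in $\varepsilon$, and the comparison constant tends to $1$ away from the shrinking discs. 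Because $K_A(z,0)$ is real on the real axis and changes sign at $c$ (taking $c$ to be a simple zero, or arguing with a small circle), Hurwitz's theorem yields a zero $c_\varepsilon$ of $k_\varepsilon(\cdot,0)$ near $c$. By the Hedenmalm--Perdomo equivalence, (OP) then has no smooth positive solution. This shows $\alpha_0$ may be replaced by $1$.

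The main obstacle is the convergence step. The regularized weights behave differently from the limit weight near $A$, where $|B|^2$ vanishes but $e^{\varphi_\varepsilon}$ does not. I will handle this by a two-sided comparison $e^{\varphi_\varepsilon}\ge|B|^2$ (arranged by choosing the mollification so that $\varphi_\varepsilon\ge\log|B|^2$, as subharmonic regularization allows), which gives monotone convergence of norms, together with the fact that evaluation at $0$ and the minimizer are stable under this monotone limit.
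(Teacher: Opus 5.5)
Your overall route is the paper's: pass from $I_A$ to the weight $|B_A|^2(1-|z|^2)^\alpha$ via $K_A(z,w)=B_A(z)\overline{B_A(w)}K_{\omega_{\alpha,A}}(z,w)$, regularize, prove local uniform convergence of the kernels at $0$, and conclude with Hurwitz's theorem. Hurwitz needs only $K_{\omega_{\alpha,A}}(\cdot,0)\not\equiv0$, not a simple zero or a sign change. Your radial mollification of the point masses, with $\varphi_\varepsilon\ge\log|B|^2$ and equality off small discs, together with the extremal-problem convergence argument, is a reasonable way to make the ``limit process'' of \cite[Section~4]{HP} explicit.

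The genuine gap is the requirement $\mu\in C^\infty(\overline{\D})$. In (OP), as recalled in the paper, $\mu$ is the \emph{full} curvature density of the admissible weights, $\Delta\log\omega=-\frac12\mu$. Your $\mu_\varepsilon=\frac{2\alpha}{(1-|z|^2)^2}-2\Delta\varphi_\varepsilon$ blows up at $\partial\D$, so it is not admissible. Your proposed repair (absorb $\frac\alpha2\boldsymbol K_{\mathbb H}$ into a reference weight and take $\mu=-2\Delta\varphi_\varepsilon$) changes the problem rather than fixing it. For that $\mu$ the admissible weights are $e^{\varphi_\varepsilon}|F|^2$ with $F$ zero-free holomorphic. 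The associated kernel is therefore that of the logarithmically subharmonic weight $e^{\varphi_\varepsilon}$, not of $e^{\varphi_\varepsilon}(1-|z|^2)^\alpha$, so the zero you produce is not a zero of the kernel attached to this $\mu$. Moreover, since $\mu\le0$, the inequality $\boldsymbol\mu+\frac\alpha2\boldsymbol K_{\mathbb H}\le0$ would hold for every $\alpha\ge0$. Nonpositive curvature lies well inside the solvable range, so this $\mu$ cannot be a counterexample. The factor $(1-|z|^2)^\alpha$ is precisely what creates the extraneous zero, and it cannot be discarded.

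The missing ingredient is the ``slight radial dilation of the hyperbolic density'' that the paper takes from \cite[Section~4]{HP}. Use $\omega_{\varepsilon,r}=e^{\varphi_\varepsilon}(1-r^2|z|^2)^\alpha$ with $0<r<1$.
\begin{itemize}
\item Its curvature density is $\mu_{\varepsilon,r}=\frac{2\alpha r^2}{(1-r^2|z|^2)^2}-2\Delta\varphi_\varepsilon$, which lies in $C^\infty(\overline{\D})$.
\item Since $1-r^2|z|^2-r(1-|z|^2)=(1-r)(1+r|z|^2)\ge0$, one has $\frac{r^2}{(1-r^2|z|^2)^2}\le\frac1{(1-|z|^2)^2}$. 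Hence $\boldsymbol\mu_{\varepsilon,r}+\frac\alpha2\boldsymbol K_{\mathbb H}\le-2\Delta\varphi_\varepsilon\,dA\le0$, as required.
\end{itemize}

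Your convergence step must then be run as $\varepsilon\to0$ and $r\uparrow1$ together.
\begin{itemize}
\item The comparison $\omega_{\varepsilon,r}\ge|B|^2(1-|z|^2)^\alpha$ still gives the lower bound for the extremal values.
\item For the matching upper bound, note that the approximating spaces now coincide as sets with the unweighted Bergman space. So you must test with a function lying in it. The limiting minimizer, a constant multiple of $K_A(\cdot,0)/B$, is bounded on $\D$ because $A$ is finite: $K_A(\cdot,0)$ is a finite combination of $(1-z\overline w)^{-\alpha-2}$ with $w\in A\cup\{0\}$. Dominated convergence then applies; polynomials would also work.
\item With both bounds, the minimizers are bounded in $A^2(|B|^2dA_\alpha)$. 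Normal-family compactness and uniqueness of the minimizer give local uniform convergence of the kernels at $0$.
\end{itemize}
Hurwitz's theorem then finishes the argument.
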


Inspired by the approach in  \cite{HP} but taking a different route, we establish an isomorphism between a weighted Bergman space and a weighted Bergman space on a horizontal strip in the complex plane, 
which naturally leads us to choose a periodic set of points on two horizontal lines as the zero set. This allows us to use the Fourier transform to give a precise characterization of the corresponding zero-based invariant subspaces.
Next, by solving the corresponding minimum norm problem, we derive an integral representation of the reproducing kernel.

The limiting kernel yields an extraneous zero for the corresponding
zero-based invariant subspace.  By the convergence of the reproducing
kernels, the same phenomenon occurs for the invariant subspace
associated with the periodic zero set, which gives a counterexample to
the Beurling-type theorem for every \(\alpha>1\).  Finally, by truncating
the periodic zero set and using the convergence of reproducing kernels
for decreasing closed subspaces, we obtain a finite zero set \(A\) such
that \(K_A(\cdot,0)\) has an extraneous zero.

The paper is organized as follows.
In Section~\ref{sec:preliminaries}, we collect the necessary preliminaries,
transfer the problem from the unit disk to a horizontal strip, and establish
the Fourier representation of the corresponding weighted Bergman space.
Section~\ref{sec:lattice} introduces the periodic zero configuration and
reformulates the associated vanishing conditions as a constrained
minimum-norm problem.
In Section~\ref{sec:negative}, we study the limiting reproducing kernel,
derive an explicit representation, and prove that it assumes a negative
value at a suitable point. This sign change is then transferred to the
kernel associated with \(\mathcal M_h\).
Finally, in Section~\ref{sec:finite}, we pass from the periodic configuration
to a finite truncation and use the convergence of the corresponding kernels
to obtain the finite zero set required for the proof of
Theorem~\ref{thm:main}.

\section{Preliminaries }\label{sec:preliminaries} 
 
\subsection{Zero-based subspaces in weighted Bergman spaces}

Note that on the Bergman space $A^2_\alpha$ the monomials are mutually orthogonal, and 
\begin{equation}\label{eq:monomial-norm}
 \|z^n\|_\alpha^2
 =\frac{\Gamma(\alpha+2)n!}{\Gamma(n+\alpha+2)},
 \qquad n\ge0.
\end{equation}
It follows that  the reproducing kernel of $A^2_\alpha$ is
\begin{equation}\label{eq:disk-kernel}
 K_\alpha(z,w)
 =\frac{1}{(1-z\overline w)^{\alpha+2}},
 \qquad z,w\in\D.
\end{equation}
For background on Bergman spaces, we refer to \cite{HKZ}.

This result is presumably known, but we have not been able to locate a precise reference; we therefore provide a proof for the sake of completeness.

\begin{lemma}\label{lem:wandering}\label{lem:extra}
Let \(A\subset\D\setminus\{0\}\) and suppose that \(I_A\neq\{0\}\), where \(I_A\) is defined by \eqref{eq:zero-space}.
Then
\[
zI_A=\{f\in I_A:f(0)=0\},
\qquad
K_A(0,0)>0,
\]
and
\begin{equation}\label{eq:wandering-kernel}
I_A\ominus zI_A
=
\operatorname{span}\{K_A(\cdot,0)\}.
\end{equation}
Moreover, if \(K_A(c,0)=0\) for some \(c\in\D\) that is not a common
zero of \(I_A\), then
\[
I_A\ne[I_A\ominus zI_A]_{T_z}.
\]
\end{lemma}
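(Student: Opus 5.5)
The plan is to prove the four assertions in order, using only that $I_A$ is a closed subspace of a reproducing kernel Hilbert space with kernel $K_A$, and that $0\notin A$.

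\textbf{Step 1: $zI_A=\{f\in I_A:f(0)=0\}$.} The inclusion $\subseteq$ is clear, since $zf$ vanishes on $A$ and at $0$. For the reverse, take $f\in I_A$ with $f(0)=0$ and write $f=zg$ with $g$ holomorphic on $\D$. Since $0\notin A$, the function $g$ vanishes on $A$. It remains to show $g\in A^2_\alpha$. On $|z|\ge 1/2$ we have $|g|\le 2|f|$. On $|z|<1/2$, $g$ is bounded because it is holomorphic on a neighbourhood of the closed disk. Hence $g\in I_A$.

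\textbf{Step 2: $K_A(0,0)>0$.} We have $K_A(0,0)=\sup\{|f(0)|^2:f\in I_A,\ \|f\|_\alpha\le1\}$. Suppose every $f\in I_A$ vanished at $0$. Pick a nonzero $f\in I_A$ and let $m\ge1$ be its order of vanishing at $0$. Dividing by $z^m$, exactly as in Step 1, gives an element of $I_A$ that is nonzero at $0$, a contradiction. So $K_A(0,0)>0$.

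\textbf{Step 3: the wandering subspace.} By Step 1, $zI_A$ is the kernel of the bounded functional $f\mapsto f(0)=\langle f,K_A(\cdot,0)\rangle$ on $I_A$. Therefore
\[
I_A\ominus zI_A=(\ker)^{\perp}=\operatorname{span}\{K_A(\cdot,0)\}.
\]
This subspace is one-dimensional because $K_A(\cdot,0)\neq0$ by Step 2. In particular $zI_A$ is closed.

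\textbf{Step 4: failure of the wandering subspace property.} Let $c\in\D$ satisfy $K_A(c,0)=0$, with $c$ not a common zero of $I_A$. The generated space is
\[
[I_A\ominus zI_A]_{T_z}=\overline{\operatorname{span}}\{z^nK_A(\cdot,0):n\ge0\}.
\]
Each generator vanishes at $c$, and point evaluation at $c$ is continuous, so every function in this closure vanishes at $c$. Some $f\in I_A$ has $f(c)\neq0$, so this $f$ lies outside the generated space. Hence the inclusion is strict.

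The only mildly delicate point is the membership $g\in A^2_\alpha$ in Steps 1 and 2, and the local/near-boundary splitting handles it.
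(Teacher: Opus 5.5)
Your proof is correct and follows the paper's argument essentially step by step. The only difference is that you check the division property $f/z\in A^2_\alpha$ directly, by splitting the disk at $|z|=1/2$, whereas the paper cites it as a known property of $A^2_\alpha$.
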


\begin{proof}
We first show that
\[
zI_A=\{f\in I_A:f(0)=0\}.
\]
The inclusion $
zI_A\subseteq\{f\in I_A:f(0)=0\}$ 
is immediate. Conversely, let \(f\in I_A\) satisfy \(f(0)=0\).
Recall that \(A^2_\alpha\) has the division property at the origin:
whenever \(f\in A^2_\alpha\) and \(f(0)=0\), one has
\[
\frac{f}{z}\in A^2_\alpha.
\]
Since \(0\notin A\), it follows that \(f/z\in I_A\). Therefore $
\dim(I_A\ominus zI_A)\le 1$.

We next show that \(E_0\) is not the zero functional on \(I_A\). 
Choose \(0\neq f\in I_A\), and let \(m\ge0\) be the order of the zero
of \(f\) at \(0\). Then
\[
f(z)=z^m g(z)
\]
for some holomorphic function \(g\) with \(g(0)\neq0\).
Applying the division property successively \(m\) times, and using
\(0\notin A\), we obtain \(g\in I_A\). Hence evaluation at \(0\) is
not identically zero on \(I_A\). Consequently,
\[
K_A(0,0)
=
\|K_A(\cdot,0)\|_\alpha^2
>0.
\]
 
The reproducing property now gives the identity
\eqref{eq:wandering-kernel}.

Finally, suppose that \(K_A(c,0)=0\) and that \(c\) is not a common
zero of \(I_A\). Since
\[
I_A\ominus zI_A
=
\spanop\{K_A(\cdot,0)\},
\]
every polynomial multiple of \(K_A(\cdot,0)\) vanishes at \(c\).
As evaluation at \(c\) is bounded on \(A^2_\alpha\), it follows that
$h(c)=0$ for all $h\in[I_A\ominus zI_A]_{T_z}$.
On the other hand, there
exists \(f\in I_A\) such that \(f(c)\neq0\). Therefore
\[
I_A\neq[I_A\ominus zI_A]_{T_z}.
\]
\end{proof}

\subsection{Weighted Bergman space on the strip}

Let
\[
 S=\left\{u=x+it:|t|<\frac{\pi}{2}\right\},
\]
and define
\[
 \psi(u)=\tanh(u/2).
\]
The map $u\mapsto ie^u$ carries $S$ conformally onto the upper half-plane,
and composing with the Cayley map $\zeta\mapsto(\zeta-i)/(\zeta+i)$ gives
$\psi$. Thus $\psi$ is a conformal bijection from $S$ onto $\D$.

We shall work in the   weighted Bergman space \(\mathcal H_\alpha\) on \(S\)
\[
 \Hh=
 \left\{F\in\Hol(S):
 \|F\|_{\Hh}^2
 =
 \int_\R\int_{-\pi/2}^{\pi/2}
 |F(x+it)|^2\cos^\alpha t\,dt\,dx<\infty
 \right\}.
\]

\begin{proposition}\label{lem:unitary}\label{lem:kernel-unitary}\label{lem:real}
Let
\[
J_\alpha(u)
=
\sqrt{\frac{\alpha+1}{4\pi}}\,
\cosh(u/2)^{-\alpha-2}.
\]
where the holomorphic branch of $\cosh(u/2)^{-\alpha-2}$ is chosen to be real on
$\R$. Then the following statements hold.
\begin{enumerate}[(i)]
\item The map
\begin{equation}\label{eq:unitary-map}
(Uf)(u)=J_\alpha(u)f(\psi(u)).
\end{equation}
is unitary from $A^2_\alpha$ onto $\Hh$. The multiplier $J_\alpha$ is
zero-free on $S$ and positive on $\R$.
\item If $M\subset A^2_\alpha$ is a closed subspace and $\widetilde M=UM$, then
\begin{equation}\label{eq:kernel-transform}
 K_{\widetilde M}(u,v)
 =J_\alpha(u)\overline{J_\alpha(v)}
   K_M(\psi(u),\psi(v)),
\end{equation}
where $ K_{\widetilde M}$ and $K_M$ denote the reproducing kernels of $\widetilde M$ and $M$.
Moreover, $Uf$ vanishes on $\Lambda\subset S$ if and only if $f$ vanishes
on $\psi(\Lambda)$.
\item If $\Lambda\subset S$ is invariant under complex conjugation and $
 \mathcal M=\{F\in\Hh:F|_\Lambda=0\}$, 
then $K_{\mathcal M}(x,0)\in\R$ for every $x\in\R$.  The analogous
statement holds for $I_A\subset A^2_\alpha$ when
$A$ is invariant under complex conjugation.  Consequently, if \(A=\psi(\Lambda)\), then
\[
\operatorname{sgn} K_{\mathcal M}(x,0)
=
\operatorname{sgn} K_{I_A}(\psi(x),0),
\qquad x\in\mathbb R.
\]
\end{enumerate}
\end{proposition}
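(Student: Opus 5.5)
The plan for (i) is a direct change of variables. Since $\psi(u)=\tanh(u/2)$, we have $\psi'(u)=\tfrac12\cosh^{-2}(u/2)$ and
\[
1-|\psi(u)|^2=\frac{|\cosh(u/2)|^2-|\sinh(u/2)|^2}{|\cosh(u/2)|^2}=\frac{\cos t}{|\cosh(u/2)|^2},\qquad u=x+it .
\]
The identity $|\cosh w|^2-|\sinh w|^2=\cos(2\operatorname{Im}w)$ gives the last step. Substituting $z=\psi(u)$ in \eqref{eq:bergman-norm} then gives
\[
\|f\|_\alpha^2=\frac{\alpha+1}{\pi}\int_S|f(\psi(u))|^2\frac{\cos^\alpha t}{|\cosh(u/2)|^{2\alpha}}\cdot\frac{1}{4|\cosh(u/2)|^{4}}\,dx\,dt .
\]
This equals $\int_S|J_\alpha f\circ\psi|^2\cos^\alpha t\,dx\,dt$, so $U$ is an isometry.

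$\cosh(u/2)$ vanishes only at $u/2\in i\pi(\Z+\tfrac12)$, that is at $\operatorname{Im}u=\pm\pi,\dots$. These points lie outside $S$, and $S$ is simply connected. Hence $\cosh(u/2)^{-\alpha-2}=\exp(-(\alpha+2)\log\cosh(u/2))$ has a holomorphic branch that is real and positive on $\R$, where $\cosh>0$. In particular $J_\alpha$ is zero-free on $S$ and positive on $\R$.

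For surjectivity, given $F\in\Hh$, I set $f=(F/J_\alpha)\circ\psi^{-1}$. This is holomorphic on $\D$, and the same computation read backwards gives $\|f\|_\alpha=\|F\|_{\Hh}<\infty$, so $U$ is unitary.

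For (ii), $U$ maps $M$ unitarily onto $\widetilde M$. I will check the reproducing property of the proposed kernel directly. Put $k_v(u)=J_\alpha(u)\overline{J_\alpha(v)}K_M(\psi(u),\psi(v))$. Then $k_v=\overline{J_\alpha(v)}\,U\bigl(K_M(\cdot,\psi(v))\bigr)\in\widetilde M$, and for $F=Uf$ with $f\in M$,
\[
\langle F,k_v\rangle_{\Hh}=J_\alpha(v)\langle f,K_M(\cdot,\psi(v))\rangle_\alpha=J_\alpha(v)f(\psi(v))=F(v).
\]
By uniqueness of reproducing kernels this gives \eqref{eq:kernel-transform}. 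The vanishing statement follows because $J_\alpha$ has no zeros in $S$.

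For (iii), I will use the conjugation $F^*(u)=\overline{F(\bar u)}$. It preserves $\Hh$ and its norm, since $S$ and the weight $\cos^\alpha t$ are symmetric under $t\mapsto-t$. It also preserves $\mathcal M$ when $\Lambda=\overline\Lambda$. The orthogonal projection onto $\mathcal M$ therefore commutes with this antiunitary involution. Consequently $K_{\mathcal M}(\cdot,0)^*$ is again a reproducing kernel at $\bar 0=0$, and uniqueness gives $K_{\mathcal M}(\bar u,0)=\overline{K_{\mathcal M}(u,0)}$. For real $x$ this says $K_{\mathcal M}(x,0)\in\R$. The disk case is identical, with $f^*(z)=\overline{f(\bar z)}$ acting on $A^2_\alpha$.

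Finally, suppose $A=\psi(\Lambda)$. Then $UI_A=\mathcal M$ by the vanishing part of (ii). Applying \eqref{eq:kernel-transform} with $v=0$ and $\psi(0)=0$ gives
\[
K_{\mathcal M}(x,0)=J_\alpha(x)J_\alpha(0)K_{I_A}(\psi(x),0),
\]
and the two $J_\alpha$ factors are positive on $\R$, so the signs agree.

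The only genuinely computational step is the Jacobian and weight identity in (i); everything else is formal. I expect no real obstacle beyond checking the constant $(\alpha+1)/(4\pi)$. One point needs care: $\psi$ is real on $\R$, so $\psi(\bar u)=\overline{\psi(u)}$, and hence $A=\psi(\Lambda)$ is conjugation-invariant whenever $\Lambda$ is.
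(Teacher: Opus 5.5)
Your proposal is correct and follows the paper's proof essentially step by step. It uses the same change of variables and surjectivity argument in (i), the same check that $\overline{J_\alpha(v)}\,U K_M(\cdot,\psi(v))$ represents evaluation at $v$ in (ii), and the same conjugation $F\mapsto\overline{F(\bar u)}$ with uniqueness of the representer in (iii). The only cosmetic differences are that you locate the zeros of $\cosh(u/2)$ explicitly instead of citing conformality of $\psi$, and you repeat the conjugation argument directly on $A^2_\alpha$ rather than transferring it through the kernel relation.
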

\begin{proof}
We begin with the elementary identities.
For $u=x+it$,
\[
 \cosh\frac{u}{2}
 =\cosh\frac{x}{2}\cos\frac{t}{2}
   +i\sinh\frac{x}{2}\sin\frac{t}{2},
\]
and a direct computation gives
\[
 \left|\cosh\frac{u}{2}\right|^2
 =\frac{\cosh x+\cos t}{2},
 \qquad
 \left|\sinh\frac{u}{2}\right|^2
 =\frac{\cosh x-\cos t}{2}.
\]
Since $\psi(u)=\sinh(u/2)/\cosh(u/2)$,
\[
 1-|\psi(u)|^2
 =\frac{|\cosh(u/2)|^2-|\sinh(u/2)|^2}
        {|\cosh(u/2)|^2}
 =\frac{\cos t}{|\cosh(u/2)|^2},
\]
and
\[
 \psi'(u)=\frac1{2\cosh^2(u/2)}.
\]
Hence
\begin{equation}\label{eq:strip-jacobian}
 1-|\psi(u)|^2
 =\frac{\cos t}{|\cosh(u/2)|^2},
 \qquad
 |\psi'(u)|^2
 =\frac1{4|\cosh(u/2)|^4}.
\end{equation}

We now prove (i).  By a change of variables
$z=\psi(u)$, we obtain
\begin{align*}
 \|f\|_\alpha^2
 &=(\alpha+1)
   \int_\D|f(z)|^2
   (1-|z|^2)^\alpha dA(z)\\
 &=\frac{\alpha+1}{\pi}
   \int_S|f(\psi(u))|^2
   (1-|\psi(u)|^2)^\alpha|\psi'(u)|^2\,dx\,dt\\
 &=\frac{\alpha+1}{4\pi}
   \int_S|f(\psi(u))|^2
   |\cosh(u/2)|^{-2\alpha-4}
   \cos^\alpha t\,dx\,dt\\
 &=\int_S|J_\alpha(u)f(\psi(u))|^2\cos^\alpha t\,dx\,dt\\
 &=\|Uf\|_{\Hh}^2.
\end{align*}
Thus $U$ is an isometry. Since \(\psi(u)=\tanh(u/2)\) is conformal on \(S\),
\(\cosh(u/2)\) has no zeros in \(S\).
 As \(S\) is simply connected, one can choose a holomorphic branch of
\(\log\cosh(u/2)\) on \(S\), which we take to be real on \(\R\).  
Thus \(J_\alpha\) is zero-free on \(S\), and $J_\alpha(x)>0$ for all $x\in\R$.  If $F\in\Hh$, then
\[
 f(z)=\left(\frac{F}{J_\alpha}\right)(\psi^{-1}(z))
\]
is holomorphic on $\D$, and 
$f\in A^2_\alpha$ and $Uf=F$.  Therefore $U$ is onto and hence unitary.

(ii) It is classical that the Bergman kernel transforms in a natural way under conformal transformations of domains. For completeness, we include a proof in the weighted Bergman space case. Let $F=Uf$ with $f\in M$, we have
\begin{align*}
 F(v)
 &=J_\alpha(v)f(\psi(v))\\
 &=J_\alpha(v)
   \langle f,K_M(\cdot,\psi(v))\rangle_\alpha\\
 &=\left\langle
      Uf,\overline{J_\alpha(v)}
      U K_M(\cdot,\psi(v))
   \right\rangle_{\Hh}.
\end{align*}
Thus the Riesz representing vector for evaluation at $v$ on
\(\widetilde M\) is
\[
 K_{\widetilde M}(\cdot,v)
 =\overline{J_\alpha(v)}
   U K_M(\cdot,\psi(v)).
\]
The zero correspondence follows immediately from
\eqref{eq:unitary-map}, 
since $J_\alpha$ is zero-free.

For (iii), define the conjugation $
 (\mathcal JF)(u)=\overline{F(\overline u)}$. 
The function $\cos^\alpha t$ is even in $t$, and hence $\mathcal J$ is an
antilinear isometric involution on $\Hh$.  If $\Lambda$ is invariant under
complex conjugation, then $\mathcal J\mathcal M=\mathcal M$.  Put $
 k=K_{\mathcal M}(\cdot,0)$. 
For $F\in\mathcal M$,  a direct computation gives
\[
 \langle F,\mathcal Jk\rangle
 =\overline{\langle\mathcal JF,k\rangle}
 =\overline{(\mathcal JF)(0)}
 =F(0).
\]
Thus $\mathcal Jk$ represents the same evaluation functional as $k$.
By uniqueness, $\mathcal Jk=k$.
Hence, for real $x$,
\[
 k(x)=\overline{k(x)},
\]
it follows that $K_{\mathcal M}(x,0)$ is real. The disk setting follows from the kernel relation, since $\psi$  is conjugation-preserving and fixes the origin. Finally, if $u,v$ are real, then
$J_\alpha(u),J_\alpha(v)>0$, and the kernel transformation in (ii) preserves
sign.
\end{proof}

\subsection{The weighted Lebesgue space}

From now on fix \(\alpha>1\). Using Harper’s weighted Paley–Wiener theorem, $\Hh$ can be identified with a weighted Lebesgue space on the real line. 
To this end, we define the Laplace transform of $\cos^{\alpha}$ as
\begin{equation}\label{eq:m-definition}
m(z)
=
\int_{-\pi/2}^{\pi/2}
e^{-2zt}\cos^\alpha t\,dt.
\end{equation}

\begin{proposition}\label{lem:growth-m}
The function $m$ is even and entire, and $m(x)>0$ for $x\in\R$.  As
$x\to+\infty$,
\begin{equation}\label{eq:laplace}
m(x)
 =
 \frac{\Gamma(\alpha+1)}
      {(2x)^{\alpha+1}}
 e^{\pi x}
 \bigl(1+O(x^{-2})\bigr).
\end{equation}
For all real $x,y$,
\begin{equation}\label{eq:complex-bound}
 |m(x+iy)|\le m(x).
\end{equation}
In particular, when restricted to the real axis, $1/m\in L^1(\R)$.
\end{proposition}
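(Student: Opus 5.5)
Most of the statement follows directly from the definition \eqref{eq:m-definition}; the asymptotic \eqref{eq:laplace} is the only part that needs real work. We go through the claims in order.

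\emph{Entire, even, positive.} The integrand $e^{-2zt}\cos^\alpha t$ is entire in $z$ and is bounded by $e^{\pi|\operatorname{Re} z|}$ uniformly in $t\in[-\pi/2,\pi/2]$. By differentiation under the integral sign (or by Morera's theorem together with Fubini), $m$ is entire. The substitution $t\mapsto -t$ gives $m(-z)=m(z)$, since $\cos^\alpha t$ is even. For real $x$ the integrand is positive on $(-\pi/2,\pi/2)$, so $m(x)>0$.

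\emph{The bound \eqref{eq:complex-bound}.} This is just the triangle inequality:
\[
|m(x+iy)|\le\int_{-\pi/2}^{\pi/2}|e^{-2(x+iy)t}|\cos^\alpha t\,dt=\int_{-\pi/2}^{\pi/2} e^{-2xt}\cos^\alpha t\,dt=m(x).
\]

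\emph{The asymptotic \eqref{eq:laplace}.} This is a Watson-lemma computation at the endpoint $t=-\pi/2$, where $e^{-2xt}$ is maximal. Substitute $t=-\pi/2+s$ with $s\in[0,\pi]$. Then $\cos t=\sin s$, so
\[
m(x)=e^{\pi x}\int_0^{\pi}e^{-2xs}\sin^\alpha s\,ds .
\]
Split the integral at $s=\pi/2$. The piece over $[\pi/2,\pi]$ is $O(e^{-\pi x})$, which is exponentially small relative to the main term. On $[0,\pi/2]$ write
\[
\sin^\alpha s=s^\alpha\Bigl(\tfrac{\sin s}{s}\Bigr)^\alpha = s^\alpha\bigl(1+O(s^2)\bigr),
\]
where the correction is even in $s$, so no $O(s)$ term appears. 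Then
\[
\int_0^{\pi/2}e^{-2xs}s^\alpha\,ds=\frac{\Gamma(\alpha+1)}{(2x)^{\alpha+1}}+O(e^{-cx})
\quad\text{and}\quad
\int_0^{\infty} e^{-2xs}s^{\alpha+2}\,ds=O\bigl(x^{-\alpha-3}\bigr).
\]
Together these give the main term times $1+O(x^{-2})$. The one point to watch is exactly this: the error is $O(x^{-2})$ rather than $O(x^{-1})$ only because the expansion of $(\sin s/s)^\alpha$ contains no linear term.

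\emph{Integrability of $1/m$.} The function $1/m$ is continuous and positive on $\R$ and even. By \eqref{eq:laplace}, $1/m(x)\sim (2x)^{\alpha+1}e^{-\pi x}/\Gamma(\alpha+1)$ as $x\to+\infty$, which decays exponentially. Hence $1/m\in L^1(\R)$.
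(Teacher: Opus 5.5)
Your proposal is correct and follows essentially the same route as the paper: the substitution $t=-\pi/2+s$, the endpoint expansion $\sin^\alpha s=s^\alpha(1+O(s^2))$ with the main term from the Gamma integral and the error term $O(x^{-\alpha-3})$, the triangle inequality for the bound $|m(x+iy)|\le m(x)$, and the asymptotic formula for the integrability of $1/m$. The only cosmetic difference is that you split the integral at $s=\pi/2$ rather than at a general $\delta\in(0,\pi/2)$; this works equally well, because $(\sin s/s)^\alpha$ is smooth and even on $[0,\pi/2]$.
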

\begin{proof}  Differentiation under the integral sign shows that $m$
is entire. The substitution $s=-t$ in the integral shows that $m$ is even.
 Finally, for \(x\in\R\), by \eqref{eq:m-definition},
 $m(x) > 0$.

We next prove \eqref{eq:laplace}.  Put \(t=-\pi/2+s\). Then
\[
 m(x)
 =
 e^{\pi x}\int_0^\pi e^{-2xs}\sin^\alpha s\,ds.
\]
Fix $\delta\in(0,\pi/2)$, we have
\[
 \sin^\alpha s=s^\alpha\bigl(1+O(s^2)\bigr),
 \qquad 0\le s\le\delta.
\]
Hence
\begin{equation}\label{eq:laplace-local}
 \int_0^\delta e^{-2xs}\sin^\alpha s\,ds
 =
 \int_0^\delta e^{-2xs}s^\alpha\,ds
 +O\left(
   \int_0^\delta e^{-2xs}s^{\alpha+2}\,ds
 \right).
\end{equation}

For the error term, we have
\[
 \int_0^\delta e^{-2xs}s^{\alpha+2}\,ds
 \le
 \int_0^\infty e^{-2xs}s^{\alpha+2}\,ds
 =
 \frac{\Gamma(\alpha+3)}{(2x)^{\alpha+3}}=O(x^{-\alpha-3}).
\] 

We next evaluate the main term. Writing
\[
 \int_0^\delta e^{-2xs}s^\alpha\,ds
 =
 \int_0^\infty e^{-2xs}s^\alpha\,ds
 -
 \int_\delta^\infty e^{-2xs}s^\alpha\,ds,
\]
the first integral is, by the change of variables \(u=2xs\),
\[
 \int_0^\infty e^{-2xs}s^\alpha\,ds
 =
 \frac{\Gamma(\alpha+1)}{(2x)^{\alpha+1}}.
\]
For the remaining tail, writing \(s=\delta+r\), we obtain for \(x\ge1\)
\[
 \begin{aligned}
 \int_\delta^\infty e^{-2xs}s^\alpha\,ds
 &=
 e^{-2\delta x}
 \int_0^\infty e^{-2xr}(\delta+r)^\alpha\,dr\\
 &\le
 e^{-2\delta x}
 \int_0^\infty e^{-2r}(\delta+r)^\alpha\,dr\\
 &=O(e^{-2\delta x}).
 \end{aligned}
\]
Consequently,
\begin{equation}\label{eq:laplace-main}
 \int_0^\delta e^{-2xs}s^\alpha\,ds
 =
 \frac{\Gamma(\alpha+1)}{(2x)^{\alpha+1}}
 +O(e^{-2\delta x}).
\end{equation}

Finally, on \([\delta,\pi]\) we have \(0\le\sin^\alpha s\le1\), and hence
\[
 \int_\delta^\pi e^{-2xs}\sin^\alpha s\,ds
 \le
 \int_\delta^\pi e^{-2xs}\,ds
 \le
 \frac{e^{-2\delta x}}{2x}
 =O(e^{-2\delta x}).
\]
Combining this estimate with
\eqref{eq:laplace-local}--\eqref{eq:laplace-main}, we obtain
\[
 \int_0^\pi e^{-2xs}\sin^\alpha s\,ds
 =
 \frac{\Gamma(\alpha+1)}{(2x)^{\alpha+1}}
 +O(x^{-\alpha-3})
 +O(e^{-2\delta x}).
\]
Since the exponential term is \(O(x^{-\alpha-3})\), this becomes
\[
 \int_0^\pi e^{-2xs}\sin^\alpha s\,ds
 =
 \frac{\Gamma(\alpha+1)}{(2x)^{\alpha+1}}
 \bigl(1+O(x^{-2})\bigr).
\]
Recalling that
\[
 m(x)
 =
 e^{\pi x}\int_0^\pi e^{-2xs}\sin^\alpha s\,ds
\]
which proves \eqref{eq:laplace}.

For real $x,y$,
\[
 |m(x+iy)|
 \le\int_{-\pi/2}^{\pi/2}|e^{-2(x+iy)t}|\cos^\alpha t\,dt
 =\int_{-\pi/2}^{\pi/2}e^{-2xt}\cos^\alpha t\,dt
 =m(x),
\]
which proves \eqref{eq:complex-bound}.  Finally, by evenness and
\eqref{eq:laplace},
\[
 \frac1{m(x)}=O\bigl((1+|x|)^{\alpha+1} e^{-\pi|x|}\bigr)
 \qquad(|x|\to\infty).
\]
 Hence
$1/m\in L^1(\R)$.
\end{proof}

\begin{corollary}\label{lem:fourier}
Every \(F\in\Hh\) admits a unique
\(\varphi\in L^2(\R,m(\xi)\,d\xi)\) such that
\begin{equation}\label{eq:fourier}
 \begin{split}
 F(x+it)&=\frac1{\sqrt{2\pi}}
    \int_\R\varphi(\xi)e^{ix\xi-t\xi}\,d\xi,\\
 \|F\|_{\Hh}^2&=\int_\R|\varphi(\xi)|^2m(\xi)\,d\xi.
 \end{split}
\end{equation}
Conversely, every
\(\varphi\in L^2(\R,m(\xi)\,d\xi)\)
defines an element of \(\Hh\) by \eqref{eq:fourier}. 
The Laplace representation also permits differentiation under the
integral sign. For every integer \(j\ge0\),
\[
F^{(j)}(z)
=
\frac{1}{\sqrt{2\pi}}
\int_{\mathbb R}
(i\xi)^j\varphi(\xi)e^{iz\xi}\,d\xi ,
\qquad z\in S.
\]
Moreover, for every \(\delta>0\), the integral is absolutely convergent
and its tails tend to zero uniformly on $
\{z=x+it: x\in\mathbb R,\ |t|\le \pi/2-\delta\}$. 
\end{corollary}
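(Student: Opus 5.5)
The plan is to derive Corollary~\ref{lem:fourier} from the weighted Paley--Wiener theorem of Harper for strips, keeping the key computation explicit so the normalization matches \(m\) in \eqref{eq:m-definition}. The core is a Plancherel identity on each horizontal line, integrated against \(\cos^\alpha t\,dt\).

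\textbf{Converse direction.} Start with \(\varphi\in L^2(\R,m\,d\xi)\) and define \(F\) by \eqref{eq:fourier}.

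\begin{enumerate}[(i)]
\item \emph{Absolute convergence.} For \(|t|\le\pi/2-\delta\), Cauchy--Schwarz gives
\[
\int|\varphi(\xi)|\,|\xi|^j e^{-t\xi}\,d\xi
\le\|\varphi\|_{L^2(m)}
\Bigl(\int\xi^{2j}e^{-2t\xi}m(\xi)^{-1}\,d\xi\Bigr)^{1/2}.
\]
By \eqref{eq:laplace} and evenness, \(1/m(\xi)\lesssim(1+|\xi|)^{\alpha+1}e^{-\pi|\xi|}\). Since \(2|t|\le\pi-2\delta\), the last integral is bounded uniformly in such \(t\). Its tails over \(|\xi|>R\) tend to zero uniformly, with a bound independent of \(x\).
\item \emph{Holomorphy and derivatives.} Uniform convergence of the truncated integrals shows \(F\) is holomorphic on \(S\). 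The same estimate with the factor \((i\xi)^j\) justifies differentiating under the integral sign. Note that \(e^{ix\xi-t\xi}=e^{iz\xi}\).
\item \emph{Norm identity.} For fixed \(t\), \(x\mapsto F(x+it)\) is the inverse Fourier transform of \(\varphi(\xi)e^{-t\xi}\), which lies in \(L^1\cap L^2\) by the bound above. Plancherel gives
\[
\int_\R|F(x+it)|^2\,dx=\int_\R|\varphi(\xi)|^2e^{-2t\xi}\,d\xi.
\]
Integrate against \(\cos^\alpha t\,dt\) over \((-\pi/2,\pi/2)\) and apply Tonelli. Since \(\int e^{-2t\xi}\cos^\alpha t\,dt=m(\xi)\), this yields \(\|F\|_{\Hh}^2=\int|\varphi|^2m\). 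The right side is finite, so \(F\in\Hh\).
\end{enumerate}

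\textbf{Direct direction (the main obstacle).} Given \(F\in\Hh\), we must produce \(\varphi\). This is exactly Harper's weighted Paley--Wiener theorem, which I would cite; its hypothesis is that the weight \(\cos^\alpha t\) is positive and integrable on the strip's cross section.

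To sketch it directly, first observe that \(F_t:=F(\cdot+it)\in L^2(\R)\) for a.e.\ \(t\), by Fubini. Mean-value estimates on discs then give \(F_t\in L^2\) with norm locally bounded for all \(|t|<\pi/2\). Let \(\widehat F_t\) denote the Fourier transform of \(F_t\).

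The key step is to show \(\widehat F_t(\xi)e^{t\xi}\) is independent of \(t\). I would do this by shifting the contour in \(\int F(z)e^{-iz\xi}\,dz\) between two horizontal lines, after mollifying in \(x\) so that vertical edges contribute nothing. The vanishing of the vertical contributions follows from the averaged \(L^2\) bound, via a subsequence of \(x\to\pm\infty\) where the vertical integrals are small. Setting \(\varphi(\xi)=\widehat F_t(\xi)e^{t\xi}\), the Plancherel computation above gives the norm identity. In particular \(\varphi\in L^2(m)\), and \(F\) is given by \eqref{eq:fourier} a.e.\ on each line, hence everywhere by continuity.

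\textbf{Uniqueness.} This follows from injectivity of the Fourier transform on \(L^2\) applied on a single line.
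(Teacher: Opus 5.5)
Your proposal is correct and follows essentially the paper's route. Harper's weighted Paley--Wiener theorem gives the representation of a given $F$, and Cauchy--Schwarz against $1/m$ with the asymptotics of $m$ gives absolute convergence, the derivative formula and the uniform tails; your Plancherel-plus-Tonelli check of the converse direction and the norm identity only makes explicit what the paper reads off from Harper. One correction: the hypothesis the paper verifies for Harper's theorem is not integrability of the weight but the local condition that $v^{-\varepsilon}=\cos^{-\alpha\varepsilon}t$ be integrable (indeed bounded) on every compact subinterval of $(-\pi/2,\pi/2)$, which $\cos^\alpha t$ satisfies trivially.
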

\begin{proof}
We apply Harper's weighted Paley--Wiener theorem for vertical strips
\cite[Theorem~2.1]{Harper}. To fit Harper's convention, we set
\[
G(z)=F(iz),
\qquad
-\frac{\pi}{2}<\operatorname{Re}z<\frac{\pi}{2}.
\]
The corresponding weight is $
v(x)=\cos^\alpha x $. 
The local integrability condition required in Harper's theorem is
immediate. Indeed, for every compact interval $
J\subset(-\pi/2,\pi/2)$,  
the function $v(x)^{-\varepsilon}=\cos^{-\alpha\varepsilon}x$ is bounded on $J$
for every $\varepsilon>0$. Therefore Harper's theorem applies.
Moreover, the associated Laplace weight is
\[
\int_{-\pi/2}^{\pi/2}
e^{-2x\xi}v(x)\,dx
=
\int_{-\pi/2}^{\pi/2}
e^{-2x\xi}\cos^\alpha x\,dx
=
m(\xi).
\]
After translating Harper's Laplace representation back from \(G\) to
\(F\), namely using \(F(u)=G(-iu)\), and taking into account the
normalization constants in the Fourier representation, we obtain
\eqref{eq:fourier}.  The norm identity and uniqueness of the representing
function follow from the corresponding statements in Harper's theorem.

Conversely, the direct part of Harper's theorem implies that every $\varphi\in L^2(\mathbb R,m(\xi)\,d\xi)$ 
gives, through \eqref{eq:fourier}, an element of \(\Hh\) with the same norm.
Hence \eqref{eq:fourier} gives an isometric representation of \(\Hh\) by
\(L^2(\mathbb R,m(\xi)d\xi)\).

Fix \(\delta>0\) and assume \(|t|\le \frac{\pi}{2}-\delta\).  For every
integer \(j\ge0\), the Cauchy--Schwarz inequality gives
\[
 \int_\R|\varphi(\xi)|\,|\xi|^j e^{-t\xi}\,d\xi
 \le
 \|\varphi\|_{L^2(m)}
 \left(
   \int_\R
   \frac{|\xi|^{2j}e^{-2t\xi}}{m(\xi)}\,d\xi
 \right)^{1/2}.
\]
By Proposition~\ref{lem:growth-m}, for sufficiently large \(|\xi|\),
uniformly for \(|t|\le \frac{\pi}{2}-\delta\),
\[
 \frac{|\xi|^{2j}e^{-2t\xi}}{m(\xi)}
 \le
 C_{\delta,j}
 (1+|\xi|)^{2j+\alpha+1}e^{-2\delta|\xi|}.
\]
The function on the right belongs to \(L^1(\R)\).  Hence, for every
\(j\ge0\), the integral
\[
 \int_\R
 (i\xi)^j\varphi(\xi)e^{ix\xi-t\xi}\,d\xi
\]
converges absolutely, and its tails tend to zero uniformly for
\(x\in\R\) and \(|t|\le \frac{\pi}{2}-\delta\).  It follows that differentiation
under the integral sign is valid, and
\[
 F^{(j)}(x+it)
 =
 \frac1{\sqrt{2\pi}}
 \int_\R
 (i\xi)^j\varphi(\xi)e^{ix\xi-t\xi}\,d\xi,
 \qquad j\ge0.
\]
Thus these representations converge absolutely and uniformly on every
closed substrip
\[
 \{x+it:x\in\R,\ |t|\le \frac{\pi}{2}-\delta\}.
\]
\end{proof}

\section{Periodic zeros and a minimum-norm problem}\label{sec:lattice}

Fix \(\alpha>1\) and \(\tau>0\).  For \(h>0\) sufficiently small, set
\begin{equation}\label{eq:scaling}
 q=\frac{2\pi}{h},\qquad
 \theta_h=
 \frac{(\alpha+1)\log(2q)
       +\log\!\bigl(\tau/\Gamma(\alpha+1)\bigr)}
      {2q},
 \qquad
 T_h=\frac{\pi}{2}-\theta_h.
\end{equation}
Then
\[
 0<T_h<\frac{\pi}{2},
 \qquad
 T_h\longrightarrow\frac{\pi}{2}
 \quad(h\downarrow0).
\]

\subsection{Periodic zeros}

Define
\[
 \Lambda_h=\{kh+iT_h:k\in\Z\}\cup\{kh-iT_h:k\in\Z\},\qquad
 \Mh=\{F\in\Hh:F(\lambda)=0\text{ for all }\lambda\in\Lambda_h\}.
\]
Each point evaluation on $\Hh$ is bounded, hence $\Mh$ is a closed
subspace.  

\begin{lemma}\label{lem 3.1}
  With the notation as above, $\Mh\neq \{0\}$ and the zero set of $\Mh$ is $\Lambda_h$.
\end{lemma}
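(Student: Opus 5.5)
Both claims will follow from one explicit function: an element of $\Mh$ whose zeros in $S$ are exactly $\Lambda_h$, all simple. Then $\Mh\neq\{0\}$ is immediate. Also, for any $u\in S\setminus\Lambda_h$ this function does not vanish at $u$, so the common zero set of $\Mh$ is exactly $\Lambda_h$. Conversely every $F\in\Mh$ vanishes on $\Lambda_h$ by definition.

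For the construction, note that the points $kh+iT_h$, $k\in\Z$, are the zeros of $\sin\bigl(\pi(u-iT_h)/h\bigr)$. Likewise, the points $kh-iT_h$ are the zeros of $\sin\bigl(\pi(u+iT_h)/h\bigr)$. I therefore set
\[
 G(u)=\sin\Bigl(\frac{\pi(u-iT_h)}{h}\Bigr)\sin\Bigl(\frac{\pi(u+iT_h)}{h}\Bigr)
 =\tfrac12\Bigl(\cosh\frac{2\pi T_h}{h}-\cos\frac{2\pi u}{h}\Bigr),
\]
which is entire with simple zeros precisely at $\Lambda_h$ (in $\C$, hence in $S$). $G$ is periodic in $x$, so it is not in $\Hh$, and I multiply by a decaying zero-free factor. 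The candidate is $F=J_\alpha\cdot G$, or more simply $F(u)=G(u)\,\mathrm{sech}^{N}(u/2)$ for a suitable power $N$. The factor $\cosh(u/2)$ is zero-free on $S$ (its zeros are at $i\pi(2k+1)$, outside $\overline S$), so $F$ has zero set exactly $\Lambda_h$ in $S$.

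The main step is checking $F\in\Hh$. For $u=x+it$ with $|t|<\pi/2$, we have $|G(u)|\le \cosh(2\pi T_h/h)+\cosh(2\pi t/h)\le 2\cosh(\pi^2/h)$, which is bounded uniformly on $S$. Also $|\cosh(u/2)|^2=(\cosh x+\cos t)/2\ge \cosh x/2$, so $|\mathrm{sech}(u/2)|^{2N}\le 2^N(\cosh x)^{-N}$. Hence
\[
\|F\|_{\Hh}^2\le C_h\int_\R(\cosh x)^{-N}dx\int_{-\pi/2}^{\pi/2}\cos^\alpha t\,dt<\infty
\]
for any $N\ge1$. As an alternative phrasing, one can take $f=G\circ\psi^{-1}\cdot$ const, which is a bounded holomorphic function on $\D$ and hence lies in $A^2_\alpha$, and apply $U$ from Proposition~\ref{lem:unitary}. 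Then $UF$ vanishes exactly where $G$ does by part (ii) and the zero-freeness of $J_\alpha$. I would present this version, since boundedness of $G$ on $S$ gives $f\in H^\infty\subset A^2_\alpha$ at once.

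I expect no real obstacle. The only points needing care are confirming that $G$ has no zeros in $S$ besides $\Lambda_h$, and that they are simple. Solving $\cos(2\pi u/h)=\cosh(2\pi T_h/h)$ gives $2\pi u/h=2\pi k\pm 2\pi iT_h/h$, i.e. $u=kh\pm iT_h$, with nonzero derivative since $\sinh(2\pi T_h/h)\neq0$. Also $T_h\in(0,\pi/2)$, so these points indeed lie in $S$.
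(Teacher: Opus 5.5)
Your proposal is correct and is essentially the paper's proof. The paper uses exactly $F_h(u)=\sech^2(u/2)\sin\frac{\pi(u-iT_h)}{h}\sin\frac{\pi(u+iT_h)}{h}$, which is your construction with $N=2$, and checks $F_h\in\Hh$ with the same two bounds: a uniform bound on the sine product over $S$, and $|\sech(u/2)|^2\le 2/\cosh x$. Your alternative through $U$ and $H^\infty\subset A^2_\alpha$ is an equally valid way to verify membership, since $Uf=J_\alpha G$ and $J_\alpha$ is zero-free.
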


\begin{proof}
  Define
\[
 F_h(u)
 =
 \sech^2(u/2)
 \sin\frac{\pi(u-iT_h)}{h}
 \sin\frac{\pi(u+iT_h)}{h}.
\]
The function \(F_h\) is holomorphic on \(S\). If \(u=x+it\in S\), then
\[
 \left|
 \sin\frac{\pi(u\mp iT_h)}{h}
 \right|^2
 =
 \sin^2\frac{\pi x}{h}
 +
 \sinh^2\frac{\pi(t\mp T_h)}{h}
 \le
 \cosh^2\frac{\pi(\pi/2 +T_h)}{h}
\]
and
\[
 \left|\sech^2\frac{u}{2}\right|^2
 =
 \frac{4}{(\cosh x+\cos t)^2}
 \le
 \frac{4}{\cosh^2x}.
\]
Consequently,
\[
 |F_h(x+it)|^2\cos^\alpha t
 \le
 C_h\frac{\cos^\alpha t}{\cosh^2x},
\]
and hence \(F_h\in\Hh\).

In addition, obviously, the set of all zeros of $F_h$ is $\Lambda_h$.
\end{proof}

Since $\Lambda_h$ is invariant under complex conjugation,
Proposition~\ref{lem:real} shows that $K_h(x,0)$ is real for real $x$,
where $K_h$ denotes the reproducing kernel of $\Mh$.

Let $
 I_h=(-q/2,q/2]$.  
Since the intervals \(I_h+nq\), \(n\in\Z\), form a disjoint partition
of \(\R\), every \(\xi\in\R\) can be written uniquely as
\[
\xi=\beta+nq,
\qquad
\beta\in I_h,\quad n\in\Z.
\]
For fixed \(\beta\in I_h\), set
\[
\xi_n=\beta+nq,
\qquad n\in\Z.
\]

For fixed \(\beta\in I_h\), let \(\Sigma_{h,\beta}\) be the weighted
\(\ell^2\)-space consisting of all sequences \(c=(c_n)_{n\in\Z}\)
such that
\[
\|c\|_\beta^2
:=
\sum_{n\in\Z}m(\xi_n)|c_n|^2
<\infty.
\]
Its inner product is
\[
\langle c,b\rangle_\beta
=
\sum_{n\in\Z}
m(\xi_n)c_n\overline{b_n}.
\]

Let \(\Sigma_h\) consist of all sequences
\[
a=(a_n)_{n\in\Z},
\]
where each \(a_n:I_h\to\C\) is measurable and
\[
\int_{I_h}\|a(\beta)\|_\beta^2\,d\beta
=
\int_{I_h}
\sum_{n\in\Z}
m(\beta+nq)|a_n(\beta)|^2\,d\beta
<\infty.
\]
Two such sequence-valued functions are regarded as the same element
of \(\Sigma_h\) if they agree almost everywhere on \(I_h\). We equip \(\Sigma_h\) with the inner product
\[
\langle a,b\rangle_{\Sigma_h}
=
\int_{I_h}
\langle a(\beta),b(\beta)\rangle_\beta\,d\beta.
\]

If \(\varphi\in L^2(\R,m(\xi)\,d\xi)\), then
\begin{align}
\int_\R|\varphi(\xi)|^2m(\xi)\,d\xi
&=
\sum_{n\in\Z}
\int_{I_h+nq}|\varphi(\xi)|^2m(\xi)\,d\xi
\notag\\
&=
\int_{I_h}
\sum_{n\in\Z}
m(\beta+nq)
|\varphi(\beta+nq)|^2\,d\beta.
\label{eq:partition}
\end{align}
Hence
\[
\varphi
\longmapsto
\bigl(\varphi(\beta+nq)\bigr)_{n\in\Z}
\]
is an isometry from
\(L^2(\R,m(\xi)\,d\xi)\) into \(\Sigma_h\).

Conversely, given \(a\in\Sigma_h\), define
\[
\varphi(\beta+nq)=a_n(\beta),
\qquad
\beta\in I_h,\quad n\in\Z.
\]
Then \(\varphi\in L^2(\R,m(\xi)\,d\xi)\), and
\eqref{eq:partition} gives equality of the norms. Thus the preceding
map is an isometric isomorphism.

Combining this identification with Corollary~\ref{lem:fourier}, we
obtain a unitary map
\[
\mathcal F_h:\Hh\longrightarrow\Sigma_h.
\]
More precisely, if \(F\in\Hh\) is written as in
\eqref{eq:fourier} with
\(\varphi\in L^2(\R,m(\xi)\,d\xi)\), then
\begin{equation}\label{eq:Fh-map}
(\mathcal F_hF)_n(\beta)
=
\varphi(\beta+nq),
\qquad
\beta\in I_h,\quad n\in\Z.
\end{equation}
We shall repeatedly use the weighted Cauchy--Schwarz estimate
\begin{equation}\label{eq:weighted-CS}
 \left|\sum_{n\in\Z}a_nc_n\right|^2
 \le
 \left(\sum_{n\in\Z}m(\xi_n)|a_n|^2\right)
 \left(\sum_{n\in\Z}\frac{|c_n|^2}{m(\xi_n)}\right).
\end{equation}

\begin{proposition}\label{lem:sampling}
Let \(F\in\Hh\), and write $
a=\mathcal F_hF\in\Sigma_h$. 
Then \(F\in\Mh\) if and only if, for almost every
\(\beta\in I_h\),
\begin{equation}\label{eq:constraints}
\sum_{n\in\Z}a_n(\beta)e^{T_h\xi_n}=0,
\qquad
\sum_{n\in\Z}a_n(\beta)e^{-T_h\xi_n}=0.
\end{equation}
Both series are absolutely convergent for almost every
\(\beta\in I_h\).

Consequently,
\[
\mathcal F_h(\Mh)=V_h,
\]
where
\[
V_h
:=
\left\{
a\in\Sigma_h:
a \text{ satisfies \eqref{eq:constraints} for almost every }
\beta\in I_h
\right\}.
\]
In particular, the restriction $
\mathcal F_h:\Mh\longrightarrow V_h$ 
is unitary.
\end{proposition}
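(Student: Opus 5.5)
The plan is to evaluate $F$ at the lattice points using the Fourier representation of Corollary~\ref{lem:fourier}, and then to fold the resulting integral over $\R$ into an integral over $I_h$ by means of the partition \eqref{eq:partition}.

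For $\lambda=kh\pm iT_h$ we have $x=kh$ and $t=\pm T_h$, so
\[
F(kh\pm iT_h)=\frac1{\sqrt{2\pi}}\int_\R\varphi(\xi)e^{ikh\xi}e^{\mp T_h\xi}\,d\xi .
\]
The integral converges absolutely, since $|T_h|<\pi/2$ lets us apply the corollary with $\delta=\theta_h$. We split $\R=\bigcup_n(I_h+nq)$ and write $\xi=\beta+nq$. Because $hq=2\pi$, we get $e^{ikh\xi}=e^{ikh\beta}$, and the integral becomes
\[
F(kh\pm iT_h)=\frac1{\sqrt{2\pi}}\int_{I_h}e^{ikh\beta}\,G_\mp(\beta)\,d\beta,
\qquad
G_\mp(\beta)=\sum_{n}a_n(\beta)e^{\mp T_h\xi_n}.
\]
Two points need justification. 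First, Fubini is allowed because $\int_\R|\varphi|e^{\mp T_h\xi}\,d\xi<\infty$, and this integral equals $\int_{I_h}\sum_n|a_n(\beta)|e^{\mp T_h\xi_n}\,d\beta$. Second, this identity shows that the series $\sum_n |a_n(\beta)|e^{\mp T_h\xi_n}$ is finite for a.e.\ $\beta$, which is the claimed absolute convergence. It also shows $G_\mp\in L^1(I_h)$.

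It is useful to have more than $L^1$. Applying \eqref{eq:weighted-CS} gives
\[
|G_\mp(\beta)|^2\le\|a(\beta)\|_\beta^2\sum_n e^{\mp 2T_h\xi_n}/m(\xi_n).
\]
Proposition~\ref{lem:growth-m} bounds $e^{2T_h|\xi|}/m(\xi)$ by $C(1+|\xi|)^{\alpha+1}e^{-2\theta_h|\xi|}$, so this sum is bounded uniformly in $\beta\in I_h$. Since $\int_{I_h}\|a(\beta)\|_\beta^2\,d\beta<\infty$, it follows that $G_\mp\in L^2(I_h)$.

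The functions $\{e^{ikh\beta}\}_{k\in\Z}$ form a complete orthogonal system in $L^2(I_h)$, because $I_h$ has length $q=2\pi/h$. Therefore $F$ vanishes on $kh\pm iT_h$ for all $k$ if and only if every Fourier coefficient of $G_\mp$ vanishes, that is, if and only if $G_\mp=0$ a.e. We could instead rely only on $G_\mp\in L^1$ and use uniqueness of Fourier coefficients for $L^1$ functions; either route works. Doing this for both signs gives exactly \eqref{eq:constraints}.

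Hence $\mathcal F_h(\Mh)=V_h$. Because $\mathcal F_h$ is unitary from $\Hh$ onto $\Sigma_h$, its restriction to $\Mh$ is a unitary map onto $V_h$. I expect the main obstacle to be the careful justification of the interchange of sum and integral together with the almost-everywhere convergence; the uniform bound coming from the factor $e^{-2\theta_h|\xi|}$ is what handles it.
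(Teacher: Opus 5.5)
Your proposal is correct and follows essentially the same route as the paper. Both arguments fold the Fourier integral over the cosets $I_h+nq$ using $hq=2\pi$, and both show that $\sum_n a_n(\beta)e^{\mp T_h\xi_n}$ lies in $L^2(I_h)$ via the weighted Cauchy--Schwarz inequality and a uniform bound on $\sum_n e^{\mp 2T_h\xi_n}/m(\xi_n)$ (the paper's $Q_{\pm T_h}$), then conclude by completeness of the exponentials. The only minor difference is that you get the a.e.\ absolute convergence from Tonelli applied to the absolutely convergent Fourier integral, whereas the paper gets it pointwise from the Cauchy--Schwarz bound.
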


\begin{proof}
Fix \(h>0\) and \(t\) with \(|t|<\pi/2\).  Set
\begin{equation}\label{qt}
 Q_t(\beta)=\sum_{n\in\Z}
 \frac{e^{-2t(\beta+nq)}}{m(\beta+nq)},\qquad \beta\in I_h.
\end{equation}

We first verify that $Q_t$ is bounded on $I_h$. By Proposition~\ref{lem:growth-m} and the evenness of \(m\), we have
\begin{equation}\label{eq:Qt-majorant}
 \frac{e^{-2t\xi}}{m(\xi)}
 \le
 C_t (1+|\xi|)^{\alpha+1}
 e^{-(\pi-2|t|)|\xi|},
 \qquad \xi\in\R.
\end{equation}
Since $
 |t|<\frac{\pi}{2}$,
we have $
 \pi-2|t|>0$.

Now let \(\beta\in I_h\).  For \(n\ne0\),
\[
 (|n|-1/2)q
 \le |\beta+nq|
 \le (|n|+1/2)q.
\]
Hence, by \eqref{eq:Qt-majorant},
\[
 \begin{aligned}
\frac{e^{-2t(\beta+nq)}}{m(\beta+nq)}
\le
C_t
\bigl(1+(|n|+1/2)q\bigr)^{\alpha+1}
e^{-(\pi-2|t|)(|n|-1/2)q}.
 \end{aligned}
\]
  Therefore, the series
\[
 \sum_{n\ne0}
 \frac{e^{-2t(\beta+nq)}}{m(\beta+nq)}
\]
converges uniformly for \(\beta\in I_h\).

It remains only to consider the term \(n=0\).  Since \(m(\beta)>0\) for
real \(\beta\), the function
\[
 \beta\longmapsto\frac{e^{-2t\beta}}{m(\beta)}
\]
is continuous on the compact interval \([-q/2,q/2]\), and hence is
bounded there.  Consequently, $
 Q_t(\beta)$
is bounded uniformly for \(\beta\in I_h\). 
The constants here depend on the fixed $h$ and $t$.

By \eqref{eq:partition},
\[
 \|a(\beta)\|_\beta<\infty
\]
for almost every \(\beta\in I_h\). Applying
\eqref{eq:weighted-CS} with $
 c_n=e^{-t\xi_n}$, 
we obtain
\[
 \sum_{n\in\Z}|a_n(\beta)|e^{-t\xi_n}
 \le
\|a(\beta)\|_\beta Q_t(\beta)^{1/2}.
\]
Thus
\[
 B_t(\beta):=
 \sum_{n\in\Z}a_n(\beta)e^{-t\xi_n}
\]
is absolutely convergent for almost every \(\beta\), and
\[
 |B_t(\beta)|^2
 \le
Q_t(\beta)\|a(\beta)\|_\beta^2.
\]
Using the boundedness of \(Q_t\) on \(I_h\) and
\eqref{eq:partition}, we obtain
\[
 \int_{I_h}|B_t(\beta)|^2\,d\beta
 \le
 \left(\sup_{\beta\in I_h}Q_t(\beta)\right)
 \|F\|_{\Hh}^2.
\]
Thus $B_t\in L^2(I_h)$.

By Corollary~\ref{lem:fourier} and \(hq=2\pi\), we have
\[
 \begin{aligned}
 F(kh+it)
 &=\frac1{\sqrt{2\pi}}
   \sum_{n\in\Z}\int_{I_h}
   a_n(\beta)e^{ikh(\beta+nq)}
   e^{-t(\beta+nq)}\,d\beta\\
 &=\frac1{\sqrt{2\pi}}
   \int_{I_h}e^{2\pi ik\beta/q}B_t(\beta)\,d\beta.
 \end{aligned}
\]
Since \(B_t\in L^2(I_h)\) and $
 \left\{
 q^{-1/2}e^{2\pi ik\beta/q}:k\in\Z
 \right\}$ 
is an orthonormal basis of \(L^2(I_h)\), all the Fourier coefficients
of \(B_t\) vanish if and only if \(B_t=0\) almost everywhere on
\(I_h\).  Hence
\[
 F(kh+it)=0\quad\text{for every }k\in\Z
\]
if and only if
\[
 B_t(\beta)=0\quad\text{for almost every }\beta\in I_h.
\]
Finally, apply this equivalence first with $t=T_h$ and then with
$t=-T_h$.  These two choices give exactly the two identities in
\eqref{eq:constraints}.  The absolute convergence asserted in the
statement has already been proved above.
\end{proof}

The preceding proposition identifies the periodic vanishing conditions
with two linear constraints.  For fixed $\beta\in I_h$, let $V_{h,\beta}$ denote the closed subspace of the weighted $\ell^2-$space  \(\Sigma_{h,\beta}\) consisting of those sequences that satisfy \eqref{eq:constraints}.  We next relate this description to the
reproducing kernel $K_h$ of \(\Mh\).

For \(z\in S\) and \(\beta\in I_h\), define
\[
(r_{z,\beta})_n
=
\frac{e^{-i\overline z(\beta+nq)}}{m(\beta+nq)},
\qquad n\in\Z,
\]
and set
\[
r_z(\beta)
=
\frac1{\sqrt{2\pi}}\,r_{z,\beta}.
\]
By the estimates for \(Q_t\) in the proof of
Proposition~\ref{lem:sampling}, \(r_z\in\Sigma_h\). 
Let \(F\in\Mh\), \(z\in S\), and set \(a=\mathcal F_hF\in V_h\).
By Corollary~\ref{lem:fourier},
\[
 \langle F, K_{h}(\cdot, z)\rangle=F(z)=\langle a, r_z\rangle_{\Sigma_h}=\langle a, P_{V_h}r_z\rangle_{\Sigma_h},
 \]
where \(P_{V_h}\) denotes the orthogonal projection of
\(\Sigma_h\) onto \(V_h\).  Since $\mathcal F_h:\Mh\longrightarrow V_h$ is unitary, we have 
\(K_{h}(\cdot, z)=\mathcal F_h^{-1}P_{V_h}r_z\). Therefore, for $z, w\in S$, we have
\begin{equation}\label{kernel}
 K_h(w, z)=\langle K_{h}(\cdot, z), K_{h}(\cdot, w)\rangle=\langle P_{V_h}r_z, P_{V_h}r_w\rangle.
\end{equation}
\begin{lemma}\label{lemma:proj}
  For $a\in \Sigma_h$, $(P_{V_h}a)(\beta)=P_{V_{h, \beta}}(a(\beta))$ for almost all $\beta\in I_h$.
\end{lemma}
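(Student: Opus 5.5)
We show that the fibrewise projection defines an element of $V_h$ and that the remainder is orthogonal to $V_h$; uniqueness of orthogonal decomposition then identifies it with $P_{V_h}a$.

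For $\beta\in I_h$, the constraint functionals in \eqref{eq:constraints} are given by the vectors
\[
e^{\pm}_\beta:=\Bigl(\frac{e^{\pm T_h\xi_n}}{m(\xi_n)}\Bigr)_{n\in\Z}.
\]
By the bound on $Q_{\pm T_h}$ from the proof of Proposition~\ref{lem:sampling}, these lie in $\Sigma_{h,\beta}$ with $\|e^\pm_\beta\|_\beta^2=Q_{\mp T_h}(\beta)$ bounded uniformly in $\beta$. Indeed, $\langle c,e^\pm_\beta\rangle_\beta=\sum_n c_ne^{\pm T_h\xi_n}$. Hence
\[
V_{h,\beta}=\{e^+_\beta,e^-_\beta\}^{\perp}, \qquad V_{h,\beta}^\perp=\spanop\{e^+_\beta,e^-_\beta\}.
\]

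Next we write the fibrewise projection explicitly. The vectors $e^\pm_\beta$ are linearly independent: their ratio $e^{2T_h\xi_n}$ is nonconstant in $n$. So the Gram matrix
\[
G(\beta)=\bigl(\langle e^{j}_\beta,e^{k}_\beta\rangle_\beta\bigr)_{j,k\in\{+,-\}}
\]
is invertible. Its entries are the series $Q_{\mp T_h}(\beta)$ and $\sum_n 1/m(\xi_n)$, which are continuous in $\beta$ because the series converge uniformly. Then
\[
P_{V_{h,\beta}}c=c-\sum_{j,k}\bigl(G(\beta)^{-1}\bigr)_{kj}\langle c,e^j_\beta\rangle_\beta\, e^k_\beta .
\]
Set $b(\beta)=P_{V_{h,\beta}}(a(\beta))$. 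Measurability of each coordinate $b_n(\beta)$ follows from this formula: the functions $\beta\mapsto\langle a(\beta),e^j_\beta\rangle_\beta$ are measurable, being a.e. limits of finite sums. Square integrability holds because $\|b(\beta)\|_\beta\le\|a(\beta)\|_\beta$. Hence $b\in\Sigma_h$. Since $b(\beta)\in V_{h,\beta}$ for a.e. $\beta$, we get $b\in V_h$.

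Finally, take any $v\in V_h$. Then $v(\beta)\in V_{h,\beta}$ for a.e. $\beta$, while $a(\beta)-b(\beta)\in V_{h,\beta}^\perp$. So
\[
\langle a-b,v\rangle_{\Sigma_h}=\int_{I_h}\langle a(\beta)-b(\beta),v(\beta)\rangle_\beta\,d\beta=0 .
\]
Thus $a-b\perp V_h$ and $b\in V_h$, which gives $b=P_{V_h}a$.

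The only delicate points are measurability and membership in $\Sigma_h$. Continuity of the Gram matrix in $\beta$ is needed to guarantee that $G(\beta)^{-1}$ is measurable. I expect this to be the main, though mild, obstacle. It is handled by the uniform convergence estimates already proved for $Q_t$.
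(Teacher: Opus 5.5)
Your proof is correct and follows essentially the same route as the paper. You write $P_{V_{h,\beta}}$ as the identity minus a rank-two correction built from the two constraint functionals; your Gram matrix $G(\beta)$ coincides with the paper's $C_\beta C_\beta^*$. You then check that $\beta\mapsto P_{V_{h,\beta}}a(\beta)$ lies in $V_h$ and conclude by fibrewise orthogonality. The only difference is how measurability is obtained: you get it coordinatewise from the explicit formula and the continuity of the scalar series $Q_{\pm T_h}$ and $\sum_n 1/m(\xi_n)$, whereas the paper conjugates to $\ell^2(\Z)$ via $U_\beta$ and uses operator-norm continuity of $\widetilde P_\beta$. Both versions work.
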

\begin{proof}
We first prove that $
\beta\longmapsto P_{V_{h,\beta}}(a(\beta))$ 
defines an element of \(\Sigma_h\).

For fixed \(\beta\in I_h\), define
\[
C_\beta c
=
\begin{pmatrix}
\displaystyle\sum_{n\in\Z}
c_ne^{T_h(\beta+nq)}
\\[2mm]
\displaystyle\sum_{n\in\Z}
c_ne^{-T_h(\beta+nq)}
\end{pmatrix}.
\]
Then
\[
V_{h,\beta}=\ker C_\beta.
\]
As in the proof of Proposition~\ref{lem:sampling}, \(C_\beta\) is
bounded. It is also onto. Indeed, the sequences supported at \(n=1\)
and \(n=-1\) are mapped respectively to
\[
\begin{pmatrix}
e^{T_h(\beta+q)}\\
e^{-T_h(\beta+q)}
\end{pmatrix},
\qquad
\begin{pmatrix}
e^{T_h(\beta-q)}\\
e^{-T_h(\beta-q)}
\end{pmatrix}.
\]
The determinant of these two vectors is
\[
2\sinh(2T_hq)\neq0.
\]
Hence \(C_\beta\) is surjective. Hence
\[
P_{V_{h,\beta}}
=
I-C_\beta^*(C_\beta C_\beta^*)^{-1}C_\beta.
\]

Since the norm of \(\Sigma_{h,\beta}\) depends on \(\beta\), define
\[
U_\beta c
=
\left(
\sqrt{m(\beta+nq)}\,c_n
\right)_{n\in\Z}.
\]
Then \(U_\beta\) is unitary from \(\Sigma_{h,\beta}\) onto
\(\ell^2(\Z)\). Set
\[
\widetilde C_\beta=C_\beta U_\beta^{-1}.
\]
Thus
\[
\widetilde C_\beta x
=
\begin{pmatrix}
\langle x,u_+(\beta)\rangle_{\ell^2}\\
\langle x,u_-(\beta)\rangle_{\ell^2}
\end{pmatrix},
\]
where
\[
u_\pm(\beta)
=
\left(
\frac{e^{\pm T_h(\beta+nq)}}
{\sqrt{m(\beta+nq)}}
\right)_{n\in\Z}.
\]

The \(Q_t\)-estimate in the proof of
Proposition~\ref{lem:sampling} implies
\[
\sup_{\beta\in I_h}
\sum_{|n|>N}
\frac{e^{\pm2T_h(\beta+nq)}}{m(\beta+nq)}
\longrightarrow0
\qquad(N\to\infty).
\]
Since each coordinate depends continuously on \(\beta\), it follows that
\(\beta\mapsto u_\pm(\beta)\) is continuous as an
\(\ell^2(\Z)\)-valued function. Hence
\(\beta\mapsto\widetilde C_\beta\) is continuous in operator norm.
Therefore
\[
\widetilde P_\beta
:=
U_\beta P_{V_{h,\beta}}U_\beta^{-1}
=
I-\widetilde C_\beta^*
(\widetilde C_\beta\widetilde C_\beta^*)^{-1}
\widetilde C_\beta
\]
depends continuously on \(\beta\) in operator norm.

Since \(\beta\mapsto U_\beta a(\beta)\) is measurable,
\[
\beta\longmapsto
\widetilde P_\beta U_\beta a(\beta)
\]
is measurable.   Moreover,
\[
\int_{I_h}
\|P_{V_{h,\beta}}a(\beta)\|_\beta^2\,d\beta
\le
\int_{I_h}\|a(\beta)\|_\beta^2\,d\beta
<\infty.
\]
Hence $
\beta\longmapsto P_{V_{h,\beta}}a(\beta)$ 
defines an element of \(\Sigma_h\), and it belongs to \(V_h\).

Now let \(b\in V_h\). Then
\(b(\beta)\in V_{h,\beta}\) for almost every \(\beta\), and therefore
\[
\begin{aligned}
&\int_{I_h}
\left\langle
a(\beta)-P_{V_{h,\beta}}a(\beta),
b(\beta)
\right\rangle_\beta\,d\beta  =0.
\end{aligned}
\]
Thus
\[
a-\bigl(P_{V_{h,\beta}}a(\beta)\bigr)_{\beta\in I_h}
\perp V_h.
\]
Since the latter family belongs to \(V_h\), it is exactly the
orthogonal projection of \(a\) onto \(V_h\). Hence
\[
(P_{V_h}a)(\beta)
=
P_{V_{h,\beta}}a(\beta)
\]
for almost every \(\beta\in I_h\).
\end{proof}
 
For \(x,y\in\mathbb R\), \eqref{kernel} and
Lemma~\ref{lemma:proj} give
\begin{equation}\label{eq:global-kernel}
K_h(x,y)
=
\frac1{2\pi}
\int_{I_h}
k_h(\beta;x,y)\,d\beta,
\end{equation}
where
\begin{equation}\label{eq:fiber-kernel}
k_h(\beta;x,y)
:=
\left\langle
P_{V_{h,\beta}}r_{y,\beta},
P_{V_{h,\beta}}r_{x,\beta}
\right\rangle_\beta.
\end{equation}

The integral in \eqref{eq:global-kernel} is absolutely convergent.
Indeed, since orthogonal projections are contractive and \(x,y\) are
real,
\[
\begin{aligned}
|k_h(\beta;x,y)|
&\le
\|r_{y,\beta}\|_\beta
\|r_{x,\beta}\|_\beta =
\sum_{n\in\mathbb Z}
\frac1{m(\beta+nq)}
=
Q_0(\beta).
\end{aligned}
\]
Moreover,
\[
\int_{I_h}Q_0(\beta)\,d\beta
=
\sum_{n\in\mathbb Z}
\int_{I_h+nq}\frac{d\xi}{m(\xi)}
=
\int_{\mathbb R}\frac{d\xi}{m(\xi)}
<\infty.
\]
Thus the study of the global kernel \(K_h(x,0)\) is reduced to that of
\(k_h(\beta;x,0)\) on \(V_{h,\beta}\).

For \(a\in\Sigma_{h,\beta}\), we have
\[
\langle a,r_{x,\beta}\rangle_\beta
=
a_0e^{i\beta x}
+
\sum_{n\ne0}a_ne^{i(\beta+nq)x}.
\]
Hence the weighted Cauchy--Schwarz inequality gives
\begin{equation}\label{eq:evaluation-zero-coordinate}
\left|
\langle a,r_{x,\beta}\rangle_\beta
-
a_0e^{i\beta x}
\right|^2
\le
\|a\|_\beta^2
\sum_{n\ne0}\frac1{m(\beta+nq)}.
\end{equation}
As will be shown in \eqref{eq:Rh-estimate}, the sum on the right tends
to zero as \(h\downarrow0\), uniformly for \(\beta\) in every fixed
compact interval.  Thus, as \(h\downarrow0\), the restriction of the
functional represented by \(r_{x,\beta}\) to \(V_{h,\beta}\) is governed
to leading order by the zeroth-coordinate functional
\[
a\longmapsto a_0.
\]

We are therefore led to determine the Riesz representing vector of
\(a\mapsto a_0\) on \(V_{h,\beta}\).  Equivalently, we consider the
minimum-norm problem
\[
\min\left\{
\|a\|_\beta^2:
a\in V_{h,\beta},\ a_0=1
\right\}.
\]
The next proposition solves this minimum-norm problem and determines
its asymptotic behavior.
 
\subsection{The minimum-norm problem}

We first record the Hilbert space fact needed to carry out the
preceding construction.  If \(C:H\to\C^d\) is bounded and surjective,
then \(CC^*\) is invertible and the unique minimum-norm solution of
\(Cx=y\) is
\begin{equation}\label{eq:minimum-norm-formula}
 x_0=C^*(CC^*)^{-1}y,
 \qquad
 \|x_0\|^2=y^*(CC^*)^{-1}y.
\end{equation}
Indeed,
\begin{equation}\label{eq:kernel-range-decomposition}
 H=\ker C\oplus\Ran C^*,
\end{equation}
and the assertion follows from this orthogonal decomposition.

We now apply this fact to \(V_{h,\beta}\).
Suppose that the zeroth coordinate \(a_0\) is prescribed, and write
\(b_n=a_n\) for \(n\ne0\), with \(b_0=0\). Then the two constraints
in \eqref{eq:constraints} become
\[
 Cb
 =
 -a_0
 \begin{pmatrix}
  e^{T_h\beta}\\
  e^{-T_h\beta}
 \end{pmatrix},
\]
where
\[
 C:
 \{b:\|b\|_\beta<\infty,\ b_0=0\}
 \longrightarrow\C^2
\]
is given by
\begin{equation}\label{eq:C-definition}
 Cb
 =
 \begin{pmatrix}
  \displaystyle\sum_{n\ne0}b_ne^{T_h\xi_n}\\[2mm]
  \displaystyle\sum_{n\ne0}b_ne^{-T_h\xi_n}
 \end{pmatrix}.
\end{equation}

The corresponding matrix \(CC^*\) is
\begin{equation}\label{eq:Gram}
 S_h(\beta)
 =
 \begin{pmatrix}
 \displaystyle\sum_{n\ne0}
 \frac{e^{2T_h\xi_n}}{m(\xi_n)}
 &
 \displaystyle\sum_{n\ne0}\frac1{m(\xi_n)}
 \\[3mm]
 \displaystyle\sum_{n\ne0}\frac1{m(\xi_n)}
 &
 \displaystyle\sum_{n\ne0}
 \frac{e^{-2T_h\xi_n}}{m(\xi_n)}
 \end{pmatrix}.
\end{equation}
For later use, set
\begin{equation}\label{eq:D}
 D_\tau(z):=m(z)+2\tau\cosh(\pi z).
\end{equation}

\begin{proposition}\label{lem:min}\label{lem:Gram-limit}
The matrix \(S_h(\beta)\) is positive definite.  In \(V_{h,\beta}\)
there is a unique minimum-norm vector \(g_{h,\beta}\) satisfying $
 (g_{h,\beta})_0=1$. 
If $
 d_h(\beta)
 =
 \|g_{h,\beta}\|_\beta^2$, 
then
\begin{equation}\label{eq:dh}
d_h(\beta)
=
m(\beta)
+
\begin{pmatrix}
 e^{T_h\beta} & e^{-T_h\beta}
\end{pmatrix}
S_h(\beta)^{-1}
\begin{pmatrix}
 e^{T_h\beta}\\
 e^{-T_h\beta}
\end{pmatrix}.
\end{equation}
Moreover, if $
 W_{h,\beta}
 =
 \{a\in V_{h,\beta}:a_0=0\}$, 
then
\begin{equation}\label{eq:V-decomposition}
 V_{h,\beta}
 =
 \spanop\{g_{h,\beta}\}
 \oplus W_{h,\beta}
\end{equation}
orthogonally, and
\begin{equation}\label{eq:zeroth-coordinate-representer}
 a_0
 =
 \left\langle
 a,\frac{g_{h,\beta}}{d_h(\beta)}
 \right\rangle_{\beta},
 \qquad a\in V_{h,\beta}.
\end{equation}
Finally, as \(h\downarrow0\),
\[
S_h(\beta)\longrightarrow\tau^{-1}I_2,
\qquad
d_h(\beta)\longrightarrow D_\tau(\beta),
\]
uniformly on every fixed compact interval of \(\beta\).
\end{proposition}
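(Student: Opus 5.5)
The plan is to treat the algebraic statements first, fiber by fiber for fixed \(\beta\) and \(h\), and then derive the asymptotics from Proposition~\ref{lem:growth-m}.

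\textbf{Positive definiteness.} By the \(Q_t\)-estimates in the proof of Proposition~\ref{lem:sampling}, all entries of \(S_h(\beta)\) are finite. Write
\[
v_\pm=\bigl(e^{\pm T_h\xi_n}/\sqrt{m(\xi_n)}\bigr)_{n\ne0}.
\]
Then \(S_h(\beta)\) is the Gram matrix of \(v_+,v_-\) in \(\ell^2\). These two vectors are linearly independent, since their ratio \(e^{2T_h\xi_n}\) is not constant in \(n\). Strict Cauchy--Schwarz then gives \(\det S_h(\beta)>0\), while the diagonal entries are positive. Equivalently, \(C\) in \eqref{eq:C-definition} is surjective, exactly as in the determinant computation of Lemma~\ref{lemma:proj} using \(n=\pm1\).

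\textbf{Minimum-norm vector and \eqref{eq:dh}.} Write \(a=a_0e_0+b\) with \(b_0=0\), so that \(\|a\|_\beta^2=m(\beta)|a_0|^2+\|b\|_\beta^2\). Fixing \(a_0=1\), the constraint becomes \(Cb=y\) with
\[
y=-\bigl(e^{T_h\beta},e^{-T_h\beta}\bigr)^{T}.
\]
Applying \eqref{eq:minimum-norm-formula} with \(CC^*=S_h(\beta)\) gives a unique minimizer \(b=C^*S_h^{-1}y\), with \(\|b\|^2_\beta=y^*S_h^{-1}y\). This yields \(g_{h,\beta}\) and formula \eqref{eq:dh}; note \(y\) is real, so the sign disappears in the quadratic form.

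\textbf{Orthogonal decomposition.} For \(w\in W_{h,\beta}\) and \(\varepsilon\in\C\), the vector \(g+\varepsilon w\) is again admissible. Minimality of \(\|g+\varepsilon w\|_\beta^2\) at \(\varepsilon=0\) forces \(\langle g,w\rangle_\beta=0\). For any \(a\in V_{h,\beta}\), the vector \(a-a_0g\) lies in \(W_{h,\beta}\). This gives the orthogonal decomposition \eqref{eq:V-decomposition}. Taking the inner product of \(a=a_0g+(a-a_0g)\) with \(g\) then gives \(\langle a,g\rangle_\beta=a_0d_h(\beta)\), which is \eqref{eq:zeroth-coordinate-representer}.

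\textbf{Asymptotics.} This is the main step. Fix a compact \(K\ni\beta\) and take \(h\) small, so that \(K\subset I_h\). Write \(e^{2T_h\xi}=e^{\pi\xi}e^{-2\theta_h\xi}\). By \eqref{eq:scaling},
\[
e^{-2\theta_h q}=\frac{\Gamma(\alpha+1)}{\tau(2q)^{\alpha+1}}.
\]
\begin{enumerate}[(i)]
\item \emph{Diagonal entry, \(n=1\).} By \eqref{eq:laplace},
\[
\frac{e^{2T_h\xi_1}}{m(\xi_1)}=\frac{(2(q+\beta))^{\alpha+1}}{\Gamma(\alpha+1)}\,e^{-2\theta_h(q+\beta)}\bigl(1+O(q^{-2})\bigr)=\tau^{-1}\Bigl(1+\frac{\beta}{q}\Bigr)^{\alpha+1}e^{-2\theta_h\beta}\bigl(1+o(1)\bigr)\to\tau^{-1},
\]
uniformly on \(K\), because \(\theta_h\to0\).
\item \emph{Diagonal entry, \(n\ge2\).} The same computation bounds the terms by
\[
C\,n^{\alpha+1}(2q)^{\alpha+1}\Bigl(\frac{\Gamma(\alpha+1)}{\tau(2q)^{\alpha+1}}\Bigr)^{n}e^{2\theta_h|\beta|}.
\]
Their sum over \(n\ge2\) is \(O(q^{-(\alpha+1)})\to0\).
\item \emph{Diagonal entry, \(n\le-1\).} Since \(m\) is even, these terms are \(\lesssim|\xi_n|^{\alpha+1}e^{-2T_h|\xi_n|-\pi|\xi_n|}\), which is summably tiny.
\item \emph{Off-diagonal entry.} The sum \(\sum_{n\ne0}1/m(\xi_n)\lesssim q^{\alpha+1}e^{-\pi q/2}\) tends to \(0\); this also supplies \eqref{eq:Rh-estimate}.
\item \emph{Second diagonal entry.} It is symmetric to the first under \(n\mapsto-n\), \(\beta\mapsto-\beta\).
\end{enumerate}
Hence \(S_h(\beta)\to\tau^{-1}I_2\) and \(S_h(\beta)^{-1}\to\tau I_2\) uniformly on \(K\). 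Since \(T_h\to\pi/2\), we get
\[
d_h(\beta)\to m(\beta)+\tau\bigl(e^{\pi\beta}+e^{-\pi\beta}\bigr)=D_\tau(\beta).
\]

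The delicate point is to make the \(O(x^{-2})\) remainder in \eqref{eq:laplace} and the geometric tail bound uniform in \(n\) and \(\beta\). This works because all arguments satisfy \(|\xi_n|\ge q/2\to\infty\).
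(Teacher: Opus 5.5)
Your proposal is correct and follows essentially the same route as the paper: the minimum-norm formula \eqref{eq:minimum-norm-formula} with $CC^*=S_h(\beta)$, then the same term-by-term analysis of the Gram matrix ($n=1$, $n\ge2$, $n\le-1$, the off-diagonal entries, and the $\beta\mapsto-\beta$ symmetry). The differences are minor: you get $g_{h,\beta}\perp W_{h,\beta}$ from the first-order minimality condition instead of from $\ker C\oplus\Ran C^*$, and your off-diagonal bound $q^{\alpha+1}e^{-\pi q/2}$ is weaker than the rate $O_R(q^{\alpha+1}e^{-\pi q})$ in \eqref{eq:Rh-estimate}, so it does not literally ``supply'' that estimate, though it does give the uniform convergence $R_h\to0$ that is all the later arguments use.
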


\begin{proof} 
Fix \(h>0\) and \(\beta\in I_h\). To simplify notation, we omit the
dependence on \(h\) and \(\beta\) in this part of the proof. 
Let \(C\) be the operator defined in \eqref{eq:C-definition}.  For \(b_0=0\), the weighted Cauchy--Schwarz estimate
\eqref{eq:weighted-CS} gives
\[
 \left|\sum_{n\ne0}b_ne^{T_h\xi_n}\right|^2
 \le Q_{-T_h}(\beta)\|b\|_{\beta}^2,
\]
and
\[
 \left|\sum_{n\ne0}b_ne^{-T_h\xi_n}\right|^2
 \le Q_{T_h}(\beta)\|b\|_{\beta}^2.
\]
Hence \(C\) is bounded.

The map \(C\) is onto.  Indeed, the images under \(C\) of the
sequences supported at \(n=1\) and \(n=-1\), respectively, are
\[
 \begin{pmatrix}
 e^{T_h\xi_1}\\ e^{-T_h\xi_1}
 \end{pmatrix},
 \qquad
 \begin{pmatrix}
 e^{T_h\xi_{-1}}\\ e^{-T_h\xi_{-1}}
 \end{pmatrix}.
\]
Their determinant is
\[
 e^{T_h(\xi_1-\xi_{-1})}
 -
 e^{-T_h(\xi_1-\xi_{-1})}
 =
 2\sinh(2T_hq)\ne0.
\]
Hence \(C\) is onto.  A direct computation with the weighted inner
product gives
\[
 CC^*=S_h(\beta),
\]
where \(S_h(\beta)\) is given by \eqref{eq:Gram}.  Thus
\(S_h(\beta)\) is positive definite.

Suppose now that the zeroth coordinate \(a_0\) is prescribed.
The remaining coordinates satisfy
\[
 Cb
 =
 -a_0
 \begin{pmatrix}
  e^{T_h\beta}\\
  e^{-T_h\beta}
 \end{pmatrix}.
\]
By \eqref{eq:minimum-norm-formula}, the minimum-norm solution is
\[
 b
 =
 -a_0 C^*S_h(\beta)^{-1}
 \begin{pmatrix}
  e^{T_h\beta}\\
  e^{-T_h\beta}
 \end{pmatrix},
\]
and
\[
 \|b\|_\beta^2
 =
 |a_0|^2
 \begin{pmatrix}
  e^{T_h\beta}&e^{-T_h\beta}
 \end{pmatrix}
 S_h(\beta)^{-1}
 \begin{pmatrix}
  e^{T_h\beta}\\
  e^{-T_h\beta}
 \end{pmatrix}.
\]
Taking \(a_0=1\) gives \eqref{eq:dh}.  By \eqref{eq:kernel-range-decomposition},
the closed subspace $
 \{b:\|b\|_\beta<\infty,\ b_0=0\}$ 
decomposes as $
 \ker C\oplus\Ran C^*$ 
and hence the orthogonal decomposition
\eqref{eq:V-decomposition} holds.

Since \((g_{h,\beta})_0=1\), every
\(a\in V_{h,\beta}\) can be written uniquely as
\[
 a=a_0g_{h,\beta}+w,
 \qquad w\in W_{h,\beta}.
\]
The orthogonality above therefore gives
\[
 \langle a,g_{h,\beta}\rangle_{\beta}
 =
 a_0\|g_{h,\beta}\|_{\beta}^2
 =
 a_0d_h(\beta).
\]
Hence
\[
a_0
=
\left\langle
a,\frac{g_{h,\beta}}{d_h(\beta)}
\right\rangle_\beta,
\]
which proves \eqref{eq:zeroth-coordinate-representer}.
 
From the definition of \(\theta_h\) in \eqref{eq:scaling},
\begin{equation}\label{eq:theta-identity}
 e^{-2q\theta_h}
 =
 \frac{\Gamma(\alpha+1)}
      {\tau(2q)^{\alpha+1}}.
\end{equation}

Fix \(R>0\) and assume \(q>2R\).  Then, for
\(|\beta|\le R\), the sign of $
 \xi_n=\beta+nq$ 
agrees with the sign of \(n\) whenever \(n\ne0\).

Consider first the upper-left entry of \(S_h(\beta)\):
\[
 \sum_{n\ne0}
 \frac{e^{2T_h\xi_n}}{m(\xi_n)}.
\]
For \(n\ge1\), Proposition~\ref{lem:growth-m} and
\(T_h=\frac{\pi}{2}-\theta_h\) give, uniformly for \(|\beta|\le R\),
\begin{equation}\label{eq:positive-mode-asymptotic}
 \frac{e^{2T_h\xi_n}}{m(\xi_n)}
 =
 \frac{2^{\alpha+1}}{\Gamma(\alpha+1)}
 \xi_n^{\alpha+1}e^{-2\theta_h\xi_n}
 \bigl(1+O_R(q^{-2})\bigr).
\end{equation}

For \(n=1\), since \(\xi_1=q+\beta\), using
\eqref{eq:theta-identity} gives
\[
\begin{aligned}
 \frac{e^{2T_h\xi_1}}{m(\xi_1)}
 &=
 \frac{2^{\alpha+1}}{\Gamma(\alpha+1)}
 (q+\beta)^{\alpha+1}
 e^{-2\theta_h(q+\beta)}
 \bigl(1+O_R(q^{-2})\bigr)\\
 &=
 \tau^{-1}
 \left(1+\frac{\beta}{q}\right)^{\alpha+1}
 e^{-2\theta_h\beta}
 \bigl(1+O_R(q^{-2})\bigr).
\end{aligned}
\]
Since \(q\to\infty\) and \(\theta_h\to0\), this converges to
\(\tau^{-1}\) uniformly for \(|\beta|\le R\).

We next estimate the terms with \(n\ge2\).  From
\eqref{eq:positive-mode-asymptotic},  
\[
 \sum_{n\ge2}
 \frac{e^{2T_h\xi_n}}{m(\xi_n)}
 \le
 C_R q^{\alpha+1}
 \sum_{n\ge2}
 n^{\alpha+1} e^{-2nq\theta_h}.
\]
By \eqref{eq:theta-identity},
\[
 e^{-2q\theta_h}
 =
 O(q^{-\alpha-1})\longrightarrow0.
\]
Hence, for all sufficiently small \(h\),
\(e^{-2q\theta_h}\le1/2\).  Therefore
\[
 \begin{aligned}
 \sum_{n\ge2}n^{\alpha+1} e^{-2nq\theta_h}
 &=
 e^{-4q\theta_h}
 \sum_{n\ge2}
 n^{\alpha+1} e^{-2(n-2)q\theta_h}\\
 &\le
 e^{-4q\theta_h}
 \sum_{n\ge2}n^{\alpha+1}2^{-(n-2)}
 =
 C_\alpha e^{-4q\theta_h}.
 \end{aligned}
\]
Using \eqref{eq:theta-identity} again,
\[
 q^{\alpha+1} e^{-4q\theta_h}
 =
 q^{\alpha+1}
 \left(
\frac{\Gamma(\alpha+1)}
     {\tau(2q)^{\alpha+1}}
 \right)^2
 =
 O(q^{-\alpha-1})
 \longrightarrow0.
\]
Thus
\[
 \sum_{n\ge2}
 \frac{e^{2T_h\xi_n}}{m(\xi_n)}
 \longrightarrow0
\]
uniformly for \(|\beta|\le R\).

For \(n\le-1\), write \(\xi_n=-|\xi_n|\).  By the evenness of \(m\)
and \eqref{eq:laplace},
\[
 \frac{e^{2T_h\xi_n}}{m(\xi_n)}
 \le C_R (1+|\xi_n|)^{\alpha+1}
 e^{-(\pi+2T_h)|\xi_n|}
 =
 C_R(1+|\xi_n|)^{\alpha+1}
 e^{-(2\pi-2\theta_h)|\xi_n|}.
\]
Since $
 |\xi_n|\ge (|n|-1/2)q$, 
the sum over \(n\le-1\) tends to zero uniformly for
\(|\beta|\le R\).  Hence the upper-left entry of
\(S_h(\beta)\) converges uniformly to \(\tau^{-1}\) on
\([-R,R]\).

 By the evenness of $m$ and the change of index $n\mapsto -n$,
\[
 (S_h(\beta))_{22}=(S_h(-\beta))_{11}.
\]
Hence the lower-right entry also converges to $\tau^{-1}$,
uniformly for $\beta$ in compact intervals.

The two off-diagonal entries are equal to
\begin{equation}\label{eq:Rh}
 R_h(\beta):=\sum_{n\ne0}\frac1{m(\beta+nq)}.
\end{equation}
By \eqref{eq:Qt-majorant} with \(t=0\), for \(|\beta|\le R\) and
\(n\ne0\),
\[
 \frac1{m(\beta+nq)}
 \le
 C_R (1+|n|q)^{\alpha+1} e^{-\pi|n|q}.
\]
Hence, for all sufficiently small \(h\),
\[
 \begin{aligned}
 R_h(\beta)
 &\le
 C_R q^{\alpha+1}
 \sum_{n\ne0}(1+|n|)^{\alpha+1} e^{-\pi|n|q}\\
 &\le
 C_R q^{\alpha+1} e^{-\pi q}
 \sum_{n\ne0}(1+|n|)^{\alpha+1}
 e^{-\pi(|n|-1)q}.
 \end{aligned}
\]
Taking \(q\) large enough, we may assume \(e^{-\pi q}\le1/2\).  Therefore
\[
 \sum_{n\ne0}(1+|n|)^{\alpha+1}
 e^{-\pi(|n|-1)q}
 \le
 2\sum_{n\ge1}(1+n)^{\alpha+1}2^{-(n-1)}
 <\infty.
\]
Consequently,
\begin{equation}\label{eq:Rh-estimate}
 \sup_{|\beta|\le R}R_h(\beta)
 =O_R(q^{\alpha+1}e^{-\pi q})\longrightarrow0.
\end{equation}
Thus
\[
 S_h(\beta)\longrightarrow\tau^{-1}I_2
\]
uniformly on $[-R,R]$.

Then for all sufficiently small \(h\),
\[
 \sup_{|\beta|\le R}
 \left\|S_h(\beta)-\tau^{-1}I_2\right\|
 <\frac{1}{2\tau}.
\]
Hence the smallest eigenvalue of \(S_h(\beta)\) is at least
\(1/(2\tau)\), uniformly for \(|\beta|\le R\).  In particular,
\[
 \|S_h(\beta)^{-1}\|\le 2\tau.
\]
Using $ S_h(\beta)^{-1}-\tau I_2
 =
 S_h(\beta)^{-1}
 \bigl(\tau^{-1}I_2-S_h(\beta)\bigr)\tau I_2$, 
we obtain
\[
 \begin{aligned}
 \sup_{|\beta|\le R}
 \|S_h(\beta)^{-1}-\tau I_2\|
 &\le
 2\tau^2
 \sup_{|\beta|\le R}
 \|S_h(\beta)-\tau^{-1}I_2\|  \longrightarrow0.
 \end{aligned}
\]
Therefore $
 S_h(\beta)^{-1}\longrightarrow\tau I_2$ 
uniformly on \([-R,R]\).  Finally,
\[
 \begin{pmatrix}
 e^{T_h\beta}\\
 e^{-T_h\beta}
 \end{pmatrix}
 \longrightarrow
 \begin{pmatrix}
 e^{\pi\beta/2}\\
 e^{-\pi\beta/2}
 \end{pmatrix}
\]
uniformly for \(|\beta|\le R\).  Substituting this and
\(S_h(\beta)^{-1}\to\tau I_2\) into \eqref{eq:dh}, we obtain
\[
d_h(\beta)\longrightarrow D_\tau(\beta).
\]
This completes the proof of Proposition~\ref{lem:Gram-limit}.
\end{proof}
We now use the minimum-norm vector to compute the leading term of
\(k_h(\beta;x,0)\).  Let \(P_{W_{h,\beta}}\) denote the orthogonal
projection onto \(W_{h,\beta}\).  By \eqref{eq:V-decomposition},
\[
P_{V_{h,\beta}}r_{x,\beta}
=
\frac{\langle r_{x,\beta},g_{h,\beta}\rangle_\beta}
     {d_h(\beta)}\,g_{h,\beta}
+
P_{W_{h,\beta}}r_{x,\beta}.
\]
Hence
\begin{equation}\label{eq:kdecomp}
\begin{aligned}
k_h(\beta;x,0)
&=
\frac{
\langle g_{h,\beta},r_{x,\beta}\rangle_\beta
\overline{\langle g_{h,\beta},r_{0,\beta}\rangle_\beta}
}
{d_h(\beta)}
\\
&\quad+
\left\langle
P_{W_{h,\beta}}r_{0,\beta},
P_{W_{h,\beta}}r_{x,\beta}
\right\rangle_\beta .
\end{aligned}
\end{equation}

Recall \(R_h(\beta)\) from \eqref{eq:Rh}, and let $
e_0=(\ldots,0,0,1,0,0,\ldots)$. 
Since \((g_{h,\beta})_0=1\), we have
\[
\|g_{h,\beta}-e_0\|_{\beta}^2
=
d_h(\beta)-m(\beta).
\]
Moreover,
\[
\langle e_0,r_{x,\beta}\rangle_\beta=e^{i\beta x}.
\]
and \(g_{h,\beta}-e_0\) has zeroth coordinate equal to zero. Hence,
by \eqref{eq:weighted-CS},
\[
\begin{aligned}
\left|
\langle g_{h,\beta},r_{x,\beta}\rangle_\beta
-e^{i\beta x}
\right|^2
&=
\left|
\sum_{n\ne0}(g_{h,\beta})_n e^{i(\beta+nq)x}
\right|^2\\
&\le
\|g_{h,\beta}-e_0\|_\beta^2
\sum_{n\ne0}\frac1{m(\beta+nq)}\\
&=
\bigl(d_h(\beta)-m(\beta)\bigr)R_h(\beta).
\end{aligned}
\]

Similarly, if \(w\in W_{h,\beta}\), then \(w_0=0\), and therefore
\[
|\langle w,r_{x,\beta}\rangle_\beta|^2
\le
R_h(\beta)\|w\|_\beta^2.
\]
Since \(P_{W_{h,\beta}}r_{x,\beta}\) represents this functional on
\(W_{h,\beta}\), it follows that
\[
\|P_{W_{h,\beta}}r_{x,\beta}\|_\beta
\le
R_h(\beta)^{1/2}.
\]
Therefore
\[
\left|
\left\langle
P_{W_{h,\beta}}r_{0,\beta},
P_{W_{h,\beta}}r_{x,\beta}
\right\rangle_\beta
\right|
\le
R_h(\beta).
\]

Fix \(R,X>0\).  On \(|\beta|\le R\), the quantities
\(d_h(\beta)-m(\beta)\) are uniformly bounded for all sufficiently
small \(h\), while \(d_h(\beta)\ge m(\beta)\) is bounded away from
zero.  Since \(R_h(\beta)\to0\) uniformly there,
\eqref{eq:kdecomp} yields
\[
 \sup_{|\beta|\le R,\ |x|\le X}
 \left|
 k_h(\beta;x,0)
 -
 \frac{e^{i\beta x}}{d_h(\beta)}
 \right|
 \longrightarrow0
 \qquad(h\downarrow0).
\]
Combining this with \(d_h(\beta)\to D_\tau(\beta)\), uniformly on
compact intervals, gives
\begin{equation}\label{eq:fiber-kernel-limit}
 \sup_{|\beta|\le R,\ |x|\le X}
 \left|
 k_h(\beta;x,0)
 -
 \frac{e^{i\beta x}}{D_\tau(\beta)}
 \right|
 \longrightarrow0
 \qquad(h\downarrow0).
\end{equation}

\begin{remark}
The periodic zero set above is motivated by the two-ray configuration
suggested by Hedenmalm and Perdomo \cite[Section~4]{HP}.  Indeed, under
the conformal map
\[
u\longmapsto ie^u,
\]
the points \(kh\pm iT_h\) are mapped to
\[
e^{kh}e^{i(\pi/2\pm T_h)}.
\]
Thus, for each choice of sign, the image points lie on a fixed ray in
the upper half-plane, and their consecutive moduli differ by the
constant factor \(e^h\).  In this sense, our periodic configuration in
the strip gives a discrete analogue of the two symmetric half-lines
considered in \cite[Section~4]{HP}.

There is, however, an important difference between the two
constructions.  Hedenmalm and Perdomo proposed a continuous
``smeared-out'' distribution of zeros along the two half-lines, for
which the associated weighted Bergman kernel is difficult to compute
explicitly.  In our setting, the periodicity of the discrete zero set
allows the vanishing conditions to be transformed, through the Fourier
representation, into the two linear constraints
\eqref{eq:constraints}.  This reduces the computation of the
reproducing kernel to the corresponding minimum-norm problem and,
in particular, makes it possible to obtain effective formulas and
estimates for the kernel.  Thus our periodic discrete configuration preserves the two-ray geometry
of Hedenmalm and Perdomo, while its periodic structure allows the
reproducing kernel to be computed and estimated through the Fourier
representation and the associated minimum-norm problem.
\end{remark}

\section{The limiting kernel and the sign change}\label{sec:negative}

The preceding analysis suggests the limiting kernel
\begin{equation}\label{eq:Ktau}
 K_\tau(x)
 =
 \frac1{2\pi}
 \int_\R
 \frac{e^{ix\xi}}{D_\tau(\xi)}\,d\xi,
 \qquad x\in\R.
\end{equation}
The integral is absolutely convergent since
\[
 D_\tau(\xi)\ge m(\xi)>0,
 \qquad \xi\in\R,
\]
and \(1/m\in L^1(\R)\).

We shall first show that, for a suitable choice of \(\tau>0\),
the function \(K_\tau\) takes a negative value.  We then prove that
the reproducing kernels \(K_h(\cdot,0)\) of the periodic-zero
subspaces converge locally uniformly on the real axis to \(K_\tau\).

\subsection{Zeros of \texorpdfstring{$D_\tau$}{D tau} with nonzero real part}

For the analysis of the zeros of \(D_\tau\), we shall use an explicit
formula for \(m\).

\begin{lemma}\label{lem:m-gamma}
For every \(z\in\C\),
\begin{equation}\label{eq:gamma}
 m(z)
 =
 \frac{\pi\Gamma(\alpha+1)}
 {2^\alpha
  \Gamma\!\left(\frac{\alpha+2}{2}+iz\right)
  \Gamma\!\left(\frac{\alpha+2}{2}-iz\right)}.
\end{equation}
\end{lemma}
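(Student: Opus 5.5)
Since both sides of \eqref{eq:gamma} are entire in \(z\) (the right side is entire because \(1/\Gamma\) is entire), it suffices by the identity theorem to verify the formula for real \(z\), or even for purely imaginary \(z=-iy\) with \(y\) in a real interval. I would take the latter route. For \(z=-iy\) the definition \eqref{eq:m-definition} gives
\[
m(-iy)=\int_{-\pi/2}^{\pi/2}e^{2iyt}\cos^\alpha t\,dt
=2\int_0^{\pi/2}\cos(2yt)\cos^\alpha t\,dt,
\]
using that \(\cos^\alpha\) is even. The right-hand side of \eqref{eq:gamma} becomes
\[
\frac{\pi\Gamma(\alpha+1)}{2^\alpha\,\Gamma\!\left(\frac{\alpha+2}{2}+y\right)\Gamma\!\left(\frac{\alpha+2}{2}-y\right)}.
\]
So the claim reduces to the classical integral
\[
\int_0^{\pi/2}\cos^{\alpha}t\,\cos(bt)\,dt
=\frac{\pi\,\Gamma(\alpha+1)}{2^{\alpha+1}\Gamma\!\left(\frac{\alpha+b}{2}+1\right)\Gamma\!\left(\frac{\alpha-b}{2}+1\right)},
\qquad \alpha>-1,
\]
with \(b=2y\). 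This is Cauchy's formula (Gradshteyn--Ryzhik 3.631.9), and it holds for all real \(b\); for \(|b|\) large, one of the Gamma factors may have a pole, and the formula then correctly gives \(0\).

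To prove this integral rather than cite it, I would write
\[
\cos^\alpha t\cos(bt)=\operatorname{Re}\bigl(\cos^\alpha t\, e^{ibt}\bigr)
\]
and use \(2\cos t=e^{it}(1+e^{-2it})\). The integral over \((-\pi/2,\pi/2)\) of \(e^{ibt}(2\cos t)^\alpha\) is then a contour integral of
\[
\zeta^{(b+\alpha)/2-1}(1+\zeta^{-1})^\alpha
\]
over the half circle. Alternatively, one can use the beta-type integral
\[
\int_0^{\pi/2}\cos^{\mu-1}t\,\cos(bt)\,dt,
\]
evaluated by expanding \((1+e^{2it})^{\alpha}\) binomially and using term-by-term integration, which is justified at least for \(\alpha>0\) by absolute convergence on the boundary.

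A cleaner route is this. Both sides, as functions of \(b\), are entire of exponential type at most \(\pi/2\) and bounded on the real line. The left side is bounded trivially. For the right side, boundedness on the real line follows from the reflection formula
\[
\frac{1}{\Gamma(c+y)\Gamma(c-y)}=\frac{\sin\pi(c-y)}{\pi}\,\frac{\Gamma(1-c+y)}{\Gamma(c+y)},
\]
together with Stirling's formula, which gives decay like \(|y|^{1-2c}\). One then checks that both sides satisfy the same recurrence in \(\alpha\): integration by parts twice gives
\[
(\alpha^2-b^2)I_{\alpha-2}\propto\ldots
\]
Matching the values at a base case would finish. I regard the contour/binomial computation as the main obstacle, since convergence on \(|\zeta|=1\) near \(\zeta=-1\) requires \(\alpha>-1\) and some care. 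In the write-up I would most likely simply cite Cauchy's formula and carry out the analytic continuation step explicitly.

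Finally, a consistency check: the Stirling asymptotics of the right side along the real \(z\)-axis reproduce \eqref{eq:laplace}, since
\[
|\Gamma(c+ix)|^2\sim 2\pi x^{2c-1}e^{-\pi x}
\]
with \(c=\frac{\alpha+2}{2}\). This gives
\[
\frac{\pi\Gamma(\alpha+1)}{2^\alpha\cdot 2\pi x^{\alpha+1}}e^{\pi x}=\frac{\Gamma(\alpha+1)}{(2x)^{\alpha+1}}e^{\pi x},
\]
which confirms the normalization constant.
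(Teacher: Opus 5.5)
Your argument is correct, but it takes a genuinely different route from the paper. You compute $m$ directly from its definition \eqref{eq:m-definition} on the imaginary axis. There $m(-iy)=2\int_0^{\pi/2}\cos(2yt)\cos^\alpha t\,dt$ is Cauchy's integral, and your constants check out with $\nu=\alpha+1$, $b=2y$. You then invoke the identity theorem.

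The paper never evaluates $m$ itself. It computes the strip kernel $K_{\Hh}(x,0)$ in two ways:
\begin{itemize}
\item as $\frac1{2\pi}\int_\R e^{ix\xi}m(\xi)^{-1}\,d\xi$, via the Fourier representation of Corollary~\ref{lem:fourier};
\item as $\frac{\alpha+1}{4\pi}\sech^{\alpha+2}(x/2)$, by transplanting the disk kernel through \eqref{eq:kernel-transform}.
\end{itemize}
Fourier inversion then identifies $1/m$ as the Fourier transform of $\sech^{\alpha+2}(x/2)$. The substitution $s=e^x$ turns this into Euler's beta integral, which gives \eqref{eq:gamma} on $\R$ before analytic continuation.

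Your route is more elementary. It does not use Harper's theorem, Fourier inversion or the conformal kernel formula, but it relies on Cauchy's formula, which is less standard than the beta integral. The paper's route reuses machinery it needs anyway. It also explains why $1/m$ has a closed form: up to a constant, it is the Fourier transform of the strip Bergman kernel, which is formally the $\tau=0$ case of the later $K_\tau$.

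If you want to avoid citing Cauchy's formula, keep the binomial expansion. Write $(2\cos t)^\alpha=e^{i\alpha t}(1+e^{-2it})^\alpha$, which is valid with the principal branch for $|t|<\pi/2$. Integrating $e^{ibt}(2\cos t)^\alpha$ termwise over $(-\pi/2,\pi/2)$ gives $2\sin\bigl(\frac{\pi(b+\alpha)}2\bigr)\sum_{k\ge0}\binom{\alpha}{k}\frac{(-1)^k}{b+\alpha-2k}$. This equals $-\sin\bigl(\frac{\pi(b+\alpha)}2\bigr)B\bigl(-\frac{b+\alpha}2,\alpha+1\bigr)$, with the beta function continued meromorphically in its first argument. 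The reflection formula then finishes.

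Drop the recurrence-in-$\alpha$ sketch. As stated it does not close, because a two-step recurrence plus one base case only reaches $\alpha+2\Z$. The exponential-type remark is also not tied to any uniqueness theorem that would fill this gap.
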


\begin{proof}
Under the Fourier representation \eqref{eq:fourier}, evaluation at
the origin corresponds to
\[
 \varphi
 \longmapsto
 \frac1{\sqrt{2\pi}}
 \int_\R\varphi(\xi)\,d\xi.
\]
Its Riesz representing vector is
\[
 \xi\longmapsto
 \frac1{\sqrt{2\pi}\,m(\xi)}.
\]
Hence the reproducing kernel of the strip space
\(\Hh\) is
\begin{equation}\label{eq:strip-kernel-fourier}
 K_{\Hh}(x,0)
 =
 \frac1{2\pi}
 \int_\R\frac{e^{ix\xi}}{m(\xi)}\,d\xi.
\end{equation}

On the other hand, by \eqref{eq:kernel-transform} and
\eqref{eq:disk-kernel}, since \(\psi(0)=0\),
\[
 K_{\Hh}(x,0)
 =
 \frac{\alpha+1}{4\pi}
 \sech^{\alpha+2}(x/2).
\]
Fourier inversion therefore gives
\[
 \frac1{m(\xi)}
 =
 \frac{\alpha+1}{4\pi}
 \int_\R
 e^{-ix\xi}\sech^{\alpha+2}(x/2)\,dx.
\]

With \(s=e^x\),
\[
\begin{aligned}
 \int_\R
 e^{-ix\xi}\sech^{\alpha+2}(x/2)\,dx
 &=
 2^{\alpha+2}
 \int_0^\infty
 \frac{
 s^{(\alpha+2)/2-i\xi-1}
 }{(1+s)^{\alpha+2}}
 \,ds\\
 &=
 2^{\alpha+2}
 \frac{
 \Gamma\!\left(\frac{\alpha+2}{2}-i\xi\right)
 \Gamma\!\left(\frac{\alpha+2}{2}+i\xi\right)
 }{\Gamma(\alpha+2)},
\end{aligned}
\]
by Euler's beta integral and the beta--gamma identity
\cite[Equations~5.12.1 and~5.12.3]{DLMF}.
Since
\[
 \Gamma(\alpha+2)
 =
 (\alpha+1)\Gamma(\alpha+1),
\]
we obtain \eqref{eq:gamma} for real \(\xi\).

Finally, both sides of \eqref{eq:gamma} are entire functions of \(z\),
thus the identity theorem gives \eqref{eq:gamma} for every \(z\in\C\).
\end{proof}

\begin{proposition}\label{lem:bifurcation}\label{lem:simple}
There exist \(\tau>0\) and \(0<B<3/2\) such that \(D_\tau\) has 
finitely many zeros in $
 0< \operatorname{Im}z\le B$, 
all of which are simple and have nonzero real part.  Moreover, at
least one of these zeros lies in $
 0<\operatorname{Im}z<B$.  
Consequently, \[
s_0
:=
\min\left\{
\operatorname{Im}z:
D_\tau(z)=0,\ 
0<\operatorname{Im}z\le B
\right\}
\] 
is well defined and satisfies \(0<s_0<B\).   
\end{proposition}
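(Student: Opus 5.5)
The plan is to tune $\tau$ to a bifurcation value at which two zeros of $D_\tau$ on the positive imaginary axis collide, and then decrease $\tau$ slightly so that they leave the axis as a symmetric pair.

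\textbf{Step 1: reduction to the imaginary axis.} Put $z=is$ with $s$ real. Lemma~\ref{lem:m-gamma} gives
\[
D_\tau(is)=f_\tau(s):=\frac{\pi\Gamma(\alpha+1)}{2^\alpha\Gamma\!\left(\frac{\alpha+2}{2}-s\right)\Gamma\!\left(\frac{\alpha+2}{2}+s\right)}+2\tau\cos(\pi s),
\]
which is real. Since $\alpha>1$, we have $(\alpha+2)/2>3/2$. Hence the Gamma factors have no poles for $s\in[0,3/2]$, and $m(is)>0$ there. In addition, $m$ is even and real on $\R$, so $D_\tau(-\overline z)=\overline{D_\tau(z)}$. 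Consequently the zeros of $D_\tau$ are symmetric under $z\mapsto-\overline z$.

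\textbf{Step 2: choice of $\tau$.} On $[0,1/2]$ we have $\cos\pi s\ge0$, so $f_\tau>0$ there for every $\tau>0$. On $(1/2,3/2)$ define
\[
\tau_c=\min_{s\in(1/2,3/2)}\frac{m(is)}{2|\cos\pi s|}.
\]
The quotient tends to $+\infty$ at both endpoints, so the minimum is attained at some $s_c\in(1/2,3/2)$. Then $f_{\tau_c}\ge0$ on $[0,3/2)$, with a zero of even order $\ge2$ at $s_c$. Fix $B\in(s_c,3/2)$.

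For $\tau<\tau_c$ close to $\tau_c$, we have $f_\tau>0$ on $[0,B]$, so $D_\tau$ has no zeros on the imaginary axis with $0<\operatorname{Im}z\le B$. By Hurwitz's theorem applied in a small disc around $is_c$, $D_\tau$ has at least two zeros near $is_c$, counted with multiplicity. None of them lies on the axis, so they have nonzero real part, and they form a pair $\pm x+is$ with $s$ near $s_c<B$. This gives the zero in $0<\operatorname{Im}z<B$.

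\textbf{Step 3: finiteness.} I will show that zeros in $0\le\operatorname{Im}z\le B$ have bounded real part. Carrying out the Laplace computation of Proposition~\ref{lem:growth-m} with $z=x+iy$ and $y$ bounded gives
\[
m(z)=\Gamma(\alpha+1)(2z)^{-\alpha-1}e^{\pi z}\bigl(1+o(1)\bigr)\qquad(x\to+\infty).
\]
Hence, as $x\to+\infty$,
\[
D_\tau(z)=e^{\pi z}\bigl(\tau+o(1)\bigr)+\tau e^{-\pi z},\qquad |D_\tau(z)|\ge e^{\pi x}\bigl(\tau-o(1)\bigr)-\tau e^{-\pi x}>0 .
\]
The case $x\to-\infty$ follows from the symmetry in Step 1. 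So the zeros lie in a compact set, and since they are isolated there are finitely many. The same estimate shows that zeros stay uniformly bounded as $\tau$ ranges over a compact subinterval of $(0,\infty)$.

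\textbf{Step 4: simplicity.} A zero of $D_\tau$ off the set where $\cosh(\pi z)=0$ satisfies $\tau=g(z):=-m(z)/(2\cosh\pi z)$. It is a multiple zero exactly when $g'(z)=0$ as well. The function $g$ is nonconstant and meromorphic, so it has only finitely many critical points in the compact region from Step 3, and hence finitely many critical values. Points with $\cosh(\pi z)=0$ are not zeros, because $m\ne0$ there: $m(i/2)>0$, and the points $\pm x+i/2$ need a separate check, or can simply be avoided in the same way. I therefore choose $\tau\in(\tau_c-\varepsilon,\tau_c)$ avoiding these finitely many values. I also perturb $B$ within $(s_c,3/2)$ so that no zero has imaginary part exactly $B$; this is harmless because there are finitely many zeros. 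Then $s_0$ is a minimum over a finite nonempty set, and $0<s_0<B$.

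\textbf{Main obstacle.} I expect Step 3 to be the hardest part: it requires the complex Laplace asymptotics of $m$, uniformly in $\operatorname{Im}z\in[0,B]$, together with uniformity in $\tau$. I would derive these from Stirling's formula applied to the Gamma representation in Lemma~\ref{lem:m-gamma}. The bifurcation argument itself needs no nondegeneracy condition, because Hurwitz's theorem and the absence of zeros on the axis already force the nearby zeros to have nonzero real part.
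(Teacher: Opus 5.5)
Your proposal is correct and is essentially the paper's proof: the interior minimum of $m(is)/(2|\cos\pi s|)$ on $(1/2,3/2)$ gives a double zero at $is_c$; Rouch\'e/Hurwitz with $\tau$ slightly below $\tau_c$ pushes the nearby zeros off the imaginary axis; real parts of zeros are bounded uniformly in $\tau$; and simplicity follows by avoiding the finitely many critical values of $-m(z)/(2\cosh\pi z)$, which is the paper's elimination identity $m'(z)\cosh(\pi z)-\pi m(z)\sinh(\pi z)=0$. The step you flag as the main obstacle is in fact immediate from $|m(x+iy)|\le m(x)$ in Proposition~\ref{lem:growth-m} and $|\cosh(\pi(x+iy))|\ge|\sinh(\pi x)|$, so you need no complex Laplace or Stirling asymptotics; also, $\cosh(\pi z)$ vanishes only at the points $i(k+\tfrac12)$, so there are no points $\pm x+i/2$ to check, and the final perturbation of $B$ is unnecessary.
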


\begin{proof}

\medskip
\noindent\emph{Step 1: Construction of a multiple zero on the imaginary axis.}

For \(0\le y\le3/2\), formula \eqref{eq:gamma} gives
\[
m(iy)
=
\frac{\pi\Gamma(\alpha+1)}
{2^\alpha
 \Gamma\!\left(\frac{\alpha+2}{2}-y\right)
 \Gamma\!\left(\frac{\alpha+2}{2}+y\right)}.
\]
Since \(\alpha>1\),
\[
 \frac{\alpha+2}{2}>\frac32,
\]
both
\[
 \frac{\alpha+2}{2}-y
 \qquad\text{and}\qquad
 \frac{\alpha+2}{2}+y
\]
are positive for \(0\le y\le3/2\). Hence
\[
 m(iy)>0,
 \qquad
 0\le y\le\frac32.
\]

For \(1/2<y<3/2\), define
\[
 T_\alpha(y)
 =
 -\frac{m(iy)}{2\cos(\pi y)}.
\]
Since \(\cos(\pi y)<0\) on this interval,
\[
 T_\alpha(y)>0.
\]
Moreover,
\[
 T_\alpha(y)\longrightarrow+\infty
 \qquad
 \text{as }y\downarrow\frac12
 \text{ or }y\uparrow\frac32,
\]
since \(m(iy)\) remains positive and finite at both endpoints, whereas
\(\cos(\pi y)\to0\).  Therefore \(T_\alpha\) attains its minimum at some
\(y_*\in(1/2,3/2)\).  Set $
 \tau_*=T_\alpha(y_*)>0$. 

For \(1/2<y<3/2\),
\begin{equation}\label{eq:Dtau-imaginary}
 D_\tau(iy)
 =
 m(iy)+2\tau\cos(\pi y)
 =
 2\cos(\pi y)\bigl(\tau-T_\alpha(y)\bigr).
\end{equation}
Thus $
 D_{\tau_*}(iy_*)=0$. 
Since \(y_*\) is an interior minimum of \(T_\alpha\),
\[
 T_\alpha'(y_*)=0.
\]
Differentiating \eqref{eq:Dtau-imaginary} with respect to \(y\) and
evaluating at \(y=y_*\) gives
\[
 \frac{d}{dy}D_{\tau_*}(iy)\bigg|_{y=y_*}=0.
\]
Since $
 \frac{d}{dy}D_{\tau_*}(iy)
 =
 iD_{\tau_*}'(iy)$, 
we obtain $ D_{\tau_*}'(iy_*)=0$. 
Hence \(iy_*\) is a zero of \(D_{\tau_*}\) of multiplicity at least two.

\medskip
\noindent\emph{Step 2: Perturbation away from the imaginary axis.}

Choose a closed disk \(\overline\Delta\) centered at \(iy_*\), contained
in $
0<\operatorname{Im}z<\frac32$, 
and sufficiently small that \(D_{\tau_*}\) has no zero on
\(\partial\Delta\). Since
\[
D_\tau(z)-D_{\tau_*}(z)
=
2(\tau-\tau_*)\cosh(\pi z),
\]
and \(D_{\tau_*}\) has no zero on \(\partial\Delta\), we may choose
\(\varepsilon>0\), with \(\varepsilon<\tau_*/2\), such that
\[
2\varepsilon
\max_{z\in\partial\Delta}|\cosh(\pi z)|
<
\min_{z\in\partial\Delta}|D_{\tau_*}(z)|.
\]
Hence, whenever $
|\tau-\tau_*|\le\varepsilon$, 
we have
\[
|D_\tau(z)-D_{\tau_*}(z)|
<
|D_{\tau_*}(z)|,
\qquad z\in\partial\Delta.
\]
By Rouch\'e's theorem, \(D_\tau\) and \(D_{\tau_*}\) therefore have
the same number of zeros in \(\Delta\), counted with multiplicity.
In particular, \(D_\tau\) has at least one zero in \(\Delta\) for every $
\tau\in[\tau_*-\varepsilon,\tau_*+\varepsilon]$.

We now consider $
\tau\in[\tau_*-\varepsilon,\tau_*)$. 
Since \(\varepsilon<\tau_*/2\), all such \(\tau\) are positive.
If \(0<y<1/2\), then \(\cos(\pi y)>0\), and hence
\[
D_\tau(iy)
=
m(iy)+2\tau\cos(\pi y)>0.
\]
At \(y=1/2\),
\[
D_\tau(i/2)=m(i/2)>0.
\]
If \(1/2<y<3/2\), then
\[
\cos(\pi y)<0,
\qquad
T_\alpha(y)\ge\tau_*>\tau.
\]
Therefore, by \eqref{eq:Dtau-imaginary},
\[
D_\tau(iy)
=
2\cos(\pi y)\bigl(\tau-T_\alpha(y)\bigr)>0.
\]
Thus
\[
D_\tau(iy)\ne0,
\qquad
0<y<\frac32,
\]
for every $
\tau\in[\tau_*-\varepsilon,\tau_*)$.

Consequently, for every such \(\tau\), \(D_\tau\) has at least one zero
in \(\Delta\), while none of its zeros in \(\Delta\) lies on the
imaginary axis.

Choose \(B<3/2\) so that $\overline\Delta
\subset
\{z:0<\operatorname{Im}z<B\}$. 
We next show that the zeros of \(D_\tau\) in $
0\le \operatorname{Im}z\le B$ 
remain in a fixed bounded region, uniformly for $
\tau\in[\tau_*-\varepsilon,\tau_*+\varepsilon]$.

By \eqref{eq:complex-bound}, the evenness of \(m\), and
\eqref{eq:laplace}, uniformly for \(0\le y\le B\),
\[
|m(x+iy)|
\le m(x)
=
O\bigl(e^{\pi|x|}|x|^{-\alpha-1}\bigr),
\qquad |x|\to\infty.
\]
On the other hand,
\[
|\cosh(\pi(x+iy))|^2
=
\sinh^2(\pi x)+\cos^2(\pi y)
\ge \sinh^2(\pi x),
\]
and hence
\[
|\cosh(\pi(x+iy))|
\gtrsim e^{\pi|x|},
\qquad |x|\to\infty,
\]
uniformly for \(0\le y\le B\).

Since
\[
\tau\ge\tau_*-\varepsilon>\frac{\tau_*}{2}>0
\]
for
\(\tau\in[\tau_*-\varepsilon,\tau_*+\varepsilon]\), it follows that
\begin{equation}\label{eq:D-growth}
D_\tau(x+iy)
=
2\tau\cosh(\pi(x+iy))
\bigl(1+O(|x|^{-\alpha-1})\bigr),
\qquad |x|\to\infty,
\end{equation}
uniformly for
\[
0\le y\le B,
\qquad
\tau\in[\tau_*-\varepsilon,\tau_*+\varepsilon].
\]
Consequently, there exists \(R>0\), independent of \(\tau\) in this
interval, such that $
D_\tau(x+iy)\ne0$ 
whenever
\[
|x|>R,
\qquad
0\le y\le B.
\]
Thus all zeros of \(D_\tau\) in $
0\le\operatorname{Im}z\le B$ 
lie in the fixed compact rectangle $
\{z:|\operatorname{Re}z|\le R,\ 0\le\operatorname{Im}z\le B\}$, 
uniformly for $
\tau\in[\tau_*-\varepsilon,\tau_*+\varepsilon]$.

\medskip
\noindent\emph{Step 3: Choice of \(\tau\) for which all zeros in the strip are simple.}

Suppose that \(z\) is a multiple zero of \(D_\tau\). Then
\[
D_\tau(z)=0,
\qquad
D_\tau'(z)=0.
\]
If \(\cosh(\pi z)\ne0\), eliminating \(\tau\) from these two
equations gives
\begin{equation}\label{eq:multiple-zero-condition}
m'(z)\cosh(\pi z)
-
\pi m(z)\sinh(\pi z)
=
0.
\end{equation}
The entire function on the left-hand side of
\eqref{eq:multiple-zero-condition} is not identically zero.
Indeed, otherwise
\[
\left(
\frac{m(z)}{\cosh(\pi z)}
\right)'=0
\]
wherever \(\cosh(\pi z)\ne0\).  In particular,
\(m(x)/\cosh(\pi x)\) would be constant on the real axis.  However,
\eqref{eq:laplace} gives
\[
\frac{m(x)}{\cosh(\pi x)}
=
O(x^{-\alpha-1})
\longrightarrow0
\qquad(x\to+\infty),
\]
whereas
\[
\frac{m(0)}{\cosh0}=m(0)>0,
\]
a contradiction.

Hence the zeros of the entire function in
\eqref{eq:multiple-zero-condition} are discrete, and therefore there
are only finitely many such zeros in the fixed compact rectangle
obtained in Step~2.  Apart from the zeros of \(\cosh(\pi z)\), every
multiple zero of \(D_\tau\) in this rectangle must be one of these
finitely many points.

At each such point \(z\) with \(\cosh(\pi z)\ne0\), the equation
\(D_\tau(z)=0\) determines \(\tau\) uniquely:
\[
\tau
=
-\frac{m(z)}{2\cosh(\pi z)}.
\]
The only zero of \(\cosh(\pi z)\) in
\[
0\le\operatorname{Im}z\le B<\frac32
\]
is \(z=i/2\). Since $
D_\tau(i/2)=m(i/2)>0$, 
this point is not a zero of \(D_\tau\) for any \(\tau>0\).
Consequently, only finitely many values of $
\tau\in[\tau_*-\varepsilon,\tau_*+\varepsilon]$ 
can produce a multiple zero of \(D_\tau\) in $
0\le\operatorname{Im}z\le B$.

We now choose $
\tau\in(\tau_*-\varepsilon,\tau_*)$ 
outside this finite set of exceptional values.  For this choice of
\(\tau\), Step~2 shows that \(D_\tau\) has at least one zero in
\(\Delta\), and hence at least one zero in $
0<\operatorname{Im}z<B$. 
Moreover,
\[
D_\tau(iy)\ne0,
\qquad
0<y<\frac32,
\]
so none of the zeros of \(D_\tau\) in $
0<\operatorname{Im}z\le B$ 
lies on the imaginary axis.  By the choice of \(\tau\), every zero of
\(D_\tau\) in $
0\le\operatorname{Im}z\le B$
 is simple.

By Step~2, all zeros of \(D_\tau\) in this closed strip lie in a fixed
compact rectangle.  Since \(D_\tau\) is not identically zero, its zeros
are isolated, and hence there are only finitely many of them.  Finally,
\[
D_\tau(x)
=
m(x)+2\tau\cosh(\pi x)>0,
\qquad x\in\R,
\]
so there are no zeros on the real axis.

Thus $
s_0
=
\min\left\{
\operatorname{Im}z:
D_\tau(z)=0,\ 
0<\operatorname{Im}z\le B
\right\}$
is well defined.  Since \(D_\tau\) has a zero in
\(\Delta\subset\{0<\operatorname{Im}z<B\}\), we have $
0<s_0<B$.
\end{proof}

\subsection{Sign change of the limiting kernel}

We first record an elementary fact about finite trigonometric sums.

\begin{lemma}\label{lem:trig}
Suppose
\[
P(x)=\sum_{j=1}^N c_j e^{ir_jx}
\]
is real-valued and not identically zero, where the \(r_j\) are
distinct nonzero real numbers. Then
\[
\liminf_{x\to\infty}P(x)<0.
\]
In particular, there exist \(\delta>0\) and a sequence \(x_k\to\infty\)
such that
$
P(x_k)<-\delta.
$
\end{lemma}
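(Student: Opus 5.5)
The natural approach is to use the mean-zero property of $P$ together with almost periodicity: a real function with zero mean that is not identically zero must take negative values, and by almost periodicity it does so at arbitrarily large $x$.

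\emph{Step 1 (zero mean and positive mean square).} Since every $r_j\neq0$,
\[
\frac1L\int_0^L P(x)\,dx=\sum_j c_j\,\frac{e^{ir_jL}-1}{ir_jL}\longrightarrow0 \qquad (L\to\infty).
\]
By orthogonality of distinct exponentials in the Bohr mean,
\[
\frac1L\int_0^L|P(x)|^2\,dx\longrightarrow\sum_j|c_j|^2=:\sigma>0 .
\]
Here $\sigma>0$ because $P\not\equiv0$ forces some $c_j\neq0$.

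\emph{Step 2 (argue by contradiction).} Suppose $\liminf_{x\to\infty}P(x)\ge0$. Then for every $\eta>0$ there is $x_\eta$ with $P(x)\ge-\eta$ for $x\ge x_\eta$. Since $P$ is bounded, say $|P|\le M:=\sum_j|c_j|$, we get
\[
\frac1L\int_{x_\eta}^{x_\eta+L}|P| \le \frac1L\int_{x_\eta}^{x_\eta+L}P+2\eta .
\]
The right side tends to $2\eta$ as $L\to\infty$, because the mean of $P$ over any window of length $L\to\infty$ tends to $0$. Using $|P|^2\le M|P|$, the mean square satisfies $\sigma\le 2M\eta$. The mean square over shifted windows also tends to $\sigma$, by the same computation. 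Letting $\eta\to0$ gives $\sigma\le0$, a contradiction. Hence $\liminf_{x\to\infty}P(x)<0$.

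\emph{Step 3 (the sequence).} Pick $\delta$ with $0<\delta<-\liminf_{x\to\infty}P(x)$. By the definition of $\liminf$ there are $x_k\to\infty$ with $P(x_k)<-\delta$.

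The only point needing care is the uniformity of the Bohr means over shifted windows $[x_\eta,x_\eta+L]$. This is immediate from the explicit formulas: the error terms are $O(1/L)$, independent of the shift.
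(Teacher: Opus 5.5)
Your proof is correct and follows essentially the same route as the paper: you combine the zero Bohr mean of $P$, the mean square $\sum_j|c_j|^2>0$, and the boundedness of $P$ in a contradiction argument. The paper fixes the explicit threshold $\delta=\sum_j|c_j|^2/(2\|P\|_\infty)$ and uses $(P+\delta)^2\le(\|P\|_\infty+\delta)(P+\delta)$, whereas you use $|P|\le P+2\eta$ and $|P|^2\le M|P|$ for arbitrary $\eta$ and then extract $\delta$ from the negative $\liminf$; your inequality $\sigma\le 2M\eta$ in fact yields an explicit threshold of the same form.
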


\begin{proof}
Since the \(r_j\) are nonzero and distinct,
\[
\lim_{T\to\infty}\frac1T\int_0^T P(x)\,dx=0.
\]
Moreover, since \(P\) is real-valued,
\[
\lim_{T\to\infty}\frac1T\int_0^T P(x)^2\,dx
=
\sum_{j=1}^N |c_j|^2.
\]
Set
\[
S=\sum_{j=1}^N |c_j|^2>0,
\qquad
\delta=\frac{S}{2\|P\|_\infty}.
\]
Suppose, to the contrary, that
\[
P(x)\ge -\delta
\]
for all sufficiently large \(x\). Then, for such \(x\),
\[
0\le P(x)+\delta\le \|P\|_\infty+\delta,
\]
and hence
\[
(P(x)+\delta)^2
\le
(\|P\|_\infty+\delta)(P(x)+\delta).
\]
Taking averages over \([0,T]\) and letting \(T\to\infty\), the
contribution from the fixed initial interval is negligible, and we obtain
\[
S+\delta^2
\le
(\|P\|_\infty+\delta)\delta
=
\frac{S}{2}+\delta^2,
\]
a contradiction. Thus \(P(x)<-\delta\) for arbitrarily large \(x\), and
therefore
\[
\liminf_{x\to\infty}P(x)\le -\delta<0.
\]
Choosing \(x_k\to\infty\) with \(P(x_k)<-\delta\) completes the proof.
\end{proof}

\begin{proposition}\label{prop:negative}
For every $\alpha>1$, there exist $\tau>0$ and $x_*>0$ such that
$K_\tau(x_*)<0$. 
\end{proposition}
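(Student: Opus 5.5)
We take $\tau$ and $B$, $s_0$ as in Proposition~\ref{lem:bifurcation} and shift the contour in \eqref{eq:Ktau} upward. For $x>0$ the factor $e^{ix\xi}$ decays in the upper half-plane, so we move the line of integration from $\R$ to $\R+iB'$, where $s_0<B'<B$ is chosen so that $D_\tau$ has no zeros on $\operatorname{Im}z=B'$. Such a $B'$ exists because there are only finitely many zeros in $0<\operatorname{Im}z\le B$. We also choose $B'$ so that the zeros strictly between $s_0$ and $B'$ all have imaginary part $s_0$; taking $B'$ slightly larger than $s_0$ does this.

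To justify the shift, we use \eqref{eq:D-growth}. It gives $|D_\tau(x+iy)|\gtrsim e^{\pi|x|}$ uniformly for $0\le y\le B$ and $|x|$ large, so the vertical sides of the rectangle contribute nothing in the limit. By the residue theorem,
\[
K_\tau(x)=i\sum_{D_\tau(z_j)=0,\ 0<\operatorname{Im}z_j<B'}\frac{e^{ixz_j}}{D_\tau'(z_j)}+\frac1{2\pi}\int_{\R}\frac{e^{ix(\xi+iB')}}{D_\tau(\xi+iB')}\,d\xi .
\]
Here all poles are simple by Proposition~\ref{lem:bifurcation}. The remainder integral is $O(e^{-B'x})$, since $1/|D_\tau(\xi+iB')|$ is integrable by the growth bound and continuity. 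The residue sum equals $e^{-s_0x}P(x)$, where
\[
P(x)=\sum_j c_je^{ir_jx},\qquad z_j=r_j+is_0,\qquad c_j=\frac{i}{D_\tau'(z_j)}\ne0 .
\]
The $r_j$ are distinct and nonzero, because no zero lies on the imaginary axis.

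Next we check that Lemma~\ref{lem:trig} applies to $P$. First, $P$ is real-valued. Indeed, $D_\tau(-\bar z)=\overline{D_\tau(z)}$, because $m$ is even with real Taylor coefficients and the same holds for $\cosh$. So the zeros come in pairs $z_j$ and $-\bar z_j$, and the corresponding terms are complex conjugates. This is consistent with $K_\tau$ being real, which holds since $D_\tau$ is even and positive on $\R$. Second, $P$ is not identically zero, because its coefficients are nonzero and its frequencies are distinct. Lemma~\ref{lem:trig} then gives $\delta>0$ and $x_k\to\infty$ with $P(x_k)<-\delta$. Therefore
\[
K_\tau(x_k)\le -\delta e^{-s_0x_k}+O(e^{-B'x_k})<0
\]
for large $k$, since $B'>s_0$. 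We take $x_*=x_k$ for such a $k$.

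The main obstacle is justifying the contour shift carefully. This needs uniform lower bounds for $|D_\tau|$ on the horizontal line $\operatorname{Im}z=B'$ and on vertical segments at large $|x|$. Both follow from \eqref{eq:complex-bound}, \eqref{eq:laplace} and $|\cosh(\pi(x+iy))|\ge|\sinh(\pi x)|$, using the fact that $B'<3/2$ avoids the zero of $\cosh(\pi z)$ at $3i/2$. A second point needing care is the choice of $B'$ so that only the lowest row of poles, at height $s_0$, dominates. Any poles with imaginary part in $(s_0,B')$ can simply be absorbed into the $O(e^{-s'x})$ error for some $s'>s_0$.
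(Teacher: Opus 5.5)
Your proposal is correct and follows the paper's proof essentially step for step. You shift the contour to a horizontal line just above the lowest row of simple poles at height $s_0$, obtain $K_\tau(x)=e^{-s_0x}P(x)+O(e^{-B'x})$, show $P$ is real-valued via the symmetry $z\mapsto-\overline z$ of $D_\tau$, and apply Lemma~\ref{lem:trig} along a sequence $x_k\to\infty$.
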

\begin{proof}
Fix the parameter $\tau$ supplied by Proposition~\ref{lem:simple} and write
$D=D_\tau$.  Let $s_0$ be the smallest positive imaginary part of zeros of
$D$ in the strip from that proposition.  Enumerate the zeros at this height
as
\[
 z_j=r_j+is_0,
 \qquad 1\le j\le N.
\]
All of them are simple, the numbers $r_j$ are distinct, and $r_j\ne0$.
Since there are only finitely many zeros in
$0\le\operatorname{Im}z\le B$, we may choose $ s_0<b<B$ 
so close to $s_0$ that there is no zero of $D$ with
$s_0<\operatorname{Im}z\le b$.

Fix $x>0$.  Apply the residue theorem to
\[
 z\longmapsto\frac{e^{ixz}}{D(z)}
\]
on the positively oriented rectangle with vertices
$-R,R,R+ib,-R+ib$, where $R$ is so large that the vertical sides contain no
zeros.  The only enclosed poles are the simple poles at $z_1,\ldots,z_N$,
and their residues are
\[
 \Res\left(\frac{e^{ixz}}{D(z)},z_j\right)
 =\frac{e^{ixz_j}}{D'(z_j)}.
\]

We now let \(R\to\infty\).  On the right vertical side, write
\[
 z=R+iy,\qquad 0\le y\le b.
\]
By \eqref{eq:D-growth}, there exist \(C>0\) and \(R_0>0\) such that
\[
 \frac1{|D(R+iy)|}\le Ce^{-\pi R},
 \qquad R\ge R_0,\quad 0\le y\le b.
\]
Moreover,
\[
 |e^{ix(R+iy)}|=e^{-xy}\le1.
\]
Therefore
\[
 \begin{aligned}
 \left|
 \int_0^b
 \frac{e^{ix(R+iy)}}{D(R+iy)}\,i\,dy
 \right|
 &\le
 \int_0^b
 \frac{e^{-xy}}{|D(R+iy)|}\,dy\\
 &\le
 Ce^{-\pi R}\int_0^b e^{-xy}\,dy\\
 &\le
 Cb\,e^{-\pi R}
 \longrightarrow0.
 \end{aligned}
\]

The same estimate holds on the left vertical side.  The integral over the left
vertical side also tends to zero as \(R\to\infty\).

On the upper horizontal line $z=\xi+ib$, there are no poles and
\eqref{eq:D-growth} gives
\[
 \frac1{D(\xi+ib)}=O(e^{-\pi|\xi|})
 \qquad(|\xi|\to\infty).
\]
Thus the upper integral is absolutely convergent.  Taking account of its
reversed orientation in the contour and then dividing the residue identity
by $2\pi$, we obtain
\[
 K_\tau(x)
 =i\sum_{j=1}^N\frac{e^{ixz_j}}{D'(z_j)}
 +\frac1{2\pi}\int_\R
   \frac{e^{ix(\xi+ib)}}{D(\xi+ib)}\,d\xi.
\]
The last integral satisfies
\[
 \left|
 \frac1{2\pi}\int_\R
   \frac{e^{ix(\xi+ib)}}{D(\xi+ib)}\,d\xi
 \right|
 \le Ce^{-bx},
\]
where
\[
 C=\frac1{2\pi}\int_\R\frac{d\xi}{|D(\xi+ib)|}<\infty.
\]
Since $z_j=r_j+is_0$, this gives the asymptotic formula
\begin{equation}\label{eq:residue}
 K_\tau(x)=e^{-s_0x}P(x)+O(e^{-bx}),
 \qquad
 P(x)=i\sum_{j=1}^N\frac{e^{ir_jx}}{D'(z_j)}.
\end{equation}

It remains to understand the sign of $P$.  Since $m$ has real Taylor
coefficients on the real axis and is even,
\[
 D(-z)=D(z),
 \qquad
 D(\overline z)=\overline{D(z)}.
\]
Differentiating and combining these identities gives $
 D'(-\overline z)=-\overline{D'(z)}$. 
Thus, whenever $z=r+is_0$ is a zero, so is
$-\overline z=-r+is_0$, and the corresponding two terms in $P$ are complex
conjugates:
\[
 \frac{i e^{irx}}{D'(r+is_0)}
 \quad\text{and}\quad
 \frac{i e^{-irx}}{D'(-r+is_0)}
 =\overline{\frac{i e^{irx}}{D'(r+is_0)}}.
\]
Hence $P(x)$ is real-valued.  The numbers $r_j$ are distinct and nonzero, and every coefficient is
nonzero since each zero $z_j$ is
simple.  Therefore $P$ is not identically zero.

By Lemma~\ref{lem:trig}, there exist \(\delta>0\) and a sequence
\(x_k\to\infty\) such that
$
P(x_k)<-\delta.
$
Evaluating \eqref{eq:residue} along this sequence gives
\[
e^{s_0x_k}K_\tau(x_k)
=
P(x_k)+O(e^{-(b-s_0)x_k}).
\]
Since \(b>s_0\), the error tends to zero. Hence, for all sufficiently
large \(k\),
\[
e^{s_0x_k}K_\tau(x_k)<-\frac{\delta}{2}<0.
\]
Thus \(K_\tau(x_k)<0\) for all sufficiently large \(k\), and the
conclusion follows.
\end{proof}

\begin{remark}
The restriction \(\alpha>1\) in Proposition~\ref{lem:bifurcation} is
essential for the argument above.  At the endpoint \(\alpha=1\),
formula \eqref{eq:gamma} becomes $ m(z)=\frac{2\cosh(\pi z)}{1+4z^2}$. 
Hence, for \(1/2<y<3/2\),
\[
 T_1(y)
 =
 -\frac{m(iy)}{2\cos(\pi y)}
 =
 \frac{1}{4y^2-1}.
\]
In particular,
\[
 T_1(y)\longrightarrow\frac18
 \qquad\text{as }y\uparrow\frac32,
\]
rather than tending to \(+\infty\).  Thus the interior-minimum argument
used in Step~1 does not apply when \(\alpha=1\). 
\end{remark}

\subsection{Convergence of the reproducing kernels}
\label{sec:kernels}

Combining \eqref{eq:global-kernel} with
\eqref{eq:fiber-kernel-limit}, we see that the integrand defining
\(K_h(x,0)\) converges, uniformly on compact subsets of
\(\mathbb R_\beta\times\mathbb R_x\), to
\[
\frac{e^{i\beta x}}{D_\tau(\beta)}.
\]
In view of \eqref{eq:Ktau}, it remains only to control the contribution
from large \(|\beta|\) uniformly in \(h\).

\begin{theorem}\label{thm:kernel-limit}
Fix \(\tau>0\). Then, for every \(X>0\),
\[
 \sup_{|x|\le X}
 |K_h(x,0)-K_\tau(x)|
 \longrightarrow0
 \qquad(h\downarrow0).
\]
\end{theorem}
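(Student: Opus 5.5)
The strategy is a dominated-convergence argument with an explicit tail splitting. By \eqref{eq:global-kernel} and \eqref{eq:Ktau},
\[
K_h(x,0)-K_\tau(x)
=\frac1{2\pi}\int_{I_h}\Bigl(k_h(\beta;x,0)-\frac{e^{i\beta x}}{D_\tau(\beta)}\Bigr)d\beta
-\frac1{2\pi}\int_{\R\setminus I_h}\frac{e^{i\beta x}}{D_\tau(\beta)}\,d\beta .
\]
We fix \(\varepsilon>0\), choose \(R>0\), and split \(I_h\) into \(|\beta|\le R\) and \(R<|\beta|\le q/2\); for small \(h\) we have \(q/2>R\).

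On \(|\beta|\le R\), the uniform convergence \eqref{eq:fiber-kernel-limit} shows that the integral over \([-R,R]\) tends to zero uniformly for \(|x|\le X\), since the interval has fixed length \(2R\). The term over \(\R\setminus I_h\) is bounded by \(\int_{|\beta|>q/2}d\beta/m(\beta)\), using \(D_\tau\ge m\), and this tends to zero because \(1/m\in L^1(\R)\). The piece \(\int_{R<|\beta|\le q/2}|e^{i\beta x}/D_\tau(\beta)|\,d\beta\) is likewise at most \(\int_{|\beta|>R}d\beta/m\), which is below \(\varepsilon\) for \(R\) large.

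The remaining, and main, issue is a bound on \(\int_{R<|\beta|\le q/2}|k_h(\beta;x,0)|\,d\beta\) that is uniform in \(h\). Contractivity of projections gives \(|k_h(\beta;x,0)|\le Q_0(\beta)=m(\beta)^{-1}+R_h(\beta)\) for real \(x\). The first term is again handled by \(\int_{|\beta|>R}1/m\). For the second term, the key point is that \(R_h\) is summed over all \(\beta\in I_h\):
\[
\int_{I_h}R_h(\beta)\,d\beta=\sum_{n\ne0}\int_{I_h+nq}\frac{d\xi}{m(\xi)}=\int_{|\xi|>q/2}\frac{d\xi}{m(\xi)}\longrightarrow0
\]
as \(h\downarrow0\), again by \(1/m\in L^1\). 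This replaces the need for the estimate \eqref{eq:Rh-estimate}, which is only uniform on compact \(\beta\)-sets. Hence
\[
\sup_{|x|\le X}\int_{R<|\beta|\le q/2}|k_h|\,d\beta\le\int_{|\beta|>R}\frac{d\beta}{m}+o(1).
\]

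Choosing \(R\) first and then letting \(h\downarrow0\) gives \(\limsup_h\sup_{|x|\le X}|K_h(x,0)-K_\tau(x)|\le C\varepsilon\), which proves the theorem. I expect this tail step to be the only delicate point, and the bound \(|k_h|\le Q_0\) together with the periodization identity for \(\int_{I_h} R_h\) should suffice.
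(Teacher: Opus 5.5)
Your proposal is correct and follows essentially the same route as the paper: split $I_h$ at $|\beta|=R$, use \eqref{eq:fiber-kernel-limit} on $[-R,R]$, and control the tail via $|k_h(\beta;x,0)|\le Q_0(\beta)$ together with a periodization argument based on $1/m\in L^1(\R)$. The only cosmetic difference is that the paper periodizes all of $Q_0$ at once on $E_{h,R}=I_h\cap\{|\beta|>R\}$ to get $\int_{E_{h,R}}Q_0\,d\beta\le\int_{|\xi|>R}d\xi/m(\xi)$, whereas you split off the $n=0$ term and bound $\int_{I_h}R_h\,d\beta=\int_{|\xi|>q/2}d\xi/m(\xi)$ separately.
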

\begin{proof}
Fix $X>0$.  For $R>0$ define
\[
 \varepsilon_h(R,X)=
 \sup_{|\beta|\le R,\ |x|\le X}
 \left|k_h(\beta;x,0)-\frac{e^{i\beta x}}{D_\tau(\beta)}\right|.
\]
By \eqref{eq:fiber-kernel-limit}, for each fixed \(R,X>0\), 
\[
 \varepsilon_h(R,X)\longrightarrow0\qquad(h\downarrow0).
\]
By \eqref{eq:fiber-kernel-limit}, the integrals over \([-R,R]\)
converge as \(h\downarrow0\).  It remains to verify that the contribution from \(|\beta|>R\) tends to zero uniformly in \(h\) as \(R\to\infty\).

Assume $q>2R$ and set $E_{h,R}=I_h\cap\{\beta:|\beta|>R\}$. 
If $\beta\in E_{h,R}$, then for $n=0$ we have $|\beta|>R$, while for
$n\ne0$,
\[
 |\beta+nq|\ge |n|q-|\beta|
 \ge q-\frac q2>R.
\]
Hence
\[
 E_{h,R}+nq\subset\{\xi:|\xi|>R\}
 \qquad(n\in\Z).
\]
The intervals $I_h+nq$ are pairwise disjoint, and
$E_{h,R}+nq\subset I_h+nq$.  Therefore the sets $E_{h,R}+nq$ are also
pairwise disjoint.  Using Fubini's theorem and the change of variable
$\xi=\beta+nq$ gives
\begin{align}
\int_{E_{h,R}}Q_0(\beta)\,d\beta
 &=\sum_{n\in\Z}\int_{E_{h,R}}
   \frac{d\beta}{m(\beta+nq)}\notag\\
 &=\sum_{n\in\Z}\int_{E_{h,R}+nq}\frac{d\xi}{m(\xi)}\notag\\
 &\le\int_{|\xi|>R}\frac{d\xi}{m(\xi)}.
 \label{eq:tail}
\end{align}
Since \(1/m\in L^1(\mathbb R)\), the right-hand side tends to zero as
\(R\to\infty\), uniformly in \(h\).

By \eqref{eq:global-kernel},
\[
 K_h(x,0)
 =
 \frac1{2\pi}\int_{I_h}k_h(\beta;x,0)\,d\beta.
\]
Together with \eqref{eq:Ktau}, when \(q>2R\) we obtain
\[
 \begin{aligned}
 2\pi\bigl(K_h(x,0)-K_\tau(x)\bigr)
 &=\int_{-R}^R
 \left(
 k_h(\beta;x,0)
 -\frac{e^{i\beta x}}{D_\tau(\beta)}
 \right)d\beta\\
 &\quad+
 \int_{E_{h,R}}k_h(\beta;x,0)\,d\beta
 -\int_{|\beta|>R}
 \frac{e^{i\beta x}}{D_\tau(\beta)}\,d\beta.
 \end{aligned}
\] 
The first term has absolute value at most
$2R\varepsilon_h(R,X)$.  For the second term,
$|k_h(\beta;x,0)|\le Q_0(\beta)$, and so \eqref{eq:tail} gives
\[
 \left|\int_{E_{h,R}}k_h(\beta;x,0)\,d\beta\right|
 \le\int_{|\xi|>R}\frac{d\xi}{m(\xi)}.
\]
For the limiting integral, $D_\tau(\xi)\ge m(\xi)$ on the real axis and
$|e^{ix\xi}|=1$, hence
\[
 \left|\int_{|\xi|>R}
 \frac{e^{ix\xi}}{D_\tau(\xi)}\,d\xi\right|
 \le\int_{|\xi|>R}\frac{d\xi}{m(\xi)}.
\]
Combining the three estimates yields
\begin{equation}\label{eq:kernel-limit-quantitative}
 \sup_{|x|\le X}|K_h(x,0)-K_\tau(x)|
 \le\frac1{2\pi}
 \left(
 2R\varepsilon_h(R,X)
 +2\int_{|\xi|>R}\frac{d\xi}{m(\xi)}
 \right).
\end{equation}
Given $\varepsilon>0$, first choose $R$ so large that the second term on
the right of \eqref{eq:kernel-limit-quantitative} is less than
$\varepsilon/2$.  With this $R$ fixed, choose $h$ sufficiently small so
that $q>2R$ and the first term is less than $\varepsilon/2$.  This proves
the required uniform convergence on $[-X,X]$.
\end{proof}

\begin{corollary}\label{cor:negative-discrete}
Let \(\tau>0\) and \(x_*>0\) be chosen as in
Proposition~\ref{prop:negative}.  Then $ K_h(x_*,0)<0$ 
for all sufficiently small \(h>0\).
\end{corollary}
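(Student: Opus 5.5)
This follows by combining Proposition~\ref{prop:negative} with Theorem~\ref{thm:kernel-limit}. Fix the parameter \(\tau>0\) and the point \(x_*>0\) from Proposition~\ref{prop:negative}, so that
\[
\eta:=-K_\tau(x_*)>0 .
\]
The parameter \(\tau\) in the scaling \eqref{eq:scaling} is this same \(\tau\). Hence the subspaces \(\Mh\) and their kernels \(K_h\) are defined with this choice, and the limit function appearing in Theorem~\ref{thm:kernel-limit} is exactly \(K_\tau\).

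Apply Theorem~\ref{thm:kernel-limit} with \(X=x_*\). It gives
\[
\sup_{|x|\le x_*}|K_h(x,0)-K_\tau(x)|\to0 \qquad (h\downarrow0).
\]
In particular, there is \(h_0>0\) such that for all \(0<h<h_0\) we have \(|K_h(x_*,0)-K_\tau(x_*)|<\eta/2\). This \(h_0\) is also small enough for the definitions in \eqref{eq:scaling} to make sense, namely \(0<T_h<\pi/2\).

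For such \(h\), we recall that \(K_h(x_*,0)\) is real, by Proposition~\ref{lem:real}(iii), since \(\Lambda_h\) is conjugation invariant. Therefore
\[
K_h(x_*,0)<K_\tau(x_*)+\eta/2=-\eta/2<0 .
\]

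There is no real obstacle here. The only point to check is bookkeeping: the \(\tau\) used to build \(\Lambda_h\) must be the same \(\tau\) produced by Proposition~\ref{prop:negative}. Pointwise convergence at the single point \(x_*\), which is contained in the uniform statement, is all that is needed.
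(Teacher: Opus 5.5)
Your proposal is correct and matches the paper's proof: combine \(K_\tau(x_*)<0\) from Proposition~\ref{prop:negative} with the convergence \(K_h(x_*,0)\to K_\tau(x_*)\) given by Theorem~\ref{thm:kernel-limit}. Your explicit \(\eta/2\) margin, the remark that \(K_h(x_*,0)\) is real, and the check that the same \(\tau\) enters the scaling only spell out details the paper leaves implicit.
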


\begin{proof}
By Proposition~\ref{prop:negative},
\[
K_\tau(x_*)<0.
\]
Since, by Theorem~\ref{thm:kernel-limit},
\[
K_h(x_*,0)\longrightarrow K_\tau(x_*)
\qquad (h\downarrow0),
\]
it follows that
\[
K_h(x_*,0)<0
\]
for all sufficiently small \(h>0\).
\end{proof}
\subsection{An infinite zero-based counterexample}

\begin{proposition}\label{prop:infinite-counterexample}
For every \(\alpha>1\), there exists an infinite set
\(A_\infty\subset\D\), invariant under complex conjugation and
containing no real points, such that \(I_{A_\infty}\) fails the
wandering subspace property.
\end{proposition}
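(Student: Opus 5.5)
Fix \(\alpha>1\), and take \(\tau>0\) and \(x_*>0\) as in Proposition~\ref{prop:negative}. By Corollary~\ref{cor:negative-discrete}, we can fix \(h>0\) so small that \(K_h(x_*,0)<0\). Set
\[
A_\infty=\psi(\Lambda_h)=\{\tanh((kh\pm iT_h)/2):k\in\Z\}.
\]
This set is infinite, since \(\psi\) is injective on \(S\). It is invariant under complex conjugation, because \(\Lambda_h\) is and \(\psi(\overline u)=\overline{\psi(u)}\). It contains no real points, because \(\Lambda_h\cap\R=\emptyset\) (as \(T_h>0\)) and \(\psi^{-1}(\D\cap\R)=\R\).

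By Proposition~\ref{lem:unitary}(ii), the unitary \(U\) maps \(I_{A_\infty}\) onto \(\Mh\), since \(Uf\) vanishes on \(\Lambda_h\) exactly when \(f\) vanishes on \(\psi(\Lambda_h)\). Proposition~\ref{lem:real}(iii) then gives \(\operatorname{sgn}K_{I_{A_\infty}}(\psi(x),0)=\operatorname{sgn}K_h(x,0)\) for real \(x\). Hence \(K_{I_{A_\infty}}(\psi(x_*),0)<0\).

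Next I would use a sign-change argument on the real segment. By Lemma~\ref{lem:extra}, \(K_{I_{A_\infty}}(0,0)>0\); this lemma applies because \(0\notin A_\infty\) and \(I_{A_\infty}\ne\{0\}\) by Lemma~\ref{lem 3.1}. The function \(r\mapsto K_{I_{A_\infty}}(r,0)\) is real and continuous on \([0,\psi(x_*)]\subset(0,1)\cup\{0\}\). By the intermediate value theorem it therefore vanishes at some \(c\in(0,\psi(x_*))\). The point \(c\) is real, so \(c\notin A_\infty\).

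The remaining issue is the hypothesis in Lemma~\ref{lem:extra} that \(c\) is not a common zero of \(I_{A_\infty}\). Lemma~\ref{lem 3.1} shows that the zero set of \(\Mh\) is exactly \(\Lambda_h\): the function \(F_h\) vanishes only there. Transporting by \(U\), whose multiplier \(J_\alpha\) is zero-free, the common zero set of \(I_{A_\infty}\) is exactly \(A_\infty\). Concretely, \(f=U^{-1}F_h\in I_{A_\infty}\) satisfies \(f(c)\ne0\). Lemma~\ref{lem:extra} then yields \(I_{A_\infty}\ne[I_{A_\infty}\ominus zI_{A_\infty}]_{T_z}\).

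No step here is a real obstacle, since the analytic work is contained in Corollary~\ref{cor:negative-discrete}. The only points needing care are bookkeeping: \(\psi\) commutes with conjugation and fixes \(0\), so the sign transfer is valid, and the common zero set is identified exactly, so that \(c\) is a genuine extraneous zero.
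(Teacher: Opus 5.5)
Your proof is correct and follows essentially the same route as the paper: you choose $h$ with $K_h(x_*,0)<0$, set $A_\infty=\psi(\Lambda_h)$, use the intermediate value theorem together with $F_h$ to produce a real extraneous zero $c$ that is not a common zero of $I_{A_\infty}$, and conclude via Lemma~\ref{lem:extra}. The only cosmetic difference is the order of steps: you transfer the sign to the disk first and apply the intermediate value theorem to $K_{I_{A_\infty}}(\cdot,0)$ on $[0,\psi(x_*)]$, as the paper itself does in the proof of Theorem~\ref{thm:main}, whereas the paper's proof of this proposition applies it to $K_h(\cdot,0)$ on $[0,x_*]$ in the strip and then transports the zero to the disk.
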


\begin{proof}
Fix \(\tau>0\) and \(x_*>0\) as in
Proposition~\ref{prop:negative}. By
Corollary~\ref{cor:negative-discrete}, choose \(h>0\) so small that
\[
K_h(x_*,0)<0.
\]
Set
$
A_\infty=\psi(\Lambda_h).
$
Since \(\psi\) is injective and satisfies $
\psi(\overline u)=\overline{\psi(u)}$, 
the set \(A_\infty\) is infinite and invariant under complex
conjugation. Moreover, for \(u=x+it\),
\[
\operatorname{Im}\psi(u)
=
\frac{\sin t}{\cosh x+\cos t}.
\]
As every point of \(\Lambda_h\) has imaginary part \(\pm T_h\), with
\(0<T_h<\pi/2\), the set \(A_\infty\) contains no real points. In
particular, \(0\notin A_\infty\).

Since \(J_\alpha\) is zero-free on \(S\),
Proposition~\ref{lem:unitary} gives $
 U^{-1}\mathcal M_h=I_{A_\infty}$.  
We next show that evaluation at every real point is nonzero on
\(\mathcal M_h\). 

For every \(k\in\Z\),
\[
 F_h(kh+iT_h)=F_h(kh-iT_h)=0,
\]
hence \(F_h\in\mathcal M_h\). On the other hand, if \(x\in\R\), then
\[
 \begin{aligned}
 &\sin\frac{\pi(x-iT_h)}{h}
 \sin\frac{\pi(x+iT_h)}{h} =
 \left|
 \sin\frac{\pi(x+iT_h)}{h}
 \right|^2\\
 &\qquad=
 \sin^2\frac{\pi x}{h}
 +
 \sinh^2\frac{\pi T_h}{h}
 >0.
 \end{aligned}
\]
Since \(\sech^2(x/2)>0\), it follows that
\[
 F_h(x)\ne0,
 \qquad x\in\R.
\]

In particular, evaluation at \(0\) is nonzero on \(\mathcal M_h\), hence
\[
K_h(0,0)
=
\|K_h(\cdot,0)\|_{\Hh}^2
>0.
\]
By Proposition~\ref{lem:real}, the function \(K_h(\cdot,0)\) is
real-valued on \(\mathbb R\). Since
\[
K_h(0,0)>0
\qquad\text{and}\qquad
K_h(x_*,0)<0,
\]
the intermediate value theorem yields a point
\(x_0\in(0,x_*)\) such that $
 K_h(x_0,0)=0$.

Set $
 c=\psi(x_0)\in(0,1)$. 
Since \(U^{-1}\mathcal M_h=I_{A_\infty}\), the kernel transformation
formula gives
\[
 K_h(x_0,0)
 =
 J_\alpha(x_0)J_\alpha(0)
 K_{A_\infty}(c,0).
\]
As \(J_\alpha(x_0)J_\alpha(0)>0\), we obtain
\[
 K_{A_\infty}(c,0)=0.
\]
Moreover, \(c\notin A_\infty\), since \(c\) is real whereas
\(A_\infty\) contains no real point.

Finally, since
\[
 (U^{-1}F_h)(c)
 =
 \frac{F_h(x_0)}{J_\alpha(x_0)}
 \ne0,
\]
the point \(c\) is not a common zero of \(I_{A_\infty}\).
Together with $
 K_{A_\infty}(c,0)=0$, 
Lemma~\ref{lem:extra} shows that \(I_{A_\infty}\) fails the wandering
subspace property.
\end{proof}

\begin{proof}[Proof of Corollary~\ref{cor:critical-range}]
For \(-1<\alpha\le1\), the assertion follows from
\cite[Theorem~1.4]{Shimorin}. For \(\alpha>1\), Proposition~\ref{prop:infinite-counterexample}
shows that the Beurling-type theorem fails.
\end{proof}

\section{Finite truncation and proof of the main theorem}
\label{sec:finite}
\label{app:limits}

Proposition~\ref{prop:infinite-counterexample} already shows that the
Beurling-type theorem fails for every \(\alpha>1\). The purpose
of this section is to strengthen this conclusion by replacing the
infinite periodic zero set by a finite one. This finite strengthening
is also needed for the application to the finite Blaschke-product
construction of Hedenmalm and Perdomo.

We shall use the following elementary projection identity for reproducing
kernels.  It is standard in RKHS theory and goes back to Aronszajn
\cite{Aronszajn}; see also \cite[Theorem~2.5]{PR}.

\begin{lemma}\label{lem:rkhs-projection}
Let \(H\) be a reproducing kernel Hilbert space with kernel \(K\), let
\(M\subset H\) be a closed subspace, and let \(P_M\) be the orthogonal projection onto
\(M\). Then
\begin{equation}\label{eq:rkhs-projection}
 K_M(\cdot,w)=P_MK(\cdot,w),
 \qquad
 K_M(z,w)=\langle P_MK(\cdot,w),K(\cdot,z)\rangle_H.
\end{equation}
Consequently,
\[
 K_M(w,w)\le K(w,w),
 \qquad
 |K_M(z,w)|\le \sqrt{K(z,z)K(w,w)}.
\]
\end{lemma}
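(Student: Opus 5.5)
The plan is to derive everything from the Riesz representation of point evaluations restricted to \(M\). Fix \(w\) and consider the functional \(f\mapsto f(w)\) on \(M\). For \(f\in M\) we have \(f=P_Mf\), so
\[
f(w)=\langle f,K(\cdot,w)\rangle_H=\langle P_Mf,K(\cdot,w)\rangle_H=\langle f,P_MK(\cdot,w)\rangle_H ,
\]
because \(P_M\) is self-adjoint. The vector \(P_MK(\cdot,w)\) lies in \(M\) and represents evaluation at \(w\) on \(M\). By uniqueness of the Riesz representer in the Hilbert space \(M\), it coincides with \(K_M(\cdot,w)\), which gives the first identity in \eqref{eq:rkhs-projection}.

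For the second identity, evaluate \(K_M(\cdot,w)\in H\) at \(z\) using the reproducing property of \(H\):
\[
K_M(z,w)=\langle K_M(\cdot,w),K(\cdot,z)\rangle_H=\langle P_MK(\cdot,w),K(\cdot,z)\rangle_H .
\]
This is the formula claimed.

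For the consequences, first write \(K_M(w,w)=\langle P_MK(\cdot,w),K(\cdot,w)\rangle=\|P_MK(\cdot,w)\|^2\), using \(P_M=P_M^*P_M\). Since \(P_M\) is a contraction, this is at most \(\|K(\cdot,w)\|^2=K(w,w)\). Next, \(K_M(z,w)=\langle P_MK(\cdot,w),P_MK(\cdot,z)\rangle\). The Cauchy--Schwarz inequality then gives
\[
|K_M(z,w)|\le\sqrt{K_M(z,z)K_M(w,w)}\le\sqrt{K(z,z)K(w,w)}.
\]

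There is no real obstacle here. The only point needing care is that \(P_MK(\cdot,w)\) genuinely belongs to \(M\), which holds because \(P_M\) maps into \(M\). Uniqueness of the representer then closes the argument.
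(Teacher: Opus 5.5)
Your proof is correct: identifying $P_MK(\cdot,w)$ as the Riesz representer of evaluation at $w$ on $M$, then using the reproducing property of $H$, the identity $P_M=P_M^*P_M$, and Cauchy--Schwarz gives all four assertions. The paper gives no proof of this lemma and simply cites it as standard (Aronszajn; Paulsen--Raghupathi, Theorem~2.5), and your argument is the standard one found in those references.
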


We   record a standard fact about decreasing closed subspaces.

\begin{lemma}\label{lem:projections}
Let $ M_1\supset M_2\supset\cdots$ 
be closed subspaces of a Hilbert space \(H\), and let \(P_N\) denote
the orthogonal projection onto \(M_N\).  Put
\[
 M_\infty=\bigcap_{N\ge1}M_N.
\]
Then, for every \(f\in H\),
\[
 \|P_Nf-P_{M_\infty}f\|_H\longrightarrow0
 \qquad(N\to\infty).
\]
If \(H\) is an RKHS, then $
 K_{M_N}(z,w)\longrightarrow K_{M_\infty}(z,w)$ 
for every fixed \(z,w\).
\end{lemma}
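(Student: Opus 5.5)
The plan is to prove that the projections form a monotone decreasing family and converge strongly to $P_{M_\infty}$. The kernel statement then follows from Lemma~\ref{lem:rkhs-projection}.

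First, since $M_{N+1}\subset M_N$, we have $P_NP_{N+1}=P_{N+1}P_N=P_{N+1}$. Fix $f\in H$. The vectors $P_Nf$ then satisfy
\[
\|P_Nf\|^2=\|P_{N+1}f\|^2+\|(P_N-P_{N+1})f\|^2 .
\]
Hence $\|P_Nf\|^2$ is nonincreasing and bounded below, so it converges. For $N<N'$ we have $P_Nf-P_{N'}f=(P_N-P_{N'})f$, and $P_N-P_{N'}$ is the projection onto $M_N\ominus M_{N'}$. This gives
\[
\|P_Nf-P_{N'}f\|^2=\|P_Nf\|^2-\|P_{N'}f\|^2\to0 .
\]
So $(P_Nf)$ is Cauchy and has a limit $g\in H$.

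Next we identify $g$. For each fixed $N_0$, all $P_Nf$ with $N\ge N_0$ lie in the closed subspace $M_{N_0}$, so $g\in M_{N_0}$. Hence $g\in M_\infty$. Moreover $f-P_Nf\perp M_N\supset M_\infty$ for every $N$. Passing to the limit, $f-g\perp M_\infty$. These two properties characterize the orthogonal projection, so $g=P_{M_\infty}f$. This proves the first claim.

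For the RKHS part, Lemma~\ref{lem:rkhs-projection} gives
\[
K_{M_N}(z,w)=\langle P_NK(\cdot,w),K(\cdot,z)\rangle_H .
\]
Apply the strong convergence to $f=K(\cdot,w)$, and use continuity of the inner product against the fixed vector $K(\cdot,z)$. This yields convergence to $\langle P_{M_\infty}K(\cdot,w),K(\cdot,z)\rangle_H=K_{M_\infty}(z,w)$. No step here is a real obstacle; the only point needing care is the Pythagorean identity that makes $(P_Nf)$ Cauchy.
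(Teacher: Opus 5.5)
Your proof is correct and follows essentially the same route as the paper: the identity $\|P_Nf-P_{N'}f\|^2=\|P_Nf\|^2-\|P_{N'}f\|^2$ makes $(P_Nf)$ Cauchy. The limit is then identified as $P_{M_\infty}f$ via membership in each $M_{N_0}$ and orthogonality of $f-g$ to $M_\infty$, and the kernel convergence follows from Lemma~\ref{lem:rkhs-projection} applied to $f=K(\cdot,w)$.
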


\begin{proof}
Fix \(f\in H\).  If \(m\ge n\), then
\(M_m\subset M_n\), and hence $
 P_mP_n=P_m$. 
Since \(P_nf-P_mf\in M_n\) and
\(f-P_nf\perp M_n\), we have
\[
 \|f-P_mf\|_H^2
 =
 \|f-P_nf\|_H^2+\|P_nf-P_mf\|_H^2.
\]
Equivalently,
\[
 \|P_nf-P_mf\|_H^2
 =
 \|P_nf\|_H^2-\|P_mf\|_H^2.
\]
The sequence \(\|P_Nf\|_H\) is decreasing and bounded below by \(0\).
Therefore \((P_Nf)\) is Cauchy in \(H\).  Let $
 P_Nf\longrightarrow g$.

For every fixed \(N\), we have \(P_kf\in M_N\) whenever \(k\ge N\).
Since \(M_N\) is closed, \(g\in M_N\).  Hence
\[
 g\in\bigcap_{N\ge1}M_N=M_\infty.
\]
Moreover, if \(u\in M_\infty\), then \(u\in M_N\) for every \(N\), hence $
 \langle f-P_Nf,u\rangle_H=0$.
It follows that
\[
 \langle f-g,u\rangle_H=0,
 \qquad u\in M_\infty.
\]
Thus \(g=P_{M_\infty}f\), proving the first assertion.

If \(H\) is an RKHS, Lemma~\ref{lem:rkhs-projection} gives
\[
 K_{M_N}(z,w)
 =
 \left\langle
 P_NK(\cdot,w),K(\cdot,z)
 \right\rangle_H.
\]
Since $
 P_NK(\cdot,w)
 \longrightarrow
 P_{M_\infty}K(\cdot,w)$ 
in \(H\), it follows that $
 K_{M_N}(z,w)
 \longrightarrow
 K_{M_\infty}(z,w)$. 
\end{proof}

Fix \(\tau\) and \(x_*\) as in Proposition~\ref{prop:negative}, and
choose \(h>0\) sufficiently small so that
\[
 K_h(x_*,0)<0,
\]
as guaranteed by Corollary~\ref{cor:negative-discrete}.

For \(N\ge1\), define
\[
 \Lambda_{h,N}
 =
 \{kh+iT_h:|k|\le N\}
 \cup
 \{kh-iT_h:|k|\le N\},
\]
and
\[
 \mathcal M_{h,N}
 =
 \{F\in\Hh:
 F(\lambda)=0
 \text{ for every }\lambda\in\Lambda_{h,N}\}.
\]
Each \(\mathcal M_{h,N}\) is a closed subspace of \(\Hh\), and
\[
 \mathcal M_{h,1}\supset
 \mathcal M_{h,2}\supset\cdots.
\]
Since
\[
 \bigcup_{N\ge1}\Lambda_{h,N}=\Lambda_h,
\]
we have
\[
 \bigcap_{N\ge1}\mathcal M_{h,N}=\Mh.
\]

Let \(K_{h,N}\) denote the reproducing kernel of
\(\mathcal M_{h,N}\).  By Lemma~\ref{lem:projections},
\[
 K_{h,N}(z,w)\longrightarrow K_h(z,w),
 \qquad z,w\in S.
\]
In particular,
\[
 K_{h,N}(x_*,0)\longrightarrow K_h(x_*,0)<0.
\]

Since each \(\Lambda_{h,N}\) is invariant under complex conjugation,
Proposition~\ref{lem:real} shows that
\[
 K_{h,N}(x_*,0)\in\R.
\]
Hence
\[
 K_{h,N}(x_*,0)<0
\]
for all sufficiently large \(N\). 
Fix such an \(N\).

\begin{proof}[Proof of Theorem~\ref{thm:main}]
Set
$
A=\psi(\Lambda_{h,N})\subset\D.
$
Since \(\psi:S\to\D\) is conformal and
$
\psi(\overline u)=\overline{\psi(u)},
$
the set \(A\) is finite, consists of distinct points, and is invariant
under complex conjugation. Moreover, for \(u=x+it\),
\[
\operatorname{Im}\psi(u)
=
\frac{\sin t}{\cosh x+\cos t}.
\]
Every point of \(\Lambda_{h,N}\) has imaginary part \(\pm T_h\), where
\(0<T_h<\pi/2\). Hence \(A\cap\mathbb R=\varnothing\), and in
particular \(0\notin A\).

By Proposition~\ref{lem:unitary},
\[
(Uf)(u)=J_\alpha(u)f(\psi(u)),
\]
where \(J_\alpha\) is zero-free on \(S\). It follows immediately that
\[
U^{-1}\mathcal M_{h,N}=I_A.
\]

Let
\[
b_*=\psi(x_*)=\tanh(x_*/2)\in(0,1).
\]
Applying \eqref{eq:kernel-transform} at \(u=x_*\) and \(v=0\), we obtain
\[
K_{h,N}(x_*,0)
=
J_\alpha(x_*)J_\alpha(0)K_A(b_*,0).
\]
Since \(J_\alpha(x_*)J_\alpha(0)>0\), the inequality
$
K_{h,N}(x_*,0)<0
$
implies
\[
K_A(b_*,0)<0.
\]

Now set
\[
p_A(z)=\prod_{a\in A}(z-a).
\]
Then \(p_A\in I_A\), and, since \(A\cap\mathbb R=\varnothing\),
\[
p_A(t)\ne0,
\qquad t\in\mathbb R.
\]
In particular, evaluation at \(0\) is nonzero on \(I_A\), and therefore
\[
K_A(0,0)>0.
\]

Since \(A\) is invariant under complex conjugation,
Proposition~\ref{lem:real} yields
\[
K_A(t,0)\in\mathbb R,
\qquad t\in[0,b_*].
\]
The function \(K_A(\cdot,0)\) is holomorphic, hence continuous, and
\[
K_A(0,0)>0,
\qquad
K_A(b_*,0)<0.
\]
Thus there exists \(c\in(0,b_*)\) such that
\[
K_A(c,0)=0.
\]
Since \(c\in\mathbb R\), we have \(p_A(c)\ne0\), thus \(c\) is not a
common zero of \(I_A\). Lemma~\ref{lem:extra} therefore implies that
\(I_A\) fails the wandering subspace property.
\end{proof}

\begin{proof}[Proof of Corollary~\ref{cor:HP}]
Fix \(\alpha>1\). By Theorem~\ref{thm:main}, there exist a finite set
\(A\subset\mathbb D\) and \(c\in\mathbb D\setminus A\) such that
\[
 K_A(c,0)=0.
\]
Let \(B_A\) be the finite Blaschke product with zero set \(A\), and set
\[
 \omega_{\alpha,A}(z)
 =
 (\alpha+1)(1-|z|^2)^\alpha |B_A(z)|^2.
\]
By \cite[Proposition~4.1]{HP},
\[
 K_A(z,w)
 =
 B_A(z)\overline{B_A(w)}
 K_{\omega_{\alpha,A}}(z,w).
\]
Since \(B_A(c)B_A(0)\ne0\),
\[
 K_{\omega_{\alpha,A}}(c,0)=0.
\]

We now invoke the regularization argument of
\cite[Section~4]{HP}. With all parameters \(\rho_k=2\), the weight
\(\omega_{\alpha,A}\), up to a positive constant,
is precisely the limiting weight considered there. Hedenmalm and
Perdomo approximate the point masses by positive \(C^\infty\)-functions,
introduce a slight radial dilation of the hyperbolic density, and then
pass to the limiting weight by their ``limit process argument.''

Suppose, to the contrary, that the optimization problem admits a
smooth positive solution for every smooth curvature form
\(\boldsymbol{\mu}=\mu\,dA\) satisfying
\[
\boldsymbol{\mu}
+\frac{\alpha}{2}\boldsymbol K_{\mathbb H}\le0.
\]
For each regularized smooth datum, the Bergman kernel representation of
the extremal metric implies that the corresponding weighted Bergman
kernel is zero-free. The limit process argument of
\cite[Section~4]{HP} would then imply that the reproducing kernel for
the limiting weight \(\omega_{\alpha,A}\) is also zero-free. This
contradicts
\[
K_{\omega_{\alpha,A}}(c,0)=0.
\]

Consequently, there exists a smooth curvature form
\(\boldsymbol{\mu}=\mu\,dA\) satisfying
\[
\boldsymbol{\mu}
+\frac{\alpha}{2}\boldsymbol K_{\mathbb H}\le0
\]
for which the associated optimization problem admits no smooth positive
solution.
\end{proof}

\subsection*{Conflict of interest}
The authors declare no conflicts of interest relevant to the content of
this article.

\subsection*{Data availability statement}
The arguments in this article are analytic. No research datasets are
required for the proofs.

\subsection*{AI Statement}
The authors used artificial-intelligence tools to assist with exposition,
\LaTeX\ preparation, and the development and checking of some local arguments.
 All mathematical statements, proofs, and conclusions were
independently checked by the authors, who take full responsibility for the
content of the paper.

\bibliographystyle{amsplain}
\bibliography{references}

\medskip

\noindent
School of Mathematical Sciences, Dalian University of Technology,
Dalian, Liaoning 116024, P. R. China

\noindent
Email address: \texttt{linzhaopeng2606@163.com} (Zhaopeng Lin)

\noindent
School of Mathematical Sciences, Dalian University of Technology,
Dalian, Liaoning 116024, P. R. China

\noindent
Email address: \texttt{shiboxu98@163.com} (Shibo Xu)

\noindent
School of Mathematical Sciences, Dalian University of Technology,
Dalian, Liaoning 116024, P. R. China

\noindent
Email address: \texttt{tyu@dlut.edu.cn} (Tao Yu)

\end{document}